\newtheorem{theorem}{Theorem}[section]
\newtheorem{definition}[theorem]{Definition}
\newtheorem{lemma}[theorem]{Lemma}
\newtheorem{proposition}[theorem]{Proposition}
\newtheorem{corollary}[theorem]{Corollary}
\theoremstyle{definition}
\newtheorem{example}{Example}
\definecolor{fuchsia}{rgb}{1.0, 0.0, 1.0} % pink
\begin{document}
\title{Completely Positive Factorization by a Riemannian Smoothing Method
%\footnote{This paper was previously titled ``Completely Positive Factorization in Orthogonality Optimization via Smoothing Method."}
}
\author{
	ZHIJIAN LAI\thanks{
		Graduate School of Systems and Information Engineering, University of Tsukuba, Tsukuba, Ibaraki 305-8573, Japan. 
		E-mail:\href{mailto:s2130117@s.tsukuba.ac.jp}{s2130117@s.tsukuba.ac.jp}
	}
	\and AKIKO YOSHISE\thanks{
		Corresponding author. 
		Faculty of Engineering, Information and Systems, University of Tsukuba, Tsukuba, Ibaraki 305-8573, Japan. 
		E-mail:\href{mailto:yoshise@sk.tsukuba.ac.jp}{yoshise@sk.tsukuba.ac.jp}
	}
}
\date{}
\maketitle

\begin{abstract}
Copositive optimization is a special case of convex conic programming, and it consists of optimizing a linear function over the cone of all completely positive matrices under  linear constraints. 
Copositive optimization provides powerful relaxations of NP-hard quadratic problems or combinatorial problems, but there are still many open problems regarding copositive or completely positive matrices.
In this paper, we focus on one such problem; finding a completely positive (CP) factorization for a given completely positive matrix.
We treat it as a nonsmooth Riemannian optimization problem, i.e., a minimization problem of a nonsmooth function over a Riemannian manifold.
To solve this problem, we present a general smoothing framework for solving nonsmooth Riemannian optimization problems and show convergence to a stationary point of the original problem. 
An advantage is that we can implement it quickly with minimal effort by directly using the existing standard smooth Riemannian solvers, such as Manopt.
Numerical experiments show the efficiency of our method especially for large-scale CP factorizations.

\smallskip
\textbf{Keywords:}
completely positive factorization, smoothing method, nonsmooth Riemannian optimization problem

\smallskip
\textbf{AMS:}
15A23, 15B48, 90C48, 90C59
\end{abstract}

\section{Introduction}\label{sec1}
The space of $n \times n$ real symmetric matrices $\mathcal{S}_{n}$ is endowed with the trace inner product $\langle A, B\rangle:=\operatorname{trace}(A B)$.
A matrix $ A \in \mathcal{S}_{n}$ is called \emph{completely positive} if for some $r \in \mathbb{N}$ 
there exists an entrywise nonnegative matrix $B \in \mathbb{R}^{n \times r}$ such that $ A=B B^{\top}$, and
we call $B$ a \emph{CP factorization} of $A$.
We define $\mathcal{CP}_{n}$ as the set of $n \times n$ completely positive matrices, equivalently characterized as
\begin{equation*}
	\mathcal{CP}_{n}
	:=\{B B^{\top} \in \mathcal{S}_{n} \mid B \text { is a nonnegative matrix }\}
	=\operatorname{conv}\{x x^{\top} \mid x \in \mathbb{R}_{+}^{n}\},
\end{equation*}
where $\operatorname{conv}(S)$ denotes the convex hull of a given set $S$.
We denote the set of $n \times n$ copositive matrices by
$
\mathcal{COP}_{n}:=\{A \in \mathcal{S}_{n} \mid x^{\top} A x \geq 0 \text { for all } x \in \mathbb{R}_{+}^{n}\}.
$
{It is known that $ \mathcal{COP}_n $ and $\mathcal{CP}_{n}$ are duals of each other under the trace inner product; moreover, both $\mathcal{CP}_{n}$ and $\mathcal{COP}_{n}$ are proper convex cones \cite[Section 2.2]{berman2003completely}.} 
For any positive integer $n$, we have the following inclusion relationship among other important cones in conic optimization:
\begin{equation*}
	\mathcal{CP}_{n} \subseteq
	\mathcal{S}_{n}^{+} \cap \mathcal{N}_{n} \subseteq
	\mathcal{S}_{n}^{+} \subseteq 
	\mathcal{S}_{n}^{+}+\mathcal{N}_{n} \subseteq
	\mathcal{COP}_{n},
\end{equation*}
where $ \mathcal{S}_{n}^{+} $ is the cone of $n \times n$ symmetric positive semidefinite matrices and $\mathcal{N}_{n}$ is the cone of $n \times n$ symmetric nonnegative matrices.
See the monograph \cite{berman2003completely} for a comprehensive description of $\mathcal{CP}_{n}$ and $\mathcal{COP}_{n}$.

Conic optimization is a subfield of convex optimization that studies minimization of linear functions over proper cones.
Here, if the proper cone is $\mathcal{CP}_n$ or its dual cone $\mathcal{COP}_n$, we call the conic optimization problem a \emph{copositive programming} problem.
Copositive programming is closely related to many nonconvex, NP-hard quadratic and combinatorial optimizations \cite{dur2021conic}. 
For example, consider the so-called standard quadratic optimization problem, 
\begin{equation}\label{sqo}
	\min \{x^{\top} M x \mid  \textbf{e}^{\top} x=1,
	x \in \mathbb{R}_{+}^{n} \},
\end{equation}
where $M \in \mathcal{S}_{n}$ is possibly not positive semidefinite and $\textbf{e}$ is the all-ones vector.
Bomze et al. \cite{bomze2000copositive} showed that the following completely positive reformulation,
\begin{equation*}
	\min \{\langle M, X\rangle \mid \langle E, X\rangle=1,
	X \in \mathcal{C} \mathcal{P}_{n} \},
\end{equation*}
where $E$ is the all-ones matrix, is equivalent to (\ref{sqo}).
Burer \cite{burer2009copositive} reported a more general result, where any quadratic problem with binary and continuous variables can be rewritten as a linear program over $ \mathcal{CP}_n $. 
As an application to combinatorial problems, consider the problem of computing the independence number $\alpha (G)$ of a graph $G$ with $n$ nodes.
De Klerk and Pasechnik \cite{de2002approximation} showed that
\begin{equation*}
	\alpha (G)=\max \{\langle E, X\rangle \mid\langle A+I, X\rangle=1, X \in \mathcal{C} \mathcal{P}_{n}\},
\end{equation*}
where $A$ is the adjacency matrix of $G$.
For surveys on applications of copositive programming, see \cite{bomze2012copositive,bomze2012think,burer2015gentle,dur2010copositive,dur2021conic}.

The difficulty of the above problems lies entirely in the completely positive conic constraint. Note that because neither $ \mathcal{COP}_{n} $ nor $ \mathcal{CP}_{n} $ is self-dual, the primal-dual interior point method for conic optimization does not work as is. 
Besides this, there are many open problems related to completely positive cones. 
One is checking membership in $\mathcal{CP}_{n}$, which was shown to be NP-hard by \cite{dickinson2014computational}. Computing or estimating the cp-rank, as defined later in (\ref{cprank}), is also an open problem. We refer the reader to \cite{berman2015open,dur2010copositive} for a detailed discussion of those unresolved issues.

In this paper, we focus on finding a CP factorization for a given $A \in \mathcal{CP}_{n}$, i.e., the \emph{CP factorization problem}:
\begin{equation}\tag{CPfact}\label{CPfact}
	\text { Find } B \in \mathbb{R}^{n \times r} \text { s.t. } A=B B^{\top} \text { and } B \geq 0,
\end{equation}
which seems to be closely related to the membership problem $A \in \mathcal{CP}_n$.
Sometimes, a matrix is shown to be completely positive through duality, or rather, $\langle A, X\rangle \geq 0$ for all $X \in \mathcal{COP}_n$, but in this case, a CP factorization will not necessarily be obtained.

\subsection{Related work on CP factorization}\label{subs:rw}

Various methods of solving CP factorization problems have been studied.
Jarre and Schmallowsky \cite{jarre2009computation} stated a criterion for complete positivity, based on the augmented primal dual method to solve a particular second-order cone problem.
Dickinson and D{\"u}r \cite{dickinson2012linear} 
dealt with complete positivity of matrices that possess a specific sparsity pattern and proposed a method for finding CP factorizations of these special matrices that can be performed in linear time.
Nie \cite{nie2014mathcal} formulated the CP factorization problem as an $\mathcal{A}$-truncated $K$-moment problem, for which the author developed an algorithm that solves a series of semidefinite optimization problems.
Sponsel and D{\"u}r \cite{sponsel2014factorization} considered the problem of projecting a matrix onto $\mathcal{CP}_{n}$ and $\mathcal{COP}_{n}$ by using polyhedral approximations of these cones.
With the help of these projections, they devised a method to compute a CP factorization for any matrix in the interior of $\mathcal{CP}_{n}$. 
Bomze \cite{bomze2018building} showed how to construct a CP factorization of an $n \times n$ matrix based on a given CP factorization of an  $(n-1) \times (n-1)$ principal submatrix.
Dutour Sikiri\'{c} et al. \cite{sikiric2020simplex} developed a simplex-like method for a rational CP factorization that works if the input matrix allows a rational CP factorization.

In 2020, Groetzner and D{\"u}r \cite{groetzner2020factorization} applied the alternating projection method to the CP factorization problem by posing it as an equivalent feasibility problem (see (\ref{FeasCP})).
Shortly afterwards, Chen et al. \cite{chen2020difference} reformulated the split feasibility problem as a difference-of-convex optimization problem and 
solved (\ref{FeasCP}) as a specific application.
In fact, we will solve this equivalent feasibility problem (\ref{FeasCP}) by other means in this paper.		
In 2021, Bo{\c{t}} and Nguyen \cite{boct2021factorization} proposed a projected gradient method with relaxation and inertia parameters for the CP factorization problem, aimed at solving
\begin{equation}\label{bot}
	\min _{X}\{\|A-X X^{\top}\|^{2} \mid X \in \mathbb{R}_{+}^{n \times r} \cap \mathcal{B}({0}, \sqrt{\operatorname{trace}(A)})\},
\end{equation}
where $\mathcal{B}(0, \varepsilon):=\{X \in \mathbb{R}^{n \times r} \mid \|X\| \leq \varepsilon\}$ is the closed ball centered at 0.
The authors argued that its optimal value is zero if and only if $A \in \mathcal{CP}_{n}$.

\subsection{Our contributions and organization of the paper}
Inspired by the idea of Groetzner and D{\"u}r \cite{groetzner2020factorization}, wherein (\ref{CPfact}) is shown to be equivalent to a feasibility problem called (\ref{FeasCP}), we treat the problem (\ref{FeasCP}) as a nonsmooth Riemannian optimization problem and solve it through a general Riemannian smoothing method.
Our contributions are summarized as follows:

1. Although it is not explicitly stated in \cite{groetzner2020factorization}, (\ref{FeasCP}) is actually a Riemannian optimization formulation. We propose a new Riemannian optimization technique and apply it to the problem.

2. In particular, we present a general framework of Riemannian smoothing for the nonsmooth Riemannian optimization problem and show convergence to a stationary point of the original problem.

3. We apply the general framework of Riemannian smoothing to CP factorization. Numerical experiments clarify that our method is competitive with other efficient CP factorization methods, especially for large-scale matrices.

In Section \ref{sec:preliminaries}, we review the process to reconstruct (\ref{CPfact}) into another feasibility problem; in particular, we take a different approach to this problem from those in other studies. 
In Section \ref{sec:sm},
we describe the general framework of smoothing methods for Riemannian optimization.
To apply it to the CP factorization problem, we employ a smoothing function named LogSumExp. Section \ref{sec:ne} is a collection of numerical experiments for CP factorization.
As a meaningful supplement, in Section \ref{sec:comparison}, we conduct further experiments (FSV problem and robust low-rank matrix completion) to explore the numerical performance of various sub-algorithms and smoothing functions on different applications.

\section{Preliminaries}\label{sec:preliminaries}

\subsection{cp-rank and cp-plus-rank}\label{subs:cprank}
First, let us recall some basic properties of completely positive matrices. 
Generally, many CP factorizations of a given $A$ may exist, and they may vary in their numbers of columns. This gives rise to the following definitions:
the cp-rank of $A \in \mathcal{S}_{n}$, denoted by $\operatorname{cp}(A)$, is defined as
\begin{equation}\label{cprank}
	\operatorname{cp}(A) 
	:= \min \{r \in \mathbb{N} \mid A=B B^{\top}, B \in \mathbb{R}^{n \times r}, B \geq 0 \},	
\end{equation}
where $ \operatorname{cp}(A) = \infty$ if $A \notin \mathcal{CP}_{n}.$ Similarly, we can define the cp-plus-rank as
\begin{equation*}
	\operatorname{cp}^{+}(A):= \min \{r \in \mathbb{N} \mid 
	A=B B^{\top}, B \in \mathbb{R}^{n \times r}, B > 0 \}.
\end{equation*}
Immediately, for all $A \in \mathcal{S}_{n}$, we have 
\begin{equation}\label{rank}
	\operatorname{rank}(A) \leq \mathrm{cp}(A) \leq \mathrm{cp}^{+}(A).	
\end{equation}
Every CP factorization $ B $ of $A$ is of the same rank as $A$ since
$\operatorname{rank}(X X^{\top})=\operatorname{rank}(X)$
holds for any matrix $X$.
The first inequality of (\ref{rank}) comes from the fact that for any CP factorization $ B $,
\begin{equation*}
	\operatorname{rank}(A) = \operatorname{rank}(B) \leq \text{ the number of columns of $ B $.}
\end{equation*}
The second is trivial by definition.

Note that computing or estimating the cp-rank of any given $A \in \mathcal{CP}_{n}$ is still an open problem.
The following result gives a tight upper bound of the cp-rank for $A \in \mathcal{CP}_n$ in terms of the order $ n $.

\begin{theorem}[Bomze, Dickinson, and Still {\cite[Theorem 4.1]{bomze2015structure}}]
	For all $A \in \mathcal{C} \mathcal{P}_{n}$, we have
	\begin{equation}\label{cpn}
		\operatorname{cp}(A) \leq \mathrm{cp}_{n}:=\left\{\begin{array}{ll}
			n & \text { for } n \in\{2,3,4\} \\
			\frac{1}{2} n(n+1)-4 & \text { for } n \geq 5.
		\end{array}\right.
	\end{equation}
\end{theorem}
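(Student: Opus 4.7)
The plan is to split the argument along the piecewise definition of $\mathrm{cp}_n$ in (\ref{cpn}): handle $n \in \{2,3,4\}$ by classical low-order bounds, and treat $n \geq 5$ by a more delicate facial/dimensional argument, which is essentially the content of the Drew--Johnson--Loewy conjecture.

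For the small regime, I would proceed as follows. When $n=2$, every $A \in \mathcal{CP}_2$ is PSD with nonnegative entries, and a direct $2 \times 2$ spectral decomposition gives $\operatorname{cp}(A)=\operatorname{rank}(A)\leq 2$. When $n=3$, one invokes the Maxfield--Minc identity $\operatorname{cp}(A)=\operatorname{rank}(A)\leq 3$, which rests on the equality $\mathcal{CP}_3 = \mathcal{S}_3^+ \cap \mathcal{N}_3$. When $n=4$, the same equality $\mathcal{CP}_4 = \mathcal{S}_4^+ \cap \mathcal{N}_4$ still holds, but now cp-rank can exceed rank; I would split on the zero/nonzero comparison graph of $A$ and use a rank-one peeling argument to deduce $\operatorname{cp}(A)\leq 4$ in each case.

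For $n \geq 5$, the reduction I have in mind is: (i) by continuity and a perturbation argument it suffices to bound $\operatorname{cp}(A)$ for $A$ in the relative interior of each face of $\mathcal{CP}_n$; (ii) letting $F(A)$ denote the minimal face of $\mathcal{CP}_n$ containing $A$, a Carath\'eodory-type argument over the extreme rays of $F(A)$ (each of the form $xx^{\top}$ with $x \in \mathbb{R}_+^n$) gives $\operatorname{cp}(A)\leq \dim F(A)$; (iii) the ambient linear hull $\{M \in \mathcal{S}_n : M_{ij}=0 \text{ whenever } A_{ij}=0\}$ bounds $\dim F(A)$ by the number of nonzero entries in the upper triangle of $A$, linking the problem to the combinatorics of the support graph; (iv) an induction on $n$ that removes a carefully chosen row/column and tracks how many rank-one summands are added in the lifting step combines with (iii) to yield the target bound.

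The principal obstacle is step (iv): a naive dimension count only reaches the trivial estimate $\tfrac{1}{2}n(n+1)$ coming from $\dim \mathcal{S}_n$, and saving the extra $4$ required by (\ref{cpn}) is the deep part of the Bomze--Dickinson--Still proof. It requires a sharper analysis of how the rank-one extreme rays $xx^{\top}$ of $\mathcal{CP}_n$ can be packed inside a face of maximal dimension, supplemented by an explicit extremal construction showing the bound is tight. Carrying this out in full is substantial combinatorial work; for the applications in this paper it suffices to quote (\ref{cpn}) from \cite{bomze2015structure}.
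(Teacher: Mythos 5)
The paper never proves this statement: it is quoted as an external result, \cite[Theorem 4.1]{bomze2015structure}, so there is no internal proof to compare your attempt against, and ultimately you do the same thing — defer to that reference. Judged as a standalone proof, however, your proposal has a genuine gap precisely where the theorem has its content. For $n\in\{2,3,4\}$ your claims are essentially correct (though the $n=4$ case, where $\mathcal{CP}_4=\mathcal{S}_4^{+}\cap\mathcal{N}_4$ holds but cp-rank can exceed rank, needs the nontrivial classical case analysis you only gesture at with ``rank-one peeling''). For $n\ge 5$, steps (i)--(iii) deliver at most $\operatorname{cp}(A)\le\dim F(A)\le\tfrac12 n(n+1)$: for a matrix $A$ with all entries positive, the support-graph bound in (iii) is vacuous, so your machinery collapses to the trivial Carath\'eodory bound on the cone $\mathcal{CP}_n$ and cannot see the difference between $\tfrac12 n(n+1)$ and $\tfrac12 n(n+1)-4$. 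The saving of $4$ is the entire point of (\ref{cpn}), and you explicitly concede that step (iv), the sharper facial/extreme-ray analysis that produces it, is not carried out. So what you have is an honest outline plus a citation, not a proof.

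That said, since the paper itself uses (\ref{cpn}) purely as a quoted black-box bound (it only needs $\mathrm{cp}_n$ as a valid choice of $r$ in (\ref{FeasCP})), relying on \cite{bomze2015structure} is perfectly adequate here; just be clear that your sketch does not substitute for the argument in that reference, and note that even the weaker bound $\tfrac12 n(n+1)$, which your steps (i)--(ii) do justify, would already suffice for everything this paper does with the theorem.
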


The following result is useful for distinguishing completely positive matrices in either the interior or on the boundary of $\mathcal{CP}_n$.
\begin{theorem}[Dickinson {\cite[Theorem 3.8]{dickinson2010improved}}]\label{thm:int_cp}
	We have
	\begin{equation*}
		\begin{aligned}
			\operatorname{int}(\mathcal{C} \mathcal{P}_{n}) 
			&=\{A \in \mathcal{S}_{n}  \mid \operatorname{rank}(A)=n, \operatorname{cp}^{+}(A)<\infty \} \\ 
			&=\{ A \in \mathcal{S}_{n}  \mid \operatorname{rank}(A)=n, A=BB^{\top}, B \in \mathbb{R}^{n \times r}, B \geq 0, \\
			& \qquad b_{j}>0 \text{ for at least one column $b_{j} $ of $ B $} \}.
		\end{aligned}
	\end{equation*}
\end{theorem}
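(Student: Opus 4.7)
The plan is to show that the three listed sets coincide by verifying a cycle of three inclusions. Call the sets $S_1:=\operatorname{int}(\mathcal{CP}_n)$, $S_2:=\{A\in\mathcal{S}_n \mid \operatorname{rank}(A)=n,\ \operatorname{cp}^+(A)<\infty\}$, and $S_3$ the third set in the statement; I would establish $S_2\subseteq S_1$, $S_1\subseteq S_3$, and $S_3\subseteq S_2$ in turn.

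For $S_2\subseteq S_1$ I would invoke a submersion / inverse-function-theorem argument. Take $A=BB^\top$ with $B>0$ entrywise and $\operatorname{rank}(A)=n$; then $B$ has full row rank and $A\succ 0$. The smooth map $F(X):=XX^\top$ has differential $DF_B(\Delta)=B\Delta^\top+\Delta B^\top$, and for any symmetric $S$ the choice $\Delta=\tfrac{1}{2}SA^{-1}B$ yields $B\Delta^\top+\Delta B^\top=S$, so $DF_B$ is surjective. By the submersion theorem, $F$ is open at $B$, so every sufficiently small perturbation $A+E\in\mathcal{S}_n$ admits a preimage $B+\Delta$ with $\|\Delta\|$ small; strict positivity of $B$ forces $B+\Delta\geq 0$, and thus $A+E\in\mathcal{CP}_n$, showing $A\in\operatorname{int}(\mathcal{CP}_n)$.

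For $S_1\subseteq S_3$, the inclusion $\mathcal{CP}_n\subseteq\mathcal{S}_n^+$ immediately yields $\operatorname{int}(\mathcal{CP}_n)\subseteq\operatorname{int}(\mathcal{S}_n^+)$, so $A\succ 0$ and $\operatorname{rank}(A)=n$. Moreover $\mathbf{e}\mathbf{e}^\top\in\mathcal{CP}_n$ and $A$ is interior, so $A-\epsilon\,\mathbf{e}\mathbf{e}^\top\in\mathcal{CP}_n$ for some $\epsilon>0$; writing this residue as $B'(B')^\top$ with $B'\geq 0$ gives the CP factorization $A=[\,B'\ \sqrt{\epsilon}\,\mathbf{e}\,][\,B'\ \sqrt{\epsilon}\,\mathbf{e}\,]^\top$, whose last column is strictly positive, placing $A$ in $S_3$.

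For the closing inclusion $S_3\subseteq S_2$ I would use an orthogonal rotation. Given $A=BB^\top$ with $B\geq 0$ and, after reordering, $b_1>0$, let $X$ be the skew-symmetric $r\times r$ matrix with $X_{1k}=1$, $X_{k1}=-1$ for $k\geq 2$ and zeros elsewhere, and set $Q:=\exp(\epsilon X)$, which is orthogonal. Then $C:=BQ$ satisfies $CC^\top=BB^\top=A$, and a first-order expansion gives $C_{ik}=B_{ik}+\epsilon B_{i1}+O(\epsilon^2)$ for $k\geq 2$ and $C_{i1}=B_{i1}-\epsilon\sum_{j\geq 2}B_{ij}+O(\epsilon^2)$. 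Since every entry of $b_1$ is strictly positive, $C$ is entrywise strictly positive for small enough $\epsilon>0$, giving $\operatorname{cp}^+(A)<\infty$. The main obstacle is this rotation step: one must actively exploit that $b_1$ is strictly positive, rather than merely nonzero, so that a small rotation can inject positivity into every other column without spoiling nonnegativity. The submersion argument and the $\epsilon\,\mathbf{e}\mathbf{e}^\top$ trick are essentially routine by comparison.
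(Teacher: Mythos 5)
The paper does not prove this theorem at all: it is imported verbatim from Dickinson \cite[Theorem 3.8]{dickinson2010improved}, so there is no internal proof to compare yours against. Judged on its own, your three-inclusion cycle $S_2\subseteq S_1\subseteq S_3\subseteq S_2$ is correct and self-contained. In the first inclusion, the computation $B\Delta^{\top}+\Delta B^{\top}=S$ for $\Delta=\tfrac12 SA^{-1}B$ is valid (rank $n$ gives $A\succ0$, so $A^{-1}$ exists), the differential of $X\mapsto XX^{\top}$ is therefore surjective onto $\mathcal{S}_n$, and restricting the submersion theorem to a neighborhood of $B$ small enough that all its points remain entrywise positive indeed shows every nearby symmetric matrix is completely positive, hence $A\in\operatorname{int}(\mathcal{CP}_n)$. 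The step $S_1\subseteq S_3$ via $A-\epsilon\,\mathbf{e}\mathbf{e}^{\top}\in\mathcal{CP}_n$ and appending the column $\sqrt{\epsilon}\,\mathbf{e}$ is the standard interior-point trick, with $\operatorname{rank}(A)=n$ coming from $\operatorname{int}(\mathcal{CP}_n)\subseteq\operatorname{int}(\mathcal{S}_n^{+})$. The closing rotation $Q=\exp(\epsilon X)$ also works: for $k\geq2$ one has $C_{ik}=B_{ik}+\epsilon B_{i1}+O(\epsilon^2)$ with $B_{ik}\geq0$ and $\min_i B_{i1}>0$, and the first column stays positive for small $\epsilon$, so $C=BQ>0$ and $\operatorname{cp}^{+}(A)<\infty$. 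Only cosmetic tightenings are needed: note that the $O(\epsilon^2)$ bounds are uniform over the finitely many entries, so a single $\epsilon$ serves all of them, and that the ``preimage $B+\Delta$ with $\|\Delta\|$ small'' in the submersion step is supplied by the local section (rank/implicit function theorem) rather than by openness alone. Your submersion argument for $S_2\subseteq S_1$ is a nice alternative to the more matrix-theoretic perturbation arguments usual in the completely positive literature, and it would be a legitimate stand-alone proof if the paper ever needed one.
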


\subsection{CP factorization as a feasibility problem}\label{subs:fea}

Groetzner and D{\"u}r \cite{groetzner2020factorization} reformulated the CP factorization problem as an equivalent feasibility problem containing an orthogonality constraint.

Given $ A \in \mathcal{CP}_{n}$, we can easily get another CP factorization $\widehat{B}$ with $ r^{\prime} $ columns for every integer $r^{\prime} \geq r$, if we also have a CP factorization $B$ with $ r $ columns.
The simplest way to construct such an $n \times r^{\prime}$ matrix $\widehat{B}$ is to append
$k:=r^{\prime}-r$ zero columns to $B,$ i.e., $\widehat{B}:=\left[B, 0_{n \times k}\right] \geq 0.$
Another way is called \emph{column replication}, i.e.,
\begin{equation}\label{col_rep}
	\widehat{B}:=[b_{1}, \ldots, b_{n-1}, \underbrace{\frac{1}{\sqrt{m}} b_{n}, \ldots, \frac{1}{\sqrt{m}} b_{n}}_{m:=r^{\prime}-n+1 \text { columns }}],
\end{equation}
where $ b_{i} $ denotes the $ i $-th column of $ B $. It is easy to see that $\widehat{B} \widehat{B}^{\top}=B B^{\top}=A.$ The next lemma is easily derived from the previous discussion, and it implies that there always exists an $ n \times \mathrm{cp}_{n} $ CP factorization for any $A \in \mathcal{CP}_{n} $. Recall that definition of $\mathrm{cp}_{n} $ is given in (\ref{cpn}).

\begin{lemma}\label{lem:r_cpr}
	Suppose that $ A \in \mathcal{S}_{n}$, $ r \in \mathbb{N} $. Then $r \geq \operatorname{cp}(A)$ if and only if $ A $ has a CP factorization $ B $ with $ r $ columns.
\end{lemma}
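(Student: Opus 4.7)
The plan is to split the biconditional into its two directions and to dispose of the edge case $A \notin \mathcal{CP}_n$ at the outset. If $A \notin \mathcal{CP}_n$, then by the convention stated after (\ref{cprank}) we have $\operatorname{cp}(A) = \infty$, so no $r \in \mathbb{N}$ satisfies $r \geq \operatorname{cp}(A)$; and by the very definition of $\mathcal{CP}_n$, $A$ admits no CP factorization with any (finite) number of columns. Both sides of the equivalence are false for every $r \in \mathbb{N}$, so the statement holds vacuously. Hence it suffices to treat the case $A \in \mathcal{CP}_n$, where $\operatorname{cp}(A)$ is a finite positive integer.

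For the $(\Leftarrow)$ direction, suppose $A$ has a CP factorization $B \in \mathbb{R}^{n \times r}$ with $B \geq 0$. Then $r$ belongs to the set $\{s \in \mathbb{N} \mid A = CC^\top,\ C \in \mathbb{R}^{n \times s},\ C \geq 0\}$ whose minimum is $\operatorname{cp}(A)$ by (\ref{cprank}), so $r \geq \operatorname{cp}(A)$ is immediate.

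For the $(\Rightarrow)$ direction, suppose $r \geq \operatorname{cp}(A)$. By the definition (\ref{cprank}) of the cp-rank as a minimum, there exists a CP factorization $B \in \mathbb{R}^{n \times \operatorname{cp}(A)}$ with $B \geq 0$ and $A = BB^\top$. Setting $k := r - \operatorname{cp}(A) \geq 0$, I would then apply the zero-padding construction $\widehat{B} := [B,\ 0_{n \times k}] \in \mathbb{R}^{n \times r}$ that was just described in the paragraph preceding the lemma. Clearly $\widehat{B} \geq 0$ and $\widehat{B}\widehat{B}^\top = BB^\top = A$, so $\widehat{B}$ is a CP factorization of $A$ with exactly $r$ columns, as required. (The column-replication formula (\ref{col_rep}) would serve equally well, but the zero-padding construction is the minimal choice.)

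There is essentially no obstacle: the lemma is a direct consequence of unpacking the definition of $\operatorname{cp}(A)$ together with the elementary zero-column padding already exhibited in the text. The only mildly delicate point worth flagging explicitly is the vacuous treatment of $A \notin \mathcal{CP}_n$, so that the statement reads correctly for \emph{all} $A \in \mathcal{S}_n$ rather than just those in $\mathcal{CP}_n$.
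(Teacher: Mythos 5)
Your proof is correct and follows exactly the route the paper intends: the lemma is stated as "easily derived from the previous discussion," meaning the zero-padding (or column-replication) construction together with the definition of $\operatorname{cp}(A)$ as a minimum, which is precisely your argument. Your explicit handling of the case $A \notin \mathcal{CP}_n$ via the convention $\operatorname{cp}(A)=\infty$ is a nice touch of completeness but does not change the approach.
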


Let $\mathcal{O}(r)$ denote the orthogonal group of order $ r $, i.e., the set of $r \times r $ orthogonal matrices. The following lemma is essential to our study. (Note that many authors have proved the existence of such an orthogonal matrix $ X $ (see, e.g., \cite[Lemma 2.1]{burer2005local} and \cite[Lemma 2.6]{groetzner2020factorization}).

\begin{lemma}\label{lem:orth}
	Let $B, C \in \mathbb{R}^{n \times r}$. $ B B^{\top}=C C^{\top} $ if and only if there exists $ X \in \mathcal{O}(r) \text{ with }B X=C$. 
\end{lemma}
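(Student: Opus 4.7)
The plan is to split the biconditional at the obvious point. The reverse direction ($\Leftarrow$) is a one-line verification: if $C = BX$ with $X \in \mathcal{O}(r)$, then $CC^\top = (BX)(BX)^\top = BXX^\top B^\top = BB^\top$ using $XX^\top = I_r$.

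For the nontrivial direction ($\Rightarrow$), my plan is to bring $B$ and $C$ into a common factored form driven by the spectral decomposition of $M := BB^\top = CC^\top$. Let $k := \operatorname{rank}(M)$ and fix a reduced eigendecomposition $M = U_+ \Lambda_+ U_+^\top$, where $U_+ \in \mathbb{R}^{n \times k}$ has orthonormal columns and $\Lambda_+ \in \mathbb{R}^{k \times k}$ is diagonal and strictly positive. Because $\operatorname{rank}(B) = \operatorname{rank}(BB^\top) = k$, the column space of $B$ equals the image of $U_+$, so $U_+ U_+^\top B = B$. Setting $V := B^\top U_+ \Lambda_+^{-1/2} \in \mathbb{R}^{r \times k}$, a direct computation yields $V^\top V = \Lambda_+^{-1/2} U_+^\top M U_+ \Lambda_+^{-1/2} = I_k$ and $U_+ \Lambda_+^{1/2} V^\top = U_+ U_+^\top B = B$. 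Performing the same construction with $C$ gives $C = U_+ \Lambda_+^{1/2} W^\top$, where $W := C^\top U_+ \Lambda_+^{-1/2}$ also has orthonormal columns.

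With these parallel factorizations in hand, I would extend $V$ and $W$ to full orthogonal matrices $\widetilde V = [V,\, V_0]$ and $\widetilde W = [W,\, W_0]$ in $\mathcal{O}(r)$ by choosing any orthonormal bases for the orthogonal complements of their column spaces in $\mathbb{R}^r$, and then define $X := \widetilde V \widetilde W^\top$. As a product of two elements of $\mathcal{O}(r)$, $X$ is orthogonal. The identity $V^\top X = V^\top \widetilde V \widetilde W^\top = [I_k,\, 0]\widetilde W^\top = W^\top$ then gives $BX = U_+ \Lambda_+^{1/2} V^\top X = U_+ \Lambda_+^{1/2} W^\top = C$, which finishes the argument.

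The only real obstacle is the bookkeeping that appears when $r$ strictly exceeds $k = \operatorname{rank}(B)$: the right-hand factors $V$ and $W$ only span a $k$-dimensional subspace of $\mathbb{R}^r$, and $X$ genuinely has freedom in how it maps the orthogonal complement. The extension step absorbs this freedom, and any choice of completion works; when $r = k$ the whole argument collapses to the formula $X = V W^\top$. As the authors note following the statement, this is a classical result, so the plan above is essentially the textbook SVD-based argument.
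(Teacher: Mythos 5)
Your proof is correct: the factorizations $B=U_+\Lambda_+^{1/2}V^\top$ and $C=U_+\Lambda_+^{1/2}W^\top$ with $V^\top V=W^\top W=I_k$ are verified by the computations you give, and completing $V,W$ to orthogonal matrices and setting $X=\widetilde V\widetilde W^\top$ does yield $BX=C$, with the rank-deficient case $r>k$ handled properly by the completion step. The paper itself does not prove Lemma~\ref{lem:orth} but instead cites \cite[Lemma 2.1]{burer2005local} and \cite[Lemma 2.6]{groetzner2020factorization}; your argument is essentially the standard spectral/SVD-based proof found in those references, so there is nothing to correct.
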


The next proposition puts the previous two lemmas together.

\begin{proposition}\label{prop:feas}
	Let $ A \in \mathcal{CP}_n, r \geq \operatorname{cp}(A), A = \bar{B} \bar{B}^{\top} $, where $\bar{B} \in \mathbb{R}^{n \times r}$ may possibly be not nonnegative. Then there exists an orthogonal matrix $ X \in \mathcal{O}(r) $ such that $ \bar{B} X \geq 0 $ and $A=(\bar{B}X)(\bar{B}X)^{\top}. $
\end{proposition}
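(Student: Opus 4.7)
The plan is to combine the two preceding lemmas in essentially one step. The proposition asks us to ``rotate'' an arbitrary rank-$r$ square-root factor $\bar{B}$ (which is only guaranteed to satisfy $\bar{B}\bar{B}^\top = A$, not $\bar{B}\geq 0$) into a nonnegative one via an orthogonal change of basis on the right. The key observation is that the hypothesis $r \geq \operatorname{cp}(A)$ is precisely what guarantees the existence of a target nonnegative factor with the \emph{same} number of columns as $\bar{B}$, so that Lemma \ref{lem:orth} becomes applicable.

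Concretely, I would first invoke Lemma \ref{lem:r_cpr}: since $A \in \mathcal{CP}_n$ and $r \geq \operatorname{cp}(A)$, there exists a nonnegative matrix $C \in \mathbb{R}^{n \times r}$ with $A = CC^\top$ and $C \geq 0$. (If desired, one can describe $C$ explicitly by starting from any CP factorization of $A$ with $\operatorname{cp}(A)$ columns and appending $r - \operatorname{cp}(A)$ zero columns, or using the column-replication construction in \eqref{col_rep}; but the clean statement of Lemma \ref{lem:r_cpr} already supplies $C$ directly.)

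Next, observe that both $\bar{B}$ and $C$ are elements of $\mathbb{R}^{n\times r}$ satisfying $\bar{B}\bar{B}^\top = A = CC^\top$. Therefore Lemma \ref{lem:orth} applies and yields an orthogonal matrix $X \in \mathcal{O}(r)$ with $\bar{B}X = C$. Since $C \geq 0$, this gives $\bar{B}X \geq 0$, and the identity $(\bar{B}X)(\bar{B}X)^\top = \bar{B}XX^\top\bar{B}^\top = \bar{B}\bar{B}^\top = A$ follows immediately from $X \in \mathcal{O}(r)$.

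There is no real obstacle here; the only subtlety worth flagging explicitly in the writeup is the role of the hypothesis $r \geq \operatorname{cp}(A)$, which ensures that the nonnegative factor $C$ can be chosen with exactly $r$ columns so that Lemma \ref{lem:orth} (which requires $\bar{B}$ and $C$ to live in the same matrix space) can be applied. Without that hypothesis, no nonnegative $C$ of the right dimension need exist, and the desired orthogonal matrix $X$ could fail to exist as well.
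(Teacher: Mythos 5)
Your proposal is correct and follows exactly the route the paper intends: it "puts the previous two lemmas together," using Lemma \ref{lem:r_cpr} to obtain a nonnegative factor $C \in \mathbb{R}^{n \times r}$ with $A = CC^{\top}$ and then Lemma \ref{lem:orth} to produce $X \in \mathcal{O}(r)$ with $\bar{B}X = C \geq 0$, from which $(\bar{B}X)(\bar{B}X)^{\top} = \bar{B}\bar{B}^{\top} = A$ is immediate. Your remark on the role of the hypothesis $r \geq \operatorname{cp}(A)$ matches the paper's own discussion following the proposition.
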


This proposition tells us that one can find an orthogonal matrix $ X $ which can turn a ``bad'' factorization $ \bar{B} $ into a ``good'' factorization $\bar{B}X$.
{Let $ r \geq \operatorname{cp}(A)$ and $\bar{B} \in \mathbb{R}^{n \times r}$ be an arbitrary (possibly not nonnegative) initial factorization $A=\bar{B} \bar{B}^{\top}$.
The task of finding a CP factorization of $A$ can then be formulated as the following feasibility problem,
\begin{equation}\label{FeasCP}\tag{FeasCP}
	\text { Find } X \text { s.t. } \bar{B} X \geq 0 \text { and } X \in \mathcal{O}(r).
\end{equation}}

We should notice that the condition $ r \geq \operatorname{cp}(A) $ is necessary; otherwise, (\ref{FeasCP}) has no solution even if $A \in \mathcal{CP}_{n}.$
Regardless of the exact 
$\operatorname{cp}(A)$ which is often unknown, one can use $ r=\mathrm{cp}_{n} $ in (\ref{cpn}). 
Note that finding an initial matrix $ \bar{B} $ is not difficult. 
Since a completely positive matrix is necessarily positive semidefinite, one can use Cholesky decomposition or spectral decomposition and then extend it to $r$ columns by using (\ref{col_rep}). The following corollary shows that the feasibility of (\ref{FeasCP}) is precisely a criterion for complete positivity.
\begin{corollary}\label{cor:fea_1}	
	Set $ r \geq \operatorname{cp}(A)$, $\bar{B} \in \mathbb{R}^{n \times r}$ an arbitrary initial factorization of $A$.
	Then
	$A \in \mathcal{CP}_{n}$ if and only if $ (\ref{FeasCP})$ is feasible.
	In this case, for any feasible solution $ X $, $\bar{B}X$ is a CP factorization of $ A $.
\end{corollary}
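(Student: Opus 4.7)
The plan is to prove both implications by directly plugging together the three ingredients already established just above the corollary: Lemma \ref{lem:r_cpr} (which matches the column count $r$ to CP-rank), Lemma \ref{lem:orth} (which lets us rotate one factorization into another by an orthogonal matrix), and a one-line algebraic identity $(\bar{B}X)(\bar{B}X)^{\top}=\bar{B}\bar{B}^{\top}$ valid whenever $X\in\mathcal{O}(r)$. Nothing new really has to be invented; the corollary is essentially a restatement of Proposition \ref{prop:feas} plus its converse.

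For the easy direction $(\Leftarrow)$, I would assume that some $X\in\mathcal{O}(r)$ with $\bar{B}X\ge 0$ exists. Then the identity $(\bar{B}X)(\bar{B}X)^{\top}=\bar{B}XX^{\top}\bar{B}^{\top}=\bar{B}\bar{B}^{\top}=A$ together with $\bar{B}X\ge 0$ exhibits $\bar{B}X$ as a nonnegative factor of $A$, so $A\in\mathcal{CP}_{n}$ and, simultaneously, the ``in this case'' clause is proved.

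For $(\Rightarrow)$, assume $A\in\mathcal{CP}_{n}$. Since $r\ge\operatorname{cp}(A)$, Lemma \ref{lem:r_cpr} produces a CP factorization $C\in\mathbb{R}^{n\times r}$ with $C\ge 0$ and $CC^{\top}=A$. Because also $\bar{B}\bar{B}^{\top}=A=CC^{\top}$ with $\bar{B},C$ of the same shape, Lemma \ref{lem:orth} supplies $X\in\mathcal{O}(r)$ such that $\bar{B}X=C$; then $\bar{B}X=C\ge 0$, so $X$ solves (\ref{FeasCP}).

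There is no real obstacle here, only a small bookkeeping point worth noting: the hypothesis ``$r\ge\operatorname{cp}(A)$'' tacitly presupposes $A\in\mathcal{CP}_{n}$ (otherwise $\operatorname{cp}(A)=\infty$ by definition), so the $(\Leftarrow)$ direction is the substantive half and the existence of the initial factorization $\bar{B}$ is justified once $A$ is known to be positive semidefinite. In practice one takes $r=\mathrm{cp}_{n}$ from (\ref{cpn}), which automatically dominates $\operatorname{cp}(A)$ for every $A\in\mathcal{CP}_{n}$, so the corollary is applicable without ever knowing $\operatorname{cp}(A)$ exactly.
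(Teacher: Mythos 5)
Your proof is correct and follows essentially the same route as the paper: the forward direction is exactly Proposition \ref{prop:feas} (i.e., Lemma \ref{lem:r_cpr} combined with Lemma \ref{lem:orth}), and the reverse direction is the one-line identity $(\bar{B}X)(\bar{B}X)^{\top}=\bar{B}\bar{B}^{\top}=A$ together with $\bar{B}X\ge 0$. Your side remark about $r\ge\operatorname{cp}(A)$ implicitly forcing $A\in\mathcal{CP}_n$ is a fair observation and does not affect the argument.
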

In this study, solving (\ref{FeasCP}) is the key to finding a CP factorization, but it is still a hard problem because  $\mathcal{O}(r)$ is nonconvex.

\subsection{Approaches to solving (\ref{FeasCP})}\label{subs:approaches}

Groetzner and D{\"u}r \cite{groetzner2020factorization} applied the so-called alternating projections method to (\ref{FeasCP}).
They defined the polyhedral cone,
$
\mathcal{P}:=\{X \in \mathbb{R}^{r \times r} : \bar{B} X \geq 0\},
$
and rewrote (\ref{FeasCP}) as
\begin{equation*}
	\text { Find } X \text { s.t. } X \in \mathcal{P} \cap \mathcal{O}(r).
\end{equation*}
The alternating projections method is as follows: choose a starting point $X_{0} \in \mathcal{O}(r)$; then compute $P_{0}=\operatorname{proj}_{\mathcal{P}}(X_{0})$ and $X_{1}=\operatorname{proj}_{\mathcal{O}(r)}(P_{0})$, and iterate this process.
Computing the projection onto $\mathcal{P}$ amounts to solving a second-order cone problem (SOCP), while computing the projection onto $\mathcal{O}(r)$ amounts to
a singular value decomposition.
Note that we need to solve an SOCP alternately at every iteration, which is still expensive in practice. A modified version without convergence involves calculating an approximation of $\operatorname{proj}_{\mathcal{P}}(X_{k})$ by using the Moore-Penrose inverse of $\bar{B}$; for details, see \cite[Algorithm 2]{groetzner2020factorization}.

Our way is to use the optimization form. Here, we denote by $ \max (\cdot) $ (resp. $\min (\cdot)$) the \emph{max-function} (resp. \emph{min-function})) that selects the largest (resp. smallest) entry of a vector or matrix. Notice that $-\min (\cdot)=\max (-(\cdot)).$ We associate (\ref{FeasCP}) with the following optimization problem:
\begin{equation*}
	\max _{X \in \mathcal{O}(r)} \{\min \:(\bar{B}X)\}.
\end{equation*}
For consistency of notation, we turn the maximization problem into a minimization problem:
\begin{equation}\label{OptCP}\tag{OptCP}
	\min _{X \in \mathcal{O}(r)} \{\max \:(-\bar{B}X)\}.
\end{equation}
The feasible set $ \mathcal{O}(r) $ is known to be compact \cite[Observation 2.1.7]{horn2012matrix}. In accordance with the extreme value theorem \cite[Theorem 4.16]{rudin1964principles}, (\ref{OptCP}) attains the global minimum, say $t$. Summarizing these observations together with Corollary \ref{cor:fea_1} yields the following proposition.

\begin{proposition}\label{prop:cp}
	Set $ r \geq \operatorname{cp}(A)$, and let $\bar{B} \in \mathbb{R}^{n \times r}$ be an arbitrary initial factorization of $A$. 
	Then the following statements are equivalent:
	\begin{enumerate}
		\item $A \in \mathcal{CP}_{n}$.
		
		\item (\ref{FeasCP}) is feasible.
		
		\item In (\ref{OptCP}), there exists a feasible solution $ X $ such that $ \max (-\bar{B}X) \leq 0$; alternatively, $ \min (\bar{B}X) \geq 0$.
		
		\item In (\ref{OptCP}), the global minimum $t \leq 0$. 
	\end{enumerate}
\end{proposition}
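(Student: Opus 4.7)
The plan is to establish the cyclic implications $1 \Rightarrow 2 \Rightarrow 3 \Rightarrow 4 \Rightarrow 1$, exploiting the fact that the first equivalence $1 \Leftrightarrow 2$ is essentially already in hand from Corollary \ref{cor:fea_1}, so no new argument is needed there. The remaining work is really just to translate between ``entrywise nonnegativity of $\bar{B}X$'' and the scalar summaries $\max(-\bar{B}X)$ and $\min(\bar{B}X)$, and then to invoke the extreme value theorem on the compact set $\mathcal{O}(r)$.

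For $2 \Leftrightarrow 3$, I would simply unfold the definitions: a matrix $M$ satisfies $M \geq 0$ entrywise if and only if its smallest entry is nonnegative, i.e. $\min(M) \geq 0$, and since $\max(-M) = -\min(M)$, this is equivalent to $\max(-M) \leq 0$. Applied to $M = \bar{B}X$ with $X \in \mathcal{O}(r)$, this immediately identifies a feasible $X$ for (\ref{FeasCP}) with a feasible $X$ for (\ref{OptCP}) satisfying the inequalities in statement 3.

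For $3 \Rightarrow 4$, any feasible $X$ realizing $\max(-\bar{B}X) \leq 0$ gives an upper bound $t \leq \max(-\bar{B}X) \leq 0$ on the global minimum. For $4 \Rightarrow 3$, I would use the fact stated in the text that $\mathcal{O}(r)$ is compact and the objective $X \mapsto \max(-\bar{B}X)$ is continuous (being the maximum of finitely many linear functions of $X$), so by the extreme value theorem the global minimum $t$ is attained at some $X^{*} \in \mathcal{O}(r)$; if $t \leq 0$ then this $X^{*}$ is precisely the feasible point required in 3. Finally, chaining $3 \Rightarrow 2$ (clear from the definitions again) with Corollary \ref{cor:fea_1} closes the loop back to 1.

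There is no genuine obstacle here; the proposition is essentially a bookkeeping result that records the different equivalent formulations already prepared in Corollary \ref{cor:fea_1} and in the paragraph motivating (\ref{OptCP}). The only point requiring any care is making sure the minimum in $4$ is actually attained (rather than just approached), which is why the compactness of $\mathcal{O}(r)$ and continuity of the objective need to be invoked explicitly; everything else reduces to the identity $\min(\bar{B}X) \geq 0 \Leftrightarrow \bar{B}X \geq 0 \Leftrightarrow \max(-\bar{B}X) \leq 0$.
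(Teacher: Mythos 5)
Your proposal is correct and follows essentially the same route as the paper, which states this proposition as a direct summary of Corollary \ref{cor:fea_1}, the identity $-\min(\cdot)=\max(-(\cdot))$, and the attainment of the global minimum via compactness of $\mathcal{O}(r)$ and the extreme value theorem. Nothing in your argument departs from the paper's reasoning.
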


\section{Riemannian smoothing method}\label{sec:sm}

The problem of minimizing a real-valued function over a Riemannian manifold $\mathcal{M}$, which is called Riemannian optimization, has been actively studied during the last few decades. In particular, the Stiefel manifold,
\begin{equation*}
	\operatorname{St}(n, p)=\{X \in \mathbb{R}^{n \times p} \mid X^{\top} X=I\},
\end{equation*}
(when $n=p$, it reduces to the orthogonal group) is an important case and is our main interest here. 
We treat the CP factorization problem, i.e., (\ref{OptCP}) as a problem of minimizing a nonsmooth function over a Riemannian manifold, for which variants of subgradient methods \cite{borckmans2014riemannian}, proximal gradient methods \cite{de2016new}, and the alternating direction method of multipliers (ADMM) \cite{kovnatsky2016madmm} have been studied.

Smoothing methods \cite{chen2012smoothing}, which use a parameterized smoothing function to approximate the objective function, are effective on a class of nonsmooth optimizations in Euclidean space. Recently, Zhang, Chen and Ma \cite{zhang2021riemannian} extended a smoothing steepest descent method to the case of Riemannian submanifolds in $\mathbb{R}^{n}$. This is not the first time that smoothing methods have been studied on manifolds. Liu and Boumal \cite{liu2020simple} extended the augmented Lagrangian method and exact penalty method to the Riemannian case. The latter leads to a nonsmooth Riemannian optimization problem to which they applied smoothing techniques. 
Cambier and Absil \cite{cambier2016robust} dealt with the problem of robust low-rank matrix completion by solving a Riemannian optimization problem, wherein they applied a smoothing conjugate gradient method. 

In this section, we propose a general Riemannian smoothing method and apply it to the CP factorization problem.

\subsection{Notation and terminology of Riemannian optimization}
Let us briefly review some concepts in Riemannian optimization, following the notation of \cite{boumal2022intromanifolds}. Throughout this paper, $\mathcal{M}$ will refer to a complete Riemannian submanifold of Euclidean space $\mathbb{R}^{n}$. 
Thus, $\mathcal{M}$ is endowed with a Riemannian metric induced from the Euclidean inner product, i.e., $\langle \xi, \eta \rangle_{x}:=\xi^{\top} \eta $ for any $\xi, \eta \in {\rm T}_{x} \mathcal{M}$, where ${\rm T}_{x} \mathcal{M} \subseteq \mathbb{R}^{n}$ is the tangent space to $\mathcal{M}$ at $x$.
The Riemannian metric induces the usual Euclidean norm $\|\xi\|_{x}:=\|\xi\|=\sqrt{\langle\xi, \xi\rangle_{x}}$ for $\xi \in {\rm T}_{x} \mathcal{M}$.
The tangent bundle ${\rm T} \mathcal{M} :=\bigsqcup_{x \in \mathcal{M}} {\rm T}_{x} \mathcal{M}$ is a disjoint union of the tangent spaces of $\mathcal{M}$. Let $f: \mathcal{M} \rightarrow \mathbb{R}$ be a smooth function on $\mathcal{M}$. The Riemannian gradient of $f$ is a vector field $\operatorname{grad} f$ on $\mathcal{M}$ that is uniquely defined by the identities: for all $ (x, v) \in \rm{T} \mathcal{M} $,
\begin{equation*}
	\mathrm{D}f(x)[v]=\langle v, \operatorname{grad} f(x) \rangle_{x}
\end{equation*}
where $\mathrm{D}f(x):{\rm T}_{x} \mathcal{M} \to {\rm T}_{f(x)} \mathbb{R} \cong \mathbb{R}$ is the differential of $f$ at $x \in \mathcal{M}$. 
Since $ \mathcal{M} $ is an embedded submanifold of $\mathbb{R}^{n}$, we have a simpler statement for $f$ that is also well defined on the whole $\mathbb{R}^{n}$:
\begin{equation*}
	\operatorname{grad} f(x)=\operatorname{Proj}_{x}(\nabla f(x)),
\end{equation*}
where $\nabla f(x)$ is the usual gradient in $\mathbb{R}^{n}$ and $\operatorname{Proj}_{x}$ denotes the orthogonal projector from $\mathbb{R}^{n}$ to ${\rm T}_{x} \mathcal{M}$. For a subset $D \subseteq \mathbb{R}^{n}$, the function $h \in C^{1}(D)$ is smooth, i.e., continuously differentiable on $D$. Given a point $x \in \mathbb{R}^{n}$ and $\delta>0$, $ \mathcal{B}(x, \delta)$ denotes a closed ball of radius $\delta$ centered at $x$. $ \mathbb{R}_{++} $ denotes the set of positive real numbers. We use subscript notation $x_{i}$ to select the $i$th entry of a vector and superscript notation $x^{k}$ to designate an element in a sequence $\{x^{k}\}$.

\subsection{Ingredients}

Now let us consider the nonsmooth Riemannian optimization problem (NROP):
\begin{equation}\label{NROP}\tag{NROP}
	\min _{x \in \mathcal{M}} f(x),
\end{equation}
where $\mathcal{M} \subseteq \mathbb{R}^{n}$ and $f: \mathbb{R}^{n} \rightarrow \mathbb{R}$ is a \emph{proper lower semi-continuous} function (maybe nonsmooth or even non-Lipschitzian) on $\mathbb{R}^{n}$. 
For convenience, the term smooth Riemannian optimization problem (SROP) refers to (\ref{NROP}) when $f(\cdot)$ is continuously differentiable on $\mathbb{R}^{n}$. To avoid confusion in this case, we use $ g $ instead of $ f $,
\begin{equation}\label{SROP}\tag{SROP}
	\min _{x \in \mathcal{M}} g(x).
\end{equation}
Throughout this subsection, we will refer to many of the concepts in \cite{zhang2021riemannian}.

First, let us review the usual concepts and properties related to generalized subdifferentials in $\mathbb{R}^{n}$. For a proper lower semi-continuous function $f: \mathbb{R}^{n} \rightarrow \mathbb{R}$, the \emph{Fréchet subdifferential} and the \emph{limiting subdifferential of $f$ at $x \in \mathbb{R}^{n}$} are defined as
\begin{equation*}
	\begin{aligned}
		\hat{\partial} f(x):=\{\nabla h(x) \mid \exists \delta&>0 \text { such that } h \in C^{1}(\mathcal{B}(x, \delta)) \text { and } \\
		&f-h \text { attains a local minimum at } x \text { on } \mathbb{R}^{n}\},
	\end{aligned}
\end{equation*}
\begin{equation*}
	\partial f(x):=\{\lim _{\ell \rightarrow \infty} v^{\ell}\mid v^{\ell} \in \hat{\partial} f\left(x^{\ell}\right),\left(x^{\ell}, f\left(x^{\ell}\right)\right) \rightarrow(x, f(x))\}.
\end{equation*}
The definition of $\hat{\partial} f(x)$ above is not the standard one: the standard definition follows \cite[8.3 Definition]{rockafellar2009variational}. 
But these definitions are equivalent by \cite[8.5 Proposition]{rockafellar2009variational}. 
For locally Lipschitz functions, the \emph{Clarke subdifferential at $x \in \mathbb{R}^{n}$}, $\partial^{\circ} f(x)$, is the convex hull of the limiting subdifferential. 
Their relationship is as follows:
\begin{equation*}
	\hat{\partial} f(x) \subseteq \partial f(x) \subseteq \partial^{\circ} f(x).
\end{equation*}
Notice that if $f$ is convex, $\partial f(x)$ and $ \partial^{\circ} f(x) $ coincide with the classical subdifferential in convex analysis \cite[8.12 Proposition]{rockafellar2009variational}.

\begin{example}[Bagirov, Karmitsa, and M{\"a}kel{\"a} {\cite[Theorem 3.23]{bagirov2014introduction}}]
	\label{lim_sudf_max}
	From a result on the pointwise max-function in convex analysis, we have 
	\begin{equation*}
		\partial \max(x)=\operatorname{conv}\{e_i \mid i \in \mathcal{I}(x) \},
	\end{equation*}
	where $e_i$'s are the standard bases of $\mathbb{R}^{n} $ and $\mathcal{I}(x)=\{i \mid x_{i}=\max (x)\}.$ 
\end{example}

Next, we extend our discussion to include generalized subdifferentials of a nonsmooth function on submanifolds $\mathcal{M}$.
The \emph{Riemannian Fréchet subdifferential} and the \emph{Riemannian limiting subdifferential of $f$ at $x \in \mathcal{M}$} (see, e.g., \cite[Definition 3.1]{zhang2021riemannian}) are defined as
\begin{equation*}
	\begin{aligned}
		\hat{\partial}_{\mathcal{R}} f(x):=\{\operatorname{grad} h(x) \mid \exists \delta&>0 \text { such that } h \in C^{1}(\mathcal{B}(x, \delta)) \text { and } \\
		&f-h \text { attains a local minimum at } x \text { on } \mathcal{M}\},
	\end{aligned}
\end{equation*}
\begin{equation*}
	\partial_{\mathcal{R}} f(x):=\{\lim _{\ell \rightarrow \infty} v^{\ell}\mid v^{\ell} \in \hat{\partial}_{\mathcal{R}} f\left(x^{\ell}\right),\left(x^{\ell}, f\left(x^{\ell}\right)\right) \rightarrow(x, f(x))\}.
\end{equation*}
If $\mathcal{M}=\mathbb{R}^{n}$, the above definitions coincide with the usual Fréchet and limiting subdifferentials in $\mathbb{R}^{n}$. Moreover, it follows directly that, for all $x \in \mathcal{M}$, one has $\hat{\partial}_{\mathcal{R}} f(x) \subseteq \partial_{\mathcal{R}} f(x)$. According to \cite[Proposition 3.2]{zhang2021riemannian}, if ${x}$ is a local minimizer of $f$ on $\mathcal{M}$, then $0 \in \hat{\partial}_{\mathcal{R}} f({x})$. Thus, {we call a point $x \in \mathcal{M}$ a \emph{Riemannian limiting stationary point of (\ref{NROP})} if 
\begin{equation}\label{Rlsp}
	0 \in \partial_{\mathcal{R}} f(x).
\end{equation}}
In this paper, we will treat it as a necessary condition for a local solution of (\ref{NROP}) to exist.

The smoothing function is the most important tool of the smoothing method.
\begin{definition}[Zhang and Chen {\cite[Definition 3.1]{zhang2020smoothing}}]
	\label{defn:sm}
	A function $\tilde{f}(\cdot, \cdot): \mathbb{R}^{n} \times \mathbb{R}_{++} \rightarrow \mathbb{R}$ is called a \emph{smoothing function} of $f: \mathbb{R}^{n} \rightarrow \mathbb{R}$, if $\tilde{f}(\cdot, \mu)$ is continuously differentiable in $\mathbb{R}^{n}$ for any $\mu >0$,
	\begin{equation*}
		\lim _{z \rightarrow x, \mu \downarrow 0} \tilde{f}(z, \mu)=f(x)
	\end{equation*}
	and there exist a constant $\kappa>0$ and a function $\omega: \mathbb{R}_{++} \rightarrow \mathbb{R}_{++}$ such that
	\begin{equation}\label{defn:kappa}
		\lvert \tilde{f}(x, \mu)-f(x)\rvert \leq \kappa \omega(\mu) \quad \text { with } \quad \lim _{\mu \downarrow 0} \omega(\mu)=0.
	\end{equation}
\end{definition}

\begin{example}[Chen, Wets, and Zhang {\cite[Lemma 4.4]{chen2012stochastic}}]
	\label{exmp:lse1}
	The LogSumExp function, $\operatorname{lse}(x,\mu) : \mathbb{R}^n \times \mathbb{R}_{++} \rightarrow \mathbb{R} $, given by
	\begin{equation*}
		\operatorname{lse}(x,\mu) :=
		\mu \log ( \textstyle \sum_{i=1}^{n} \exp ( x_i/\mu) ),
	\end{equation*}
	is the smoothing function of $ \max (x) $ because we can see that
	
	(i) $\operatorname{lse}(\cdot, \mu)$ is smooth on $\mathbb{R}^{n}$ for any $\mu>0$.
	Its gradient
	$\nabla_{x} \operatorname{lse}(x,\mu) $ is given by $ \sigma(\cdot,\mu) : \mathbb{R}^n \rightarrow \Delta^{n-1}$,
	\begin{equation}\label{sigma1}
		\nabla_{x} \operatorname{lse}(x,\mu) =\sigma(x,\mu):=\frac{1}{\sum_{\ell=1}^{n} \exp (x_\ell/\mu )}
		[\begin{array}{c}
			\exp ( x_{1}/\mu ),
			\cdots,
			\exp ( x_{n}/\mu )
		\end{array}]^{\top},
	\end{equation}
	where $\Delta^{n-1}:=\{x \in \mathbb{R}^{n} \mid \sum_{i=1}^{n} x_i =1, x_{i} \geq 0\}$ is the unit simplex.
	
	(ii) For all $ x \in \mathbb{R}^n$ and $ \mu > 0 $, we have $ \max (x) < \operatorname{lse}(x,\mu) \leq \max( x) + \mu \log(n). $ 
	Then, the constant $\kappa= \log(n)$ and $\omega(\mu)=\mu$. The above inequalities imply that $\lim _{z \rightarrow x, \mu \downarrow 0} \operatorname{lse}(z,\mu)= \max (x).$
\end{example}

Gradient sub-consistency or consistency is crucial to showing that any limit point of the Riemannian smoothing method is also a limiting stationary point of (\ref{NROP}).

\begin{definition}[Zhang, Chen and Ma {\cite[Definition 3.4 \& 3.9]{zhang2021riemannian}}] \label{defn:sub-consistency}
	A smoothing function $\tilde{f}$ of $f$ is said to satisfy \emph{gradient sub-consistency on $\mathbb{R}^{n}$} if, for any $x \in \mathbb{R}^{n}$,
	\begin{equation}\label{eq:sub_R}
		G_{\tilde{f}}(x) \subseteq \partial f(x),
	\end{equation}
	where the \emph{subdifferential of $f$ associated with $\tilde{f}$ at $x \in \mathbb{R}^{n}$} is given by
	\begin{equation*}
		G_{\tilde{f}}(x):=\{u \in \mathbb{R}^{n} \mid \nabla_{x} \tilde{f}\left(z_{k}, \mu_{k}\right) \rightarrow u  \text { for some } z_{k} \rightarrow x, \mu_{k} \downarrow 0\}.
	\end{equation*}
	Similarly, $\tilde{f}$ is said to satisfy \emph{Riemannian gradient sub-consistency on $\mathcal{M}$} if, for any $x \in \mathcal{M}$,
	\begin{equation}\label{eq:sub_M}
		G_{\tilde{f}, \mathcal{R}}(x) \subseteq \partial_{\mathcal{R}} f(x),
	\end{equation}
	where the \emph{Riemannian subdifferential of $f$ associated with $\tilde{f}$ at $x \in \mathcal{M}$} is given by
	\begin{equation*}
		G_{\tilde{f}, \mathcal{R}}(x)=\{v \in \mathbb{R}^{n}\mid \operatorname{grad} \tilde{f}\left(z_{k}, \mu_{k}\right) \rightarrow v \text { for some } z_{k} \in \mathcal{M}, z_{k} \rightarrow x, \mu_{k} \downarrow 0\}.
	\end{equation*}
\end{definition}

If one substitutes the inclusion with equality in (\ref{eq:sub_R}), then $\tilde{f}$ satisfies \emph{gradient consistency} on $\mathbb{R}^{n}$, and similarly in (\ref{eq:sub_M}) for $\mathcal{M}$. Thanks to the following useful proposition from \cite{zhang2021riemannian}, we can induce gradient sub-consistency on $\mathcal{M}$ from that on $\mathbb{R}^{n}$ if $f$ is locally Lipschitz.

\begin{proposition}[Zhang, Chen and Ma {\cite[Proposition 3.10]{zhang2021riemannian}}] 
	Let $f$ be a locally Lipschitz function and $\tilde{f}$ a smoothing function of $f .$ For $\tilde{f}$, if gradient sub-consistency holds on $\mathbb{R}^{n}$, then Riemannian gradient sub-consistency holds on $\mathcal{M}$ as well.
\end{proposition}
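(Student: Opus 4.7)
The aim is to show $G_{\tilde f,\mathcal R}(x)\subseteq\partial_{\mathcal R} f(x)$ at an arbitrary $x\in\mathcal M$. My strategy is to exploit the embedded-submanifold identity $\operatorname{grad}\tilde f(z,\mu)=\operatorname{Proj}_{z}(\nabla_x\tilde f(z,\mu))$, already recorded in the paper's preliminaries, in order to transport the $\mathbb R^n$-side gradient sub-consistency onto the manifold through orthogonal projection. Fix $v\in G_{\tilde f,\mathcal R}(x)$ and choose witnessing sequences $z_k\in\mathcal M$ with $z_k\to x$ and $\mu_k\downarrow 0$ for which $\operatorname{grad}\tilde f(z_k,\mu_k)\to v$. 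The projection identity then yields $v=\lim_{k}\operatorname{Proj}_{z_k}\bigl(\nabla_x\tilde f(z_k,\mu_k)\bigr)$.

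The next step is to upgrade this convergence to convergence of the Euclidean gradients themselves by passing to a subsequence. For that I need boundedness of $\{\nabla_x\tilde f(z_k,\mu_k)\}$ in a neighbourhood of $(x,0^+)$, and this is where the local Lipschitzness of $f$ genuinely enters: it ensures that $\partial f(x)$ is nonempty and compact (bounded by the local Lipschitz constant), and combined with the $\mathbb R^n$ gradient sub-consistency hypothesis this compactness precludes the Euclidean smoothing gradients from escaping to infinity along any $z_k\to x$, $\mu_k\downarrow 0$. Once boundedness is secured, I would pass to a subsequence with $\nabla_x\tilde f(z_k,\mu_k)\to u\in\mathbb R^n$, invoke the hypothesis to conclude $u\in\partial f(x)$, and use continuity of the projector map $z\mapsto\operatorname{Proj}_z$ on the smooth submanifold $\mathcal M$ to deduce $v=\operatorname{Proj}_x(u)$.

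It remains to verify the projection characterization $\operatorname{Proj}_x(\partial f(x))\subseteq\partial_{\mathcal R} f(x)$. I would treat Fréchet subgradients first: if $h\in C^1(\mathcal B(y,\delta))$ and $f-h$ attains a local minimum at $y\in\mathcal M$ on $\mathbb R^n$, then the same inequality holds a fortiori on $\mathcal M$, so $\operatorname{grad} h(y)=\operatorname{Proj}_y(\nabla h(y))\in\hat\partial_{\mathcal R} f(y)$; hence $\operatorname{Proj}_y(\hat\partial f(y))\subseteq\hat\partial_{\mathcal R} f(y)$ at every $y\in\mathcal M$. A diagonal argument along the defining sequences of $u\in\partial f(x)$, combined with smoothness of the nearest-point projection from a tubular neighbourhood of $\mathcal M$ onto $\mathcal M$ (used to push the sequences $x^\ell\to x$ from $\mathbb R^n$ onto $\mathcal M$ while controlling both the function values and the subgradients), then lifts the inclusion to the limiting subdifferentials and gives $v=\operatorname{Proj}_x(u)\in\partial_{\mathcal R} f(x)$, finishing the proof.

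The two substantive points I expect to require the most care are the boundedness step, which truly needs local Lipschitzness (a smoothing of a merely continuous function need not have bounded gradients, so the compactness of $\partial f(x)$ is essential), and the Fréchet-to-limiting diagonalization, since the sequences realising $u\in\partial f(x)$ live in $\mathbb R^n$ rather than on $\mathcal M$ and must be transported back onto the manifold without spoiling either the value convergence $f(x^\ell)\to f(x)$ or the subgradient convergence.
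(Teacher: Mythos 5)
The paper itself does not prove this proposition (it is quoted from the cited source), so your proposal stands or falls on its own merits, and it falls at its final step: the inclusion $\operatorname{Proj}_x(\partial f(x)) \subseteq \partial_{\mathcal{R}} f(x)$ that your plan reduces everything to is false for general locally Lipschitz $f$. What your Fréchet-level argument does prove is only $\operatorname{Proj}_y(\hat{\partial} f(y)) \subseteq \hat{\partial}_{\mathcal{R}} f(y)$ at points $y \in \mathcal{M}$; but an element $u \in \partial f(x)$ is generated by Euclidean Fréchet subgradients at points $x^{\ell}$ that may approach $x$ from \emph{outside} $\mathcal{M}$, and those subgradients encode the behaviour of $f$ off the manifold, whereas $\partial_{\mathcal{R}} f(x)$ depends only on $f|_{\mathcal{M}}$. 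No tubular-neighbourhood diagonalization can bridge this: a Euclidean Fréchet subgradient at an off-manifold point does not become, even approximately, a Riemannian Fréchet subgradient of $f$ at its nearest-point projection. Concretely, take $\mathcal{M}=\{x \in \mathbb{R}^2 \mid x_2=0\}$, $x=0$, and the globally Lipschitz function $f(x_1,x_2)=\min(|x_2|,\max(x_1,0))$. Then $f \equiv 0$ on $\mathcal{M}$, so $\hat{\partial}_{\mathcal{R}} f(y)=\{0\}$ for every $y \in \mathcal{M}$ and $\partial_{\mathcal{R}} f(0)=\{0\}$; yet near $(\epsilon,2\epsilon)$ with $\epsilon>0$ small one has $f=x_1$, so $(1,0)=\lim_{\epsilon \downarrow 0}\nabla f(\epsilon,2\epsilon) \in \partial f(0)$ and $\operatorname{Proj}_0(\partial f(0)) \ni (1,0) \notin \partial_{\mathcal{R}} f(0)$. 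Hence any proof that first extracts a Euclidean limit $u \in \partial f(x)$ and then projects cannot succeed; a correct argument must genuinely use that the witnessing points $z_k$ lie on $\mathcal{M}$ and that gradient sub-consistency is assumed at all points of $\mathbb{R}^n$ (in particular at points near $x$), rather than only at $x$.

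There is a second, independent gap in your boundedness step. Gradient sub-consistency constrains only the limits of \emph{convergent} sequences $\nabla_x \tilde{f}(z_k,\mu_k)$; if these gradients diverge, $G_{\tilde{f}}(x)$ simply does not record them and the inclusion $G_{\tilde{f}}(x) \subseteq \partial f(x)$ holds vacuously along that sequence. So the compactness of $\partial f(x)$ (which local Lipschitzness does give) in no way ``precludes the Euclidean smoothing gradients from escaping to infinity.'' Definition \ref{defn:sm} controls only function values, $\lvert \tilde{f}(\cdot,\mu)-f(\cdot)\rvert \leq \kappa\omega(\mu)$, which is perfectly compatible with small-amplitude, high-frequency oscillations of $\tilde{f}(\cdot,\mu)$ whose gradients blow up as $\mu \downarrow 0$; and without boundedness you cannot extract the convergent subsequence $\nabla_x \tilde{f}(z_k,\mu_k) \to u$ on which your entire reduction rests. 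Both defects require a different argument, not more care along the proposed route.
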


The next example illustrates Riemannian gradient sub-consistency on $\mathcal{M}$ for $\operatorname{lse}(x,\mu)$ in Example \ref{exmp:lse1}, since any convex function is locally Lipschitz continuous.

\begin{example}[Chen, Wets, and Zhang {\cite[Lemma 4.4]{chen2012stochastic}}]
	\label{thm:lse2}
	The smoothing function $\operatorname{lse}(x,\mu)$ of $\max(x)$ satisfies gradient consistency on $\mathbb{R}^{n}$. That is, for any $ x \in \mathbb{R}^n$,
	\begin{equation*}
		\partial \max(x)=G_{\operatorname{lse}}(x)=\{\lim _{x^{k} \rightarrow x, \mu_{k} \downarrow 0} \sigma(x^{k},\mu_{k})\}.
	\end{equation*}
	Note that the original assertion of \cite[Lemma 4.4]{chen2012stochastic} is gradient consistency in the Clarke sense, i.e.,
	$ \partial^{\circ} \max(x)=G_{\operatorname{lse}}(x) $.
\end{example}

\subsection{Riemannian smoothing method}

Motivated by the previous papers \cite{cambier2016robust,liu2020simple,zhang2021riemannian} on smoothing methods and Riemannian manifolds, we propose a general Riemannian smoothing method. Algorithm \ref{alg:sm_NROP} is the basic framework of this general method.

\begin{algorithm}
	\caption{Basic Riemannian smoothing method for (\ref{NROP})}
	\label{alg:sm_NROP}
	\begin{algorithmic}
		\STATE{\textbf{Initialization:}}
		Given $\theta \in(0,1), \mu_{0}>0$, and $x_{-1} \in \mathcal{M}$, select a smoothing function $\tilde{f}$ and a Riemannian algorithm (called sub-algorithm here) for (\ref{SROP}).
		
		\STATE{\textbf{for} $k=0,1,2, \ldots$}
		
		\quad Solve
		\begin{equation}\label{smp0}
			x_{k} = \arg\min _{x \in \mathcal{M}} \tilde{f}(x, \mu_{k})
		\end{equation}
		\quad approximately by using the chosen sub-algorithm, starting at $x^{k-1}$

		\quad \textbf{if}
		final convergence test is satisfied
		
		\qquad \textbf{stop} with approximate solution $x_{k}$
		
		\quad \textbf{end if}
		
		\quad Set $\mu_{k+1}=\theta \mu_{k}$
		
		\STATE{\textbf{end for}}
	\end{algorithmic}
\end{algorithm}

Now let us describe the convergence properties of the basic method. First, let us assume that the function $\tilde{f}(x, \mu_{k})$ has a minimizer on $\mathcal{M}$ for each value of $\mu_{k}$.
\begin{theorem}\label{thm:basic1}
	Suppose that each $x^{k}$ is an exact global minimizer of (\ref{smp0}) in Algorithm \ref{alg:sm_NROP}.
	Then every limit point $x^{*}$ of the sequence $\{x^{k}\}$ is a global minimizer of the problem (\ref{NROP}).
\end{theorem}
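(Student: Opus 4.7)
The plan is to exploit the two defining properties of a smoothing function together with the global optimality of each $x^k$, mimicking the classical argument for penalty/smoothing schemes in Euclidean space but now with $\mathcal{M}$ as the ambient feasible set. Since $\mathcal{M}$ is kept fixed across iterations and only the objective changes with $\mu_k$, the Riemannian structure will not really play a role here: we just need continuity along two different limiting regimes of $\tilde{f}$.

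First I would pick a limit point $x^{*}$ of $\{x^{k}\}$ and extract a subsequence $x^{k_{j}} \to x^{*}$; note that $x^{*} \in \mathcal{M}$ because $\mathcal{M}$ is (topologically) closed in $\mathbb{R}^{n}$ as a complete embedded submanifold, and that $\mu_{k_{j}} = \theta^{k_{j}} \mu_{0} \downarrow 0$ since $\theta \in (0,1)$. Next, fix an arbitrary competitor $y \in \mathcal{M}$. By the global optimality assumption on $x^{k_{j}}$ for the smoothed problem at level $\mu_{k_{j}}$, one has
\begin{equation*}
    \tilde{f}(x^{k_{j}}, \mu_{k_{j}}) \;\leq\; \tilde{f}(y, \mu_{k_{j}}).
\end{equation*}

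The heart of the argument is to pass to the limit on both sides as $j \to \infty$. For the right-hand side, $y$ is fixed, so the bound $\lvert \tilde{f}(y,\mu) - f(y)\rvert \leq \kappa\,\omega(\mu)$ from Definition \ref{defn:sm} with $\omega(\mu_{k_{j}}) \to 0$ gives $\tilde{f}(y, \mu_{k_{j}}) \to f(y)$. For the left-hand side, the joint limit in Definition \ref{defn:sm}, namely $\lim_{z\to x,\mu\downarrow 0} \tilde{f}(z,\mu) = f(x)$, applied at $x = x^{*}$ along the sequence $(x^{k_{j}}, \mu_{k_{j}}) \to (x^{*}, 0)$, yields $\tilde{f}(x^{k_{j}}, \mu_{k_{j}}) \to f(x^{*})$. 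Combining the two limits gives $f(x^{*}) \leq f(y)$, and since $y \in \mathcal{M}$ was arbitrary, $x^{*}$ is a global minimizer of (\ref{NROP}).

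There is no genuine obstacle to overcome: the proof is entirely a straightforward ``limit-of-minima'' argument, and the potential non-Lipschitz/nonsmooth nature of $f$ is harmless because Definition \ref{defn:sm} already delivers the two pointwise convergence statements we need. The only mild subtleties worth flagging in the write-up are (i) that we do not need to assume the whole sequence converges or that the minimizers are unique, only that a limit point exists; (ii) that the closedness of $\mathcal{M}$ in $\mathbb{R}^{n}$ secures $x^{*} \in \mathcal{M}$; and (iii) that no continuity of $f$ is invoked beyond what the smoothing-function definition supplies, so the proper lower semi-continuous hypothesis on $f$ from the statement of (\ref{NROP}) is sufficient.
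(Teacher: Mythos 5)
Your proposal is correct and uses essentially the same argument as the paper: global optimality of $x^{k}$ for the smoothed problem, the error bound $\lvert \tilde{f}(\cdot,\mu)-f(\cdot)\rvert \leq \kappa\omega(\mu)$, and the joint limit $\lim_{z\to x,\,\mu\downarrow 0}\tilde{f}(z,\mu)=f(x)$ applied along the convergent subsequence. The only (mild) difference is that you compare $x^{k_j}$ against an arbitrary $y\in\mathcal{M}$ rather than against a global solution $\bar{x}$ of (\ref{NROP}) as the paper does, which has the small advantage of not implicitly assuming that $f$ attains its infimum on $\mathcal{M}$.
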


\begin{proof}
	Let $\bar{x}$ be a global solution of (\ref{NROP}), that is,
	\begin{equation*}
		f(\bar{x}) \leq f(x) \quad \text { for all } x \in \mathcal{M}.
	\end{equation*}
	From the Definition \ref{defn:sm} of the smoothing function, there exist a constant $\kappa>0$ and a function $\omega: \mathbb{R}_{++} \rightarrow \mathbb{R}_{++}$ such that, for all $x \in \mathcal{M}$,
	\begin{equation}\label{16}
		-\kappa \omega(\mu) \leq \tilde{f}(x, \mu)-f(x) \leq \kappa \omega(\mu)
	\end{equation}
	with $\lim _{\mu \downarrow 0} \omega(\mu)=0.$ 
	Substituting $ x^{k} $ and combining with the global solution $ \bar{x} $, we have that
	\begin{equation*}
		\tilde{f}(x^{k}, \mu_{k}) \geq f(x^{k})-\kappa \omega(\mu_{k}) 
		\geq f(\bar{x})-\kappa \omega(\mu_{k}).
	\end{equation*}
	By rearranging this expression, we obtain
	\begin{equation}\label{17}
		-\kappa \omega(\mu_{k}) \leq \tilde{f}(x^{k}, \mu_{k}) -f(\bar{x}).
	\end{equation}
	\indent Since $x^{k}$ minimizes $\tilde{f}(x, \mu_{k})$ on $\mathcal{M} $ for each $\mu_{k}$, we have that $\tilde{f}(x^{k}, \mu_{k}) \leq \tilde{f}(\bar{x}, \mu_{k})$, which leads to
	\begin{equation}\label{18}
		\tilde{f}(x^{k}, \mu_{k}) - f(\bar{x}) \leq \tilde{f}(\bar{x}, \mu_{k}) - f(\bar{x}) \leq \kappa \omega(\mu_{k}).
	\end{equation}
	The second inequality above follows from (\ref{16}).
	Combining (\ref{17}) and (\ref{18}), we obtain
	\begin{equation}\label{eq:231}
		\lvert \tilde{f}(x^{k}, \mu_{k})-f(\bar{x})\rvert \leq \kappa \omega(\mu_{k}).
	\end{equation}
	\indent Now, suppose that $x^{*}$ is a limit point of $\{x^{k}\}$, so that there is an infinite subsequence $\mathcal{K}$ such that
	$
	\lim _{k \in \mathcal{K}} x^{k}=x^{*}.
	$
	Note that $x^{*} \in \mathcal{M}$ because $\mathcal{M}$ is complete.
	By taking the limit as $k \rightarrow \infty, k \in \mathcal{K}$, on both sides of (\ref{eq:231}), again by the definition of the smoothing function, we obtain
	\begin{equation*}
		\lvert f(x^{*})-f(\bar{x}) \rvert =
		\lim _{k \in \mathcal{K}} \lvert \tilde{f}(x^{k}, \mu_{k})-f(\bar{x})\rvert \leq \lim _{k \in \mathcal{K}} \kappa \omega(\mu_{k}) = 0.
	\end{equation*}
	Thus, it follows that $ f(x^{*})=f(\bar{x}).$
	Since $x^{*} \in \mathcal{M}$ is a point whose objective value is equal to that of the global solution $\bar{x}$, we conclude that $x^{*}$, too, is a global solution.
\end{proof}

This strong result requires us to find a global minimizer of each subproblem, which, however, cannot always be done. The next result concerns the convergence properties of the sequence $\tilde{f}(x^{k}, \mu_{k})$ under the condition that $ \tilde{f} $ has the following additional property:
\begin{equation}\label{AP1}
	0< \mu_{2} < \mu_{1} \Longrightarrow
	\tilde{f}(x,\mu_{2})  < \tilde{f}(x,\mu_{1})\text{ for all } x \in \mathbb{R}^n.
\end{equation}

\begin{example}\label{exmp:lse2}
	The above property holds for $\operatorname{lse}(x,\mu)$ in Example \ref{exmp:lse1}; i.e., we have $\operatorname{lse}(x,\mu_{2}) < \operatorname{lse}(x,\mu_{1})$ on $\mathbb{R}^n$, provided that $ 0< \mu_{2} < \mu_{1} $. 
	%See Appendix \ref{apd2} for the proof.
		{Note that under the equality, 
		\begin{equation*}
			\sum_{l=1}^{n} \exp (x_l/{\mu} ) =\exp\{ \operatorname{lse}(x,\mu) / \mu\},
		\end{equation*}
		the $i$-th component of $ \sigma(x,\mu) $ can be rewritten as
		\begin{equation*}
			\sigma_{i}(x,\mu)=
			\exp\{ {( x_i - \operatorname{lse}(x,\mu)) } /\mu\}.
		\end{equation*}
		For any fixed $ x \in \mathbb{R}^n$, consider the derivative of a real function
		$ \mu \rightarrow \operatorname{lse}(x,\cdot) : \mathbb{R}_{++} \to \mathbb{R}$.
		Then we have
		\begin{equation*}
			\begin{aligned}
				\nabla_{\mu} \operatorname{lse}(x,\mu) = \operatorname{lse} / \mu - \frac{\sum_{i=1}^{n} x_{i} \exp (x_{i}/{\mu})}{\mu \exp
					{(\operatorname{lse} / \mu)}}
				=& ( \operatorname{lse} - \textstyle \sum_{i=1}^{n} x_{i} \exp\{( x_i - \operatorname{lse})/\mu \} ) /\mu \\
				=& ( \operatorname{lse} - \textstyle \sum_{i=1}^{n} x_{i} \sigma_i ) /\mu \leq  0,
			\end{aligned}
		\end{equation*}
		where ``$ \operatorname{lse} $, $ \sigma $'' are shorthand for $ \operatorname{lse}(x,\mu)$ and $ \sigma(x,\mu)$.
		For the last inequality above, we observe that $ \sigma \in \Delta^{n-1}$;  hence, the term $ \sum_{i=1}^{n} x_{i} \sigma_i$ is a convex combination of all entries of $ x $, which implies that $ \sum_{i=1}^{n} x_{i} \sigma_i \leq \max (x) < \operatorname{lse}. $
		This completes the proofs of our claims.}
\end{example}

In \cite{cambier2016robust}, the authors considered a special case of Algorithm \ref{alg:sm_NROP}, wherein the smoothing function $\tilde{f}(x,\mu) =\sqrt{\mu^{2}+x^{2}}$ of $\lvert x \rvert$ also satisfies (\ref{AP1}) and a Riemannian conjugate gradient method is used for (\ref{smp0}).

\begin{theorem}\label{thm:basic2}
	Suppose that $f^{*}:=\inf _{x \in \mathcal{M}} f(x)$ exists and the smoothing function $ \tilde{f} $ has property (\ref{AP1}). Let $f^{k}:=\tilde{f}(x^{k}, \mu_{k})$. Then the sequence $\{f^{k}\}$ generated by Algorithm \ref{alg:sm_NROP} is strictly decreasing and bounded below by $f^{*}$; hence,
	\begin{equation*}
		\lim_{k \to \infty} \lvert f^{k} - f^{k-1} \rvert =0.
	\end{equation*}
\end{theorem}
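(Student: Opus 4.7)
The plan is to establish the two claims (strict decrease and boundedness below by $f^*$) separately and then invoke monotone convergence. For the strict decrease, I would chain two inequalities. First, since the sub-algorithm for (\ref{smp0}) is started at $x^{k-1}$ and is a descent-type Riemannian method, it returns $x^k$ with $\tilde{f}(x^k, \mu_k) \leq \tilde{f}(x^{k-1}, \mu_k)$. Second, property (\ref{AP1}) combined with $\mu_k = \theta \mu_{k-1} < \mu_{k-1}$ (since $\theta \in (0,1)$) yields $\tilde{f}(x^{k-1}, \mu_k) < \tilde{f}(x^{k-1}, \mu_{k-1})$. Concatenating these gives
$$ f^k \;=\; \tilde{f}(x^k, \mu_k) \;\leq\; \tilde{f}(x^{k-1}, \mu_k) \;<\; \tilde{f}(x^{k-1}, \mu_{k-1}) \;=\; f^{k-1}. $$

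For the lower bound $f^k > f^*$, I would first prove the stronger pointwise statement that $\tilde{f}(x, \mu) > f(x)$ for every $x \in \mathbb{R}^n$ and every $\mu > 0$. Combining (\ref{AP1}) with the limit $\lim_{z \to x, \mu \downarrow 0} \tilde{f}(z, \mu) = f(x)$ from Definition \ref{defn:sm}, for fixed $x$ the map $\mu \mapsto \tilde{f}(x, \mu)$ is strictly increasing on $\mathbb{R}_{++}$ and decreases to $f(x)$ as $\mu \downarrow 0$. Hence $\tilde{f}(x, \mu) > \inf_{\mu' > 0} \tilde{f}(x, \mu') = f(x)$ for every $\mu > 0$. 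Applying this with $x = x^k$ and $\mu = \mu_k$ delivers $f^k > f(x^k) \geq f^*$.

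Once strict monotone decrease and boundedness below are both in hand, the monotone convergence theorem forces $\{f^k\}$ to converge to some finite limit not smaller than $f^*$, so consecutive terms satisfy $\lim_{k \to \infty}|f^k - f^{k-1}| = 0$, as required.

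The main subtlety I anticipate is the first link in the chain, namely $\tilde{f}(x^k, \mu_k) \leq \tilde{f}(x^{k-1}, \mu_k)$. This is not formally singled out as a hypothesis of the theorem but is implicit in the algorithm's description ``\emph{approximately by using the chosen sub-algorithm, starting at} $x^{k-1}$''. Any reasonable Riemannian descent method (gradient, conjugate gradient, or trust-region) monotonically decreases the objective along its iterates, which supplies the needed inequality; I would make this assumption explicit at the start of the proof so that the chain of inequalities is fully justified.
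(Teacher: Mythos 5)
Your proposal is correct and follows essentially the same route as the paper: the same two-inequality chain (descent of the sub-algorithm started at $x^{k-1}$, then property (\ref{AP1}) with $\mu_k=\theta\mu_{k-1}$) for strict decrease, the same pointwise bound $\tilde{f}(x,\mu)\geq f(x)$ derived from monotonicity in $\mu$ plus $\lim_{\mu\downarrow 0}\tilde{f}(x,\mu)=f(x)$ for the lower bound, and monotone convergence to conclude. The subtlety you flag — that $\tilde{f}(x^k,\mu_k)\leq\tilde{f}(x^{k-1},\mu_k)$ is implicit in ``starting at $x^{k-1}$'' rather than a stated hypothesis — is handled the same way in the paper, which simply asserts it (``at least, we have''), so making it explicit is a reasonable, if anything slightly more careful, presentation of the identical argument.
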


\begin{proof}
	For each $ k \geq 1 $, $x_{k}$ is obtained by approximately solving
	\begin{equation*}
		\min _{x \in \mathcal{M}} \tilde{f}(x, \mu_{k}),
	\end{equation*}
	starting at $x^{k-1}$.
	Then at least, we have
	\begin{equation*}
		\tilde{f}(x^{k-1}, \mu_{k}) \geq \tilde{f}(x^{k}, \mu_{k})=f^{k}.
	\end{equation*}
	Since $\mu_{k}=\theta \mu_{k-1} < \mu_{k-1} $, property (\ref{AP1}) ensures
	\begin{equation*}
		f^{k-1}=\tilde{f}(x^{k-1}, \mu_{k-1}) > \tilde{f}(x^{k-1}, \mu_{k}).
	\end{equation*}
	The claim that sequence $\{f^{k}\}$ is strictly decreasing follows from these two inequalities.\\
	\indent Suppose that, for all $\mu > 0$ and for all $x \in \mathbb{R}^n$,
	\begin{equation}\label{eq:big}
		\tilde{f}(x, \mu) \geq {f}(x).
	\end{equation}
	Then for each $ k $, 
	\begin{equation*}
		f^{k}=\tilde{f}(x^{k}, \mu_{k}) \geq {f}(x^{k}) \geq \inf _{x \in \mathcal{M}} f(x) = f^{*},
	\end{equation*}
	which proves our claims.\\
	\indent
	Now, we show (\ref{eq:big}) is true if the smoothing function has property (\ref{AP1}). Fix any $x \in \mathbb{R}^n$; (\ref{AP1}) implies that $ \tilde{f}(x, \cdot) $ is strictly decreasing as $ \mu \to 0.$ 
	Actually, $ \tilde{f}(x, \cdot) $ is monotonically increasing on $ \mu >0. $ 
	On the other hand, from the definition of the smoothing function, we have that
	\begin{equation*}
		\lim _{\mu \downarrow 0} \tilde{f}(x, \mu)=f(x).
	\end{equation*}
	Hence, we have
	$
	\inf _{\mu > 0} \tilde{f}(x, \mu)=f(x),
	$
	as claimed.
\end{proof}

Note that the above weak result does not ensure that $\{ f^k\} \rightarrow f^*$. Next, for better convergence (compared with Theorem \ref{thm:basic2}) and an effortless implementation (compared with Theorem \ref{thm:basic1}), we propose an enhanced Riemannian smoothing method: Algorithm \ref{alg:sm_NROP222}. This is closer to the version in \cite{zhang2021riemannian}, where the authors use the Riemannian steepest descent method for solving the smoothed problem (\ref{smp1}).

\begin{algorithm}
	\caption{Enhanced Riemannian smoothing method for (\ref{NROP})}
	\label{alg:sm_NROP222}
	\begin{algorithmic}
		\STATE{\textbf{Initialization:}}
		Given $\theta \in(0,1), \mu_{0}>0$ and a nonnegative sequence $\{\delta_{k}\}$ with $\delta_{k} \rightarrow 0$, and $x_{-1} \in \mathcal{M}$, select a smoothing function $\tilde{f}$ and a Riemannian algorithm (called sub-algorithm here) for (\ref{SROP}).
		
		\STATE{\textbf{for} $k=0,1,2, \ldots$}
		
		\quad Solve
		\begin{equation}\label{smp1}
			x_{k} = \arg\min _{x \in \mathcal{M}} \tilde{f}(x, \mu_{k})
		\end{equation}
		\quad approximately by using the chosen sub-algorithm, starting at $x^{k-1}$, such that
		\begin{equation}\label{1601}
			\|\operatorname{grad} \tilde{f}(x^{k}, \mu_{k})\| < \delta_{k}
		\end{equation}
		
		\quad \textbf{if}
		final convergence test is satisfied
		
		\qquad \textbf{stop} with approximate solution $x_{k}$
		
		\quad \textbf{end if}
		
		\quad Set $\mu_{k+1}=\theta \mu_{k}$
		
		\STATE{\textbf{end for}}
	\end{algorithmic}
\end{algorithm}

The following result is adapted from \cite[Proposition 4.2 \& Theorem 4.3]{zhang2021riemannian}. Readers are encouraged to refer to \cite{zhang2021riemannian} for a discussion on the stationary point associated with $\tilde{f}$ on $\mathcal{M}$.

\begin{theorem}\label{thm:c3}
	{In Algorithm \ref{alg:sm_NROP222},
	suppose that the chosen sub-algorithm has the following general convergence property for (\ref{SROP}):
	\begin{equation}\label{eq:weakconv}
		\liminf _{\ell \rightarrow \infty}\| \operatorname{grad} g(x^{\ell})\|=0.
	\end{equation}
	Moreover, suppose that, for all $ \mu_{k} $, the function $\tilde{f}(\cdot,\mu_{k})$ satisfies the convergence assumptions of the sub-algorithm needed for $ g $ above} and $\tilde{f}$ satisfies the Riemannian gradient sub-consistency on $ \mathcal{M} $. Then
	\begin{enumerate}
		\item For each $ k $, there exists an $ x^{k} $ satisfying (\ref{1601}); hence, Algorithm \ref{alg:sm_NROP222} is well defined.
		
		\item {Every limit point $x^{*}$ of the sequence $\{x^{k}\}$ generated by Algorithm \ref{alg:sm_NROP222} is a Riemannian limiting stationary point of (\ref{NROP}) (see (\ref{Rlsp})).} 
	\end{enumerate}
\end{theorem}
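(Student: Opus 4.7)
The plan is to tackle the two claims in the order they are stated, since part 1 establishes the well-definedness needed to even speak of the sequence $\{x^k\}$ in part 2. For part 1, I would fix $k$ and apply the chosen sub-algorithm to the smooth problem $\min_{x \in \mathcal{M}} \tilde{f}(x, \mu_k)$ with initial point $x^{k-1}$. Because $\tilde{f}(\cdot, \mu_k)$ is assumed to satisfy the convergence hypotheses required by the sub-algorithm, the inner sequence $\{y^\ell\}$ it produces obeys $\liminf_{\ell \to \infty} \|\operatorname{grad} \tilde{f}(y^\ell, \mu_k)\| = 0$. In particular, there must exist an index $\ell^{\star}$ with $\|\operatorname{grad} \tilde{f}(y^{\ell^{\star}}, \mu_k)\| < \delta_k$; setting $x^k := y^{\ell^{\star}}$ gives an iterate satisfying (\ref{1601}), and Algorithm \ref{alg:sm_NROP222} is therefore well defined.

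For part 2, let $x^{*}$ be a limit point of $\{x^k\}$ and fix a subsequence $\mathcal{K}$ along which $x^k \to x^{*}$. Because $\mu_{k+1} = \theta \mu_k$ with $\theta \in (0,1)$ and $\mu_0 > 0$, we have $\mu_k \downarrow 0$, and since $\delta_k \to 0$ by construction, (\ref{1601}) forces $\|\operatorname{grad} \tilde{f}(x^k, \mu_k)\| \to 0$ along $\mathcal{K}$. Thus the three conditions $x^k \in \mathcal{M}$, $x^k \to x^{*} \in \mathcal{M}$ (using completeness of $\mathcal{M}$ as a closed submanifold of $\mathbb{R}^n$), and $\mu_k \downarrow 0$ hold simultaneously, while $\operatorname{grad} \tilde{f}(x^k, \mu_k) \to 0$. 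From the definition of $G_{\tilde{f}, \mathcal{R}}(x^{*})$ this gives $0 \in G_{\tilde{f}, \mathcal{R}}(x^{*})$. Invoking the Riemannian gradient sub-consistency hypothesis yields $G_{\tilde{f}, \mathcal{R}}(x^{*}) \subseteq \partial_{\mathcal{R}} f(x^{*})$, hence $0 \in \partial_{\mathcal{R}} f(x^{*})$, which by (\ref{Rlsp}) is exactly the statement that $x^{*}$ is a Riemannian limiting stationary point of (\ref{NROP}).

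The main obstacle is really only bookkeeping rather than mathematics: one must be careful that the sub-algorithm's convergence guarantee (\ref{eq:weakconv}) applies at every outer iteration, which is precisely why the theorem explicitly posits that each $\tilde{f}(\cdot, \mu_k)$ meets those hypotheses. Once that is granted, the nontrivial analytical content — bridging $\operatorname{grad} \tilde{f}(x^k, \mu_k) \to 0$ to a subdifferential statement about the nonsmooth limit $f$ at $x^{*}$ — is outsourced entirely to Riemannian gradient sub-consistency, so the proof becomes a three-step packaging: (i) run the sub-algorithm to fulfil (\ref{1601}), (ii) combine $\delta_k \to 0$ with $\mu_k \to 0$ to drive the Riemannian gradient of $\tilde{f}$ to zero along $\mathcal{K}$, and (iii) quote the sub-consistency inclusion to land in $\partial_{\mathcal{R}} f(x^{*})$. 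No additional compactness, Lipschitz, or uniform-continuity arguments are needed beyond these.
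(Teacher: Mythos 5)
Your proposal is correct and follows essentially the same route as the paper's proof: part 1 via the $\liminf$ property of the sub-algorithm applied to $\tilde{f}(\cdot,\mu_k)$ to extract an iterate with gradient norm below $\delta_k$, and part 2 by combining $\delta_k \to 0$, $\mu_k \downarrow 0$, and the definition of $G_{\tilde{f},\mathcal{R}}(x^{*})$ with the sub-consistency inclusion to conclude $0 \in \partial_{\mathcal{R}} f(x^{*})$. No gaps; the argument matches the paper's in both structure and substance.
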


\begin{proof}
	Fix any $ \mu_{k} $. By (\ref{eq:weakconv}), we have $ \liminf _{\ell \rightarrow \infty}\| \operatorname{grad} \tilde{f}(x^{\ell},\mu_{k})\|=0 $. Hence, there is a convergent subsequence of $ \| \operatorname{grad} \tilde{f}(x^{\ell},\mu_{k})\| $ whose limit is 0. This means that, for any $\epsilon >0$, there exists an integer $ \ell_\epsilon$ such that $ \| \operatorname{grad} \tilde{f}(x^{\ell_\epsilon},\mu_{k})\| < \epsilon. $ If $ \epsilon = \delta_{k}$, we get $ x^{k}= x^{\ell_\epsilon} $. Thus, statement (1) holds.
	
	Next, suppose that $x^{*}$ is a limit point of $\{x^{k}\}$ generated by Algorithm \ref{alg:sm_NROP222}, so that there is an infinite subsequence $\mathcal{K}$ such that $ \lim _{k \in \mathcal{K}} x^{k}=x^{*}. $ From (1), we have
	\begin{equation*}
		\lim _{k \in \mathcal{K}} \|\operatorname{grad} \tilde{f}(x^{k}, \mu_{k})\|
		\leq \lim _{k \in \mathcal{K}} \delta_{k}=0,
	\end{equation*}
	and we find that $\operatorname{grad} \tilde{f}(x^{k}, \mu_{k}) \rightarrow 0$ for $k \in \mathcal{K}, x^{k} \in \mathcal{M}, x^{k} \rightarrow x^{*}, \mu_{k} \downarrow 0$. Hence,
	\begin{equation*}
		0 \in G_{\tilde{f}, \mathcal{R}}(x^{*}) \subseteq \partial_{\mathcal{R}} f(x^{*}).
	\end{equation*}
\end{proof}

Now let us consider the selection strategy of the nonnegative sequence $\{\delta_{k}\}$ with $\delta_{k} \rightarrow 0$. In \cite{zhang2021riemannian}, when $\mu_{k+1}=\theta \mu_{k}$ shrinks, the authors set 
\begin{equation}\label{par:zhang}
	\delta_{k+1}:=\rho\delta_{k}
\end{equation}
with an initial value of $\delta_{0}$ and constant $\rho\in(0,1)$. In the spirit of the usual smoothing methods described in \cite{chen2012smoothing}, one can set
\begin{equation}\label{par:wo}
	\delta_{k}:=\gamma \mu_{k}
\end{equation} with a constant $\gamma>0$. The latter is an adaptive rule, because $ \mu_{k} $ determines subproblem (\ref{smp1}) and its stopping criterion at the same time. The merits and drawbacks of the two rules require more discussion, but the latter seems to be more reasonable.

\vspace{5mm}

We conclude this section by discussing the connections with \cite{cambier2016robust} and \cite{zhang2021riemannian}. 
Our work is based on an efficient unification of them. 
\cite{cambier2016robust} focused on a specific algorithm and did not discuss the underlying generalities, whereas we studied a general framework for Riemannian smoothing. 
Recall that the ``smoothing function'' is the core tool of the smoothing method. 
In addition to what are required by its definition (see Definition \ref{defn:sm}), it needs to have the following ``additional properties'' (AP) in order for the algorithms to converge:
\begin{enumerate}[label=(AP\arabic*)]
	\setlength{\leftskip}{0.3cm}
	\item Approximate from above, i.e., (\ref{AP1}). (Needed in Algorithm 1)
	\item (Riemannian) gradient sub-consistency, i.e., Definition \ref{defn:sub-consistency}.
	(Needed in Algorithm 2)
\end{enumerate}

We find that not all smoothing functions satisfy (AP1) and for some functions it is hard to prove whether (AP2) holds. For example, all the functions in Table \ref{table:smlist} are smoothing functions of $ |x|$, but only the first three meet (AP1); the last two do not. In \cite{chen2012smoothing}, the authors showed that the first one in Table \ref{table:smlist}, $\tilde{f}_{1}(x, \mu)$, has property (AP2). The others remain to be verified, but doing so will not be a trivial exercise. To a certain extent, Algorithm \ref{alg:sm_NROP} as well as Theorem \ref{thm:basic2} guarantee a fundamental convergence result even if one has difficulty in showing whether one's smoothing function satisfies (AP2). Therefore, it makes sense to consider Algorithms \ref{alg:sm_NROP} and \ref{alg:sm_NROP222} together for the sake of the completeness of the general framework.

\begin{table}[!h]
	\renewcommand{\arraystretch}{1.5}
	\caption{List of smoothing functions of the absolute value function $ |x|$ with $ \kappa $ and $ \omega(\mu) $ in (\ref{defn:kappa})}
	\centering
	\label{table:smlist}
		\begin{tabular}{|l|c|c|}
			\hline
			& $ \kappa $ & $ \omega(\mu) $ \\ \hline
			$
			\tilde{f}_{1}(x,\mu)=\left\{\begin{array}{lll}
				|x| & \text { if } & |x|>\frac{\mu}{2} \\
				\frac{x^{2}}{\mu}+\frac{\mu}{4} & \text { if } & |x| \leq \frac{\mu}{2}
			\end{array}\right.$ 
			& $\frac{1}{4}$   & $\mu$   \\ \hline
			$ \tilde{f}_{2}(x,\mu)= \sqrt{\mu^{2}+x^{2}}$ & $1$    & $\mu$  \\ \hline
			$\tilde{f}_{3}(x, \mu)=2 \mu \log (1+e^{\frac{x}{\mu}})-x$ & $2 \log (2)$   & $\mu$   \\ \hline
			$\tilde{f}_{4}(x, \mu)=x \tanh (\frac{x}{\mu})$, where $\tanh (z)$ is the hyperbolic tangent function. & $ 1 $   & $\mu$   \\ \hline
			$\tilde{f}_{5}(x, \mu)=x \operatorname{erf}(\frac{x}{\mu})$, where $\operatorname{erf}(z):=\frac{2}{\sqrt{\pi}} \int_{0}^{z} e^{-t^{2}} d t$ is the Gauss error function. & $ \frac{2}{e \sqrt{\pi}} $   & $\mu$   \\ \hline
	\end{tabular}
\end{table}

Algorithm \ref{alg:sm_NROP222} expands on the results of \cite{zhang2021riemannian}. It allows us to use any standard method of (\ref{SROP}), not just steepest descent, to solve the smoothed problem (\ref{smp1}). 
Various standard Riemannian algorithms for (\ref{SROP}), such as 
the Riemannian conjugate gradient method \cite{sato2015new} (which often performs better than Riemannian steepest descent), 
the Riemannian Newton method \cite[Chapter 6]{absil2009optimization}, 
and the Riemannian trust region method \cite[Chapter 7]{absil2009optimization}, 
have extended the concepts and techniques used in Euclidean space to Riemannian manifolds. 
As shown by Theorem \ref{thm:c3}, no matter what kind of sub-algorithm is implemented for (\ref{smp1}), it does not affect the final convergence as long as the chosen sub-algorithm has property (\ref{eq:weakconv}).
On the other hand, we advocate that the sub-algorithm should be viewed as a ``Black Box'' and the user should not have to care about the code details of the sub-algorithm at all.
We can directly use an existing solver, e.g., Manopt \cite{manopt}, which includes the standard Riemannian algorithms mentioned above. Hence, we can choose the \emph{most suitable} sub-algorithm for the application and quickly implement it with minimal effort.

\section{Numerical experiments on CP factorization}\label{sec:ne}

The numerical experiments in Section \ref{sec:ne} and \ref{sec:comparison} were performed on a computer equipped with an Intel Core i7-10700 at 2.90GHz with 16GB of RAM using
Matlab R2022a. 
Our Algorithm \ref{alg:sm_NROP222} is implemented in the Manopt framework \cite{manopt} (version 7.0).
The number of iterations to solve the smoothed problem (\ref{smp1}) with the sub-algorithm is recorded in the total number of iterations.
We refer readers to the supplementary material of this paper for the available codes.\footnote{Alternatively, \url{https://github.com/GALVINLAI/General-Riemannian-Smoothing-Method}.}

In this section, we describe numerical experiments that we conducted on CP factorization in which we solved (\ref{OptCP}) using Algorithm \ref{alg:sm_NROP222}, where different Riemannian algorithms were employed as sub-algorithms and $\operatorname{lse}(-\bar{B}X,\mu)$ was used as the smoothing function. To be specific, we used three built-in Riemannian solvers of Manopt 7.0 --- steepest descent (SD), conjugate gradient (CG), and trust regions (RTR), denoted by {SM\_SD}, {SM\_CG} and {SM\_RTR}, respectively. We compared our algorithms with the following non-Riemannian numerical algorithms for CP factorization that were mentioned in subsection \ref{subs:rw}. We followed the settings used by the authors in their papers.

\begin{itemize}
	\item SpFeasDC\_ls \cite{chen2020difference}: A difference-of-convex functions approach for solving the split feasibility problem, it can be applied to (\ref{FeasCP}). The implementation details regarding the parameters we used are the same as in the numerical experiments reported in \cite[Section 6.1]{chen2020difference}.
	
	\item RIPG\_mod \cite{boct2021factorization}: This is a projected gradient method with relaxation and inertia parameters for solving (\ref{bot}). As shown in \cite[Section 4.2]{boct2021factorization}, RIPG\_mod is the best among the many strategies of choosing parameters.
	
	\item APM\_mod \cite{groetzner2020factorization}: A modified alternating projection method for CP factorization; it is described in Section \ref{subs:approaches}.
\end{itemize}
We have shown that $\operatorname{lse}(x,\mu)$ is a smoothing function of $\max (x)$ with gradient consistency. 
$\operatorname{lse}(\cdot, \mu)$ of the matrix argument can be simply derived from entrywise operations.
Then from the properties of compositions of smoothing functions \cite[Proposition 1 (3)]{bian2013neural}, we have that $\operatorname{lse}(-\bar{B}X,\mu)$ is a smoothing function of $\max(-\bar{B}X)$ with gradient consistency.
In practice, it is important to avoid numerical overflow and underflow when evaluating $\operatorname{lse}(x,\mu)$.
Overflow occurs when any $ x_i $ is large and underflow occurs when all $ x_i $ are small. To avoid these problems, we can shift each component $ x_i $ by $\max(x) $ and use the following formula:
\begin{equation*}
	\operatorname{lse}(x,\mu) =
	\mu \log (\textstyle \sum_{i=1}^{n} \exp ((x_{i}-\max (x))/{\mu} ) ) + \max (x),
\end{equation*}
whose validity is easy to show.

The details of the experiments are as follows.
If $ A \in \mathcal{CP}_{n}$ was of full rank, for accuracy reasons, we obtained an initial $ \bar{B} $ by using Cholesky decomposition. Otherwise, $ \bar{B} $ was obtained by using spectral decomposition.
Then we extended $ \bar{B} $ to $ r $ columns by column replication (\ref{col_rep}). We set $ r = \operatorname{cp}(A) $ if $\operatorname{cp}(A)$ was known or $r$ was sufficiently large.
We used \verb|RandOrthMat.m| \cite{RandOrthMat} to generate a random starting point $ X^0$ on the basis of the Gram-Schmidt process.

For our three algorithms, we set $ \mu_{0}=100, \theta=0.8$ and used an adaptive rule (\ref{par:wo}) of $\delta_{k}:=\gamma \mu_{k}$ with $\gamma=0.5 $. Except for RIPG\_mod, all the algorithms terminated successfully at iteration $k$, where $\min (\bar{B}X^{k}) \geq -10^{-15} $ was attained before the maximum number of iterations (5,000) was reached. In addition, SpFeasDC\_ls failed when $\bar{L}_{k}>10^{10}$. Regarding RIPG\_mod, it terminated successfully when $\|A-X_{k} X_{k}^{\top}\|^{2}/\|A\|^{2}<10^{-15}$ was attained before at most 10,000 iterations for $n<100$, and before at most 50,000 iterations in all other cases.
In the tables of this section, we report the rounded success rate (Rate) over the total number of trials, although the definitions of ``Rate'' in the different experiments (described in Sections 4.1-4.4) vary slightly from one experiment to the other. We will describe them later.

\subsection{Randomly generated instances}\label{subnerand}

We examined the case of randomly generated matrices to see how the methods were affected by the order $n$ or $r$. The instances were generated in the same way as in \cite[Section 7.7]{groetzner2020factorization}. We computed $ C $ by setting $C_{i j}:=\lvert B_{i j}\rvert $ for all $i, j,$ where $ B $ is a random $ n \times 2n $ matrix based on the Matlab command \texttt{randn}, and we took $A=C C^{\top}$ to be factorized. In Table \ref{test_randomCP}, we set $r=1.5 n$ and $r=3 n$ for the values $n \in \{20,30,40,100,200,400,600,800\}$.
For each pair of $ n $ and $ r $, we generated 50 instances if $ n \leq 100 $ and 10 instances otherwise. 
For each instance, we initialized all the algorithms at the same random starting point $ X^{0} $ and initial decomposition $ \bar{B} $, except for RIPG\_mod. Note that each instance $ A $ was assigned only one starting point. 

Table \ref{test_randomCP} lists the average time in seconds (Time$_{\mathrm{s}}$) and the average number of iterations (Iter$_{\mathrm{s}}$) among the successful instances. For our three Riemannian algorithms, Iter$_{\mathrm{s}}$ contains the number of iterations of the sub-algorithm. {Table \ref{test_randomCP} also lists the rounded success rate (Rate) over the total number (50 or 10) of instances for each pair of $ n $ and $ r $. Boldface highlights the two best results in each row.}

As shown in Table \ref{test_randomCP}, except for APM\_mod, each method had a success rate of 1 for all pairs of $ n $ and $ r $. Our three algorithms outperformed the other methods on the large-scale matrices with $ n \geq 100 $. 
In particular, SM\_CG with the conjugate-gradient method gave the best results.
\subsection{A specifically structured instance}

Let $\textbf{e}_{n}$ denote the all-ones vector in $\mathbb{R}^{n}$ and consider the matrix \cite[Example 7.1]{groetzner2020factorization},
\begin{equation*}
	A_{n} =\left(\begin{array}{cc}
		{0} & {\textbf{e}_{n-1}^{\top}} \\
		{\textbf{e}_{n-1}} & {I_{n-1}}
	\end{array}\right)^{\top}\left(\begin{array}{cc}
		{0} & {\textbf{e}_{n-1}^{\top}} \\
		{\textbf{e}_{n-1}} & {I_{n-1}}
	\end{array}\right) \in \mathcal{CP}_{n}.
\end{equation*}
Theorem \ref{thm:int_cp} shows that $A_{n} \in \operatorname{int}(\mathcal{CP}_{n})$ for every $n \geq 2.$  
By construction, it is obvious that $\operatorname{cp}(A_{n})=n$. We tried to factorize $A_{n}$ for the values $n \in \{10,20,50,75,100,150\}$ in Table \ref{test_specialCP}. 
For each $A_{n}$, {using $ r=\operatorname{cp}(A_{n})=n $} and the same initial decomposition $\bar{B}$, we tested all the algorithms on the same 50 randomly generated starting points, except for RIPG\_mod. Note that each instance was assigned 50 starting points. 

Table \ref{test_specialCP} lists the average time in seconds (Time$_{\mathrm{s}}$) and the average number of iterations (Iter$_{\mathrm{s}}$) among the successful starting points. 
{It also lists the rounded success rate (Rate) over the total number (50) of starting points for each $ n $.} 
Boldface highlights the two best results for each $ n $. 

We can see from Table \ref{test_specialCP} that the success rates of our three algorithms were always 1, whereas the success rates of the other methods decreased as $ n $ increased. 
Likewise, SM\_CG with the conjugate-gradient method gave the best results.

\begin{landscape}
	\begin{table}
		\small
		\caption{CP factorization of random completely positive matrices.} 
		\centering
		\label{test_randomCP}
		
		\begin{tabular}{l|lll|lll|lll|lll|lll|lll}
			\hline \hline
			\multicolumn{1}{l|}{Methods} & 		\multicolumn{3}{l|}{SM\_SD} &
			\multicolumn{3}{l|}{SM\_CG} & \multicolumn{3}{l|}{SM\_RTR} & \multicolumn{3}{l|}{SpFeasDC\_ls} & \multicolumn{3}{l|}{RIPG\_mod} & \multicolumn{3}{l}{APM\_mod} 
			\\ 
			\hline 
			
			{$n (r=1.5n)$} &
			Rate & Time$_{\mathrm{s}}$ & Iter$_{\mathrm{s}}$ & Rate & Time$_{\mathrm{s}}$ & Iter$_{\mathrm{s}}$ &
			Rate & Time$_{\mathrm{s}}$ & Iter$_{\mathrm{s}}$ & Rate & Time$_{\mathrm{s}}$ & Iter$_{\mathrm{s}}$ & Rate & Time$_{\mathrm{s}}$ & Iter$_{\mathrm{s}}$ & Rate & Time$_{\mathrm{s}}$ & Iter$_{\mathrm{s}}$ \\ 
			\hline
			
			20 & 
			1 & 0.0409 & 49 & 
			1 & 0.0394 & 41 & 
			1 & 0.0514 & 35 & 
			1 & \textbf{0.0027} & 24 & 
			1 & \textbf{0.0081} & 1229 & 
			0.32 & 0.3502 & 2318 \\
			
			30 & 
			1 & 0.0549 & 58 &
			1 & 0.0477 & 44 & 
			1 & 0.0690 & 36 & 
			1 & \textbf{0.0075} & 24 & 
			1 & \textbf{0.0231} & 1481 & 
			0.04 & 1.0075 & 2467 \\

			40 & 
			1 & 0.0735 & 65 &
			1 & 0.0606 & 46 & 
			1 & 0.0859 & 37 & 
			1 & \textbf{0.0216} & 46 & 
			1 & \textbf{0.0574} & 1990 & 
			0 & - & - \\

			100 & 
			1 & 0.2312 & 104 &
			1 & \textbf{0.1520} & 56 & 
			1 & 0.4061 & 45 & 
			1 & \textbf{0.2831} & 109 &
			1 & 0.8169 & 4912 & 
			0 & - & - \\
			
			200 & 
			1 & \textbf{1.0723} & 167 &
			1 & \textbf{0.5485} & 69 & 
			1 & 1.9855 & 53 & 
			1 & 2.2504 & 212 & 
			1 & 5.2908 & 9616 & 
			0 & - & - \\
			
			400 & 
			1 & \textbf{14.6}  & 314 &
			1 & \textbf{4.1453} & 86 & 
			1 & 22.1 & 69 & 
			1 & 36.9 & 636 & 
			1 & 90.6 & 17987 & 
			0 & - & - \\
			
			600 & 
			1 & 50.6 & 474 &
			1 & \textbf{14.7} & 105 & 
			1 & \textbf{46.4} & 80 & 
			1 & 140.1 & 882 & 
			1 & 344.7 & 26146 & 
			0 & - & - \\
			
			800 &  
			1 & 133.3 & 643 &
			1 & \textbf{30.0} & 109 & 
			1 & \textbf{93.5} & 89 & 
			1 & 413.3 & 1225 & 
			1 & 891.1 & 34022 & 
			0 & - & - 
			\\ 
			\hline
			
			\multicolumn{1}{l|}{Methods} & 		\multicolumn{3}{l|}{SM\_SD} &
			\multicolumn{3}{l|}{SM\_CG} & \multicolumn{3}{l|}{SM\_RTR} & \multicolumn{3}{l|}{SpFeasDC\_ls} & \multicolumn{3}{l|}{RIPG\_mod} & \multicolumn{3}{l}{APM\_mod} 
			\\
			\hline
			
			{$ n (r=3n)$} & 
			Rate & Time$_{\mathrm{s}}$ & Iter$_{\mathrm{s}}$ &
			Rate & Time$_{\mathrm{s}}$ & Iter$_{\mathrm{s}}$ & Rate & Time$_{\mathrm{s}}$ & Iter$_{\mathrm{s}}$ & Rate & Time$_{\mathrm{s}}$ &Iter$_{\mathrm{s}}$ & Rate & Time$_{\mathrm{s}}$ & Iter$_{\mathrm{s}}$ & Rate & Time$_{\mathrm{s}}$  & Iter$_{\mathrm{s}}$ \\ 
			\hline
			
			20 & 
			1 & 0.0597 & 52 &
			1 & 0.0551 & 42 & 
			1 & 0.0842 & 37 & 
			1 & \textbf{0.0057} & 15 & 
			1 & \textbf{0.0105} & 1062 & 
			0.30 & 0.7267 & 2198 \\
			
			30 & 
			1 & 0.0793 & 59 &
			1 & 0.0673 & 45 & 
			1 & 0.1161 & 39 & 
			1 & \textbf{0.0128} & 17 & 
			1 & \textbf{0.0336} & 1127 & 
			0 & - & - \\
			
			40 & 
			1 & 0.1035 & 67 &
			1 & 0.0882 & 48 & 
			1 & 0.1961 & 41 & 
			1 & \textbf{0.0256} & 19 & 
			1 & \textbf{0.0822} & 1460 & 
			0 & - & - \\
			
			100 &
			1 & \textbf{0.5632} & 103 &
			1 & \textbf{0.3395} & 57 & 
			1 & 1.4128 & 50 & 
			1 & 0.8115 & 86 & 
			1 & 1.1909 & 4753 & 
			0 & - & - \\
			
			200 & 
			1 & \textbf{4.5548} & 163 &
			1 & \textbf{2.4116} & 68 & 
			1 & 14.3 & 65 & 
			1 & 8.1517 & 184 & 
			1 & 9.2248 & 9402 & 
			0 & - & - \\

			400 & 
			1 & \textbf{46.5} & 296 &
			1 & \textbf{19.2} & 89 & 
			1 & 77.1 & 80 & 
			1 & 124.3 & 453 & 
			1 & 156.6 & 17563 & 
			0 & - & - \\
			
			600 & 
			1 & \textbf{209.0} & 446 &
			1 & \textbf{65.7} & 99 & 
			1 & 294.5 & 76 & 
			1 & 981.8 & 795 & 
			1 & 616.7 & 25336 & 
			0 & - & - \\
			
			800 & 
			1 & \textbf{609.7} & 628 &
			1 & \textbf{160.4} & 114 & 
			1 & 650.7 & 83 & 
			1 & 4027.4 & 1070 & 
			1 & 1289.4  & 26820 & 
			0 & - & - 
			\\
			\hline
		\end{tabular}
	\end{table}
	
	\begin{table}
		\small
		\caption{CP factorization of a family of specifically structured instances.} 
		\centering
		\label{test_specialCP}
		\begin{tabular}{l|lll|lll|lll|lll|lll|lll}
			\hline \hline
			Methods &
			\multicolumn{3}{l|}{SM\_SD} &
			\multicolumn{3}{l|}{SM\_CG} & \multicolumn{3}{l|}{SM\_RTR} & \multicolumn{3}{l|}{{SpFeasDC\_ls}} & \multicolumn{3}{l|}{{RIPG\_mod}} & \multicolumn{3}{l}{{APM\_mod}}  
			\\ 
			\hline
			
			{$ n (r=n)$} & Rate & Time$_{\mathrm{s}}$ & Iter$_{\mathrm{s}}$ & Rate & Time$_{\mathrm{s}}$ &
			Iter$_{\mathrm{s}}$ & Rate & Time$_{\mathrm{s}}$ & Iter$_{\mathrm{s}}$ & Rate & Time$_{\mathrm{s}}$ & Iter$_{\mathrm{s}}$ & Rate & Time$_{\mathrm{s}}$ & Iter$_{\mathrm{s}}$ & Rate & Time$_{\mathrm{s}}$ & Iter$_{\mathrm{s}}$ 
			\\ 
			\hline
			
			10 & 
			1 & 0.0399 & 71 & 
			1 & 0.0313 & 49 & 
			1 & 0.0424 & 45 & 
			1 & \textbf{0.0043} & 149 & 
			1 & \textbf{0.0074} & 2085 & 
			0.80 & 0.0174 & 616 \\
			
			20 & 
			1 & 0.0486 & 85 & 
			1 & 0.0408 & 63 & 
			1 & 0.0637 & 55 & 
			0.98 & \textbf{0.0139} & 201 & 
			0.74 & \textbf{0.0212} & 3478 & 
			0.90 & 0.0591 & 864 \\
			
			50 & 
			1 & 0.2599 & 295 & 
			1 & \textbf{0.1073} & 101 & 
			1 & \textbf{0.2104} & 76 & 
			0.98 & 0.3389 & 770 & 
			0 & - & - & 
			0.76 & 0.6948 & 1416 \\
			
			75 & 
			1 & \textbf{0.3843} & 329 & 
			1 & \textbf{0.1923} & 135 & 
			1 & 0.4293 & 93 & 
			0.98 & 1.0706 & 1186 & 
			0 & - & - & 
			0.64 & 1.4809 & 1510 \\
			
			100 & 
			1 & \textbf{0.7459} & 458 & 
			1 & \textbf{0.3289} & 168 & 
			1 & 0.9074 & 108 & 
			0.80 & 1.6653 & 1083 & 
			0 & - & - & 
			0.60 & 2.8150 & 1690 \\
			
			150 & 
			1 & \textbf{1.8076} & 647 & 
			1 & \textbf{0.7837} & 241 & 
			1 & 2.6030 & 145 & 
			0.70 & 3.7652 & 1170 & 
			0 & - & - & 
			0.35 & 9.9930 & 2959 
			\\ 
			\hline
		\end{tabular}
	\end{table}
\end{landscape}

\subsection{An easy instance on the boundary of $\mathcal{CP}_n$}

Consider the following matrix from \cite[Example 2.7]{so2015simple}:
\begin{equation*}
	A=\left(\begin{array}{ccccc}
		41 & 43 & 80 & 56 & 50 \\
		43 & 62 & 89 & 78 & 51 \\
		80 & 89 & 162 & 120 & 93 \\
		56 & 78 & 120 & 104 & 62 \\
		50 & 51 & 93 & 62 & 65
	\end{array}\right).
\end{equation*}
The sufficient condition from \cite[Theorem 2.5]{so2015simple} ensures that this matrix is completely positive and $ \operatorname{cp}(A)= \operatorname{rank}(A)=3.$ Theorem \ref{thm:int_cp} tells us that $ A \in \operatorname{bd}(\mathcal{C P}_5) $, since $ \operatorname{rank}(A) \neq 5 $. 

We found that all the algorithms could easily factorize this matrix. 
However, our three algorithms returned a CP factorization $ B $ whose smallest entry was as large as possible. 
In fact, they also maximized the smallest entry in the $ n \times r $ symmetric factorization of $ A $, since (\ref{OptCP}) is equivalent to
\begin{equation*}
	\max _{A=XX^{\top}, X \in \mathbb{R}^{ n \times r }} \{\min \:(X)\}.
\end{equation*}
When we did not terminate as soon as $\min (\bar{B}X^{k}) \geq -10^{-15} $, for example, after 1000 iterations, our algorithms gave the following CP factorization whose the smallest entry is around $2.8573 \gg -10^{-15}$:
\begin{equation*}
	A=B B^{\top} \text { , where } B \approx\left(\begin{array}{lll}
		3.5771 & 4.4766 & \textbf{2.8573} \\
		2.8574 & 3.0682 & 6.6650 \\
		8.3822 & 7.0001 & 6.5374 \\
		5.7515 & 2.8574 & 7.9219 \\
		2.8574 & 6.7741 & 3.3085
	\end{array}\right).
\end{equation*}

\subsection{A hard instance on the boundary of $\mathcal{CP}_n$}
\label{subnebd}

Next, we examined how well these methods worked on a hard matrix on the boundary of $\mathcal{CP}_n$. 
Consider the following matrix on the boundary taken from \cite{dur2008interior}:
\begin{equation*}A=\left(\begin{array}{lllll}
		8 & 5 & 1 & 1 & 5 \\
		5 & 8 & 5 & 1 & 1 \\
		1 & 5 & 8 & 5 & 1 \\
		1 & 1 & 5 & 8 & 5 \\
		5 & 1 & 1 & 5 & 8
	\end{array}\right) \in \operatorname{bd} (\mathcal{CP}_{5}).
\end{equation*}

Since $A \in \mathrm{bd}(\mathcal{C P}_{5})$ and $ A $ is of full rank, it follows from Theorem \ref{thm:int_cp} that $\operatorname{cp}^{+}(A) = \infty$; i.e., there is no strictly positive CP factorization for $ A. $ Hence, the global minimum of (\ref{OptCP}), $ t=0 $, is clear. None of the algorithms could decompose this matrix under our tolerance, $ 10^{-15} $, in the stopping criteria. As was done in \cite[Example 7.3]{groetzner2020factorization}, we investigated slight perturbations of this matrix. Given
\begin{equation*} M M^{\top}=:C \in \operatorname{int}(\mathcal{C P}_{5}) \text { with } M=\left(\begin{array}{cccccc}
		1 & 1 & 0 & 0 & 0 & 0 \\
		1 & 0 & 1 & 0 & 0 & 0 \\
		1 & 0 & 0 & 1 & 0 & 0 \\
		1 & 0 & 0 & 0 & 1 & 0 \\
		1 & 0 & 0 & 0 & 0 & 1
	\end{array}\right),
\end{equation*}
we factorized $A_{\lambda}:=\lambda A+(1-$ $\lambda) C$ for different values of $\lambda \in[0,1)$ using $r= 12 > \mathrm{cp}_{5}=11.$ Note that $A_{\lambda} \in \operatorname{int}(C \mathcal{P}_{5})$ provided $0 \leq \lambda<1$ and $ A_{\lambda} $ approached the boundary as $ \lambda \to 1 $. We chose the largest $ \lambda = 0.9999$. 
{For each $A_{\lambda},$ we tested all of the algorithms on 50 randomly generated starting points and computed the success rate over the total number of starting points.}

Table \ref{test_bdint_plot} shows how the success rate of each algorithm changes as $ A_{\lambda} $ approaches the boundary. The table sorts the results from left to right according to overall performance. Except for SM\_RTR, whose success rate was always 1, the success rates of all the other algorithms significantly decreased as $ \lambda $ increased to 0.9999. Surprisingly, the method of SM\_CG, which performed well in the previous examples, seemed unable to handle instances close to the boundary.

\begin{table}[!t]
	\small
	\caption{Success rate of CP factorization of $A_{\lambda}$ for values of $\lambda$ from 0.6 to 0.9999.} 
	\centering
	\label{test_bdint_plot}
	\begin{tabular}{l|l|l|l|l|l|l}
		\hline
		$\lambda$ & SM\_RTR & SM\_SD & RIPG\_mod & SM\_CG & SpFeasDC\_ls & APM\_mod \\ \hline 
		0.6    & 1       & 1      & 1& 1      & 1   & 0.42     \\
		0.65   & 1       & 1      & 1& 1      & 1   & 0.44     \\
		0.7    & 1       & 1      & 1& 1      & 1   & 0.48     \\
		0.75   & 1       & 1      & 1& 1      & 1   & 0.52     \\
		0.8    & 1       & 1      & 1& 1      & 0.96& 0.46     \\ \hline
		0.82   & 1       & 1      & 1& 1      & 0.98& 0.4      \\
		0.84   & 1       & 1      & 1& 1      & 0.86& 0.24     \\
		0.86   & 1       & 1      & 1& 1      & 0.82& 0.1      \\
		0.88   & 1       & 1      & 1& 1      & 0.58& 0.18     \\
		0.9    & 1       & 1      & 1& 1      & 0.48& 0.18     \\ \hline
		0.91   & 1       & 1      & 1& 1      & 0.4 & 0.14     \\
		0.92   & 1       & 1      & 1& 1      & 0.2 & 0.18     \\
		0.93   & 1       & 1      & 0.98      & 1      & 0.22& 0.22     \\
		0.94   & 1       & 1      & 0.98      & 1      & 0.1 & 0.2      \\
		0.95   & 1       & 1      & 1& 1      & 0.12& 0.32     \\
		0.96   & 1       & 1      & 0.96      & 0.98   & 0.06& 0.34     \\
		0.97   & 1       & 1      & 0.86      & 0.82   & 0.06& 0.14     \\
		0.98   & 1       & 1      & 0.76      & 0.28   & 0.02& 0        \\
		0.99   & 1       & 0.68   & 0.42      & 0      & 0   & 0        \\ \hline
		0.999  & 1       & 0      & 0.14      & 0      & 0   & 0        \\
		0.9999 & 1       & 0      & 0& 0      & 0   & 0  \\ \hline     
	\end{tabular}
\end{table}

\section{Further numerical experiments: comparison with \cite{cambier2016robust,zhang2021riemannian}}
\label{sec:comparison}

As described at the end of Section \ref{sec:sm}, the algorithms in \cite{zhang2021riemannian} and \cite{cambier2016robust} are both special cases of our algorithm. 
In this section, we compare them to show whether it performs better when we use other sub-algorithms or other smoothing functions. 
We applied Algorithm \ref{alg:sm_NROP222} to two problems: finding a sparse vector (FSV) in a subspace and robust low-rank matrix completion, which are problems implemented in \cite{zhang2021riemannian} and \cite{cambier2016robust}, respectively. Since they both involve approximations to the $ \ell_{1} $ norm, we applied the smoothing functions listed in Table \ref{table:smlist}.

We used the \emph{six} solvers built into Manopt 7.0, namely, 
steepest descent; 
Barzilai-Borwein (i.e., gradient-descent with BB step size); 
Conjugate gradient; 
trust regions; 
BFGS (a limited-memory version); 
ARC (i.e., adaptive regularization by cubics).

\subsection{FSV problem}

The FSV problem is to find the sparsest vector in an $n$-dimensional linear subspace $W\subseteq \mathbb{R}^{m}$; it has applications in robust subspace recovery, dictionary learning, and many other problems in machine learning and signal processing \cite{qu2014finding,qu2020finding}. 
Let $Q \in \mathbb{R}^{m \times n}$ denote a matrix whose columns form an orthonormal basis of $W$: this problem can be formulated as
\begin{equation*}
	\min_{ x \in S^{n-1}} \|Q x\|_{0},
\end{equation*}
where $S^{n-1}:=\left\{x \in \mathbb{R}^{n}\mid\|x\|=1\right\}$ is the sphere manifold, and $\|z\|_{0}$ counts the number of nonzero components of $z$.
Because this discontinuous objective function is unwieldy, in the literature, one instead focuses on solving the $\ell_{1}$ norm relaxation given below:
\begin{equation}\label{fsv}
		\min_{ x \in S^{n-1}} \|Q x\|_{1},
\end{equation}
where $\|z\|_{1}:=\sum_{i}\left|z_{i}\right|$ is the $\ell_{1}$ norm of the vector $z$.

Our synthetic problems of the $\ell_{1}$ minimization model (\ref{fsv}) were generated in the same way as in \cite{zhang2021riemannian}: i.e., we chose $m \in\{4 n, 6 n, 8 n, 10 n\}$ for $n=5$ and $m \in\{6 n, 8 n, 10 n, 12n\}$ for $n=10$. We defined a sparse vector $e_{n}:=(1, \ldots, 1,0, \ldots, 0)^{\top} \in \mathbb{R}^{m}$, whose first $n$ components are 1 and the remaining components are 0. Let the subspace $W$ be the span of $e_{n}$ and some $n-1$ random vectors in $\mathbb{R}^{m}$. By \texttt{mgson.m} \cite{mgson}, we generated an orthonormal basis of $W$ to form a matrix $Q \in \mathbb{R}^{m \times n}$. With this construction, the minimum value of $\|Q x\|_{0}$ should be equal to $n$. We chose the initial points by using the \texttt{M.rand()} tool of Manopt 7.0 that returns a random point on the manifold \texttt{M} and set \texttt{x0 = abs(M.rand())}. The nonnegative initial point seemed to be better in the experiment. Regarding the the settings of our Algorithm \ref{alg:sm_NROP222}, we chose the same smoothing function $\tilde{f}_{1}(x, \mu)$ in Table \ref{table:smlist} and the same gradient tolerance strategy (\ref{par:zhang}) as in \cite{zhang2021riemannian}: $\mu_{0}=1, \theta=0.5, \delta_{0}=0.1, \rho=0.5.$ We compared the numerical performances when using different sub-algorithms. Note that with the choice of the steepest-descent method, our Algorithm \ref{alg:sm_NROP222} is exactly the same as the one in \cite{zhang2021riemannian}.

For each $(n, m)$, we generated 50 pairs of random instances and random initial points. We claim that an algorithm successfully terminates if $\|Q x_{k}\|_{0}=n$, where $x_{k}$ is the $ k $-th iteration. Here, when we count the number of nonzeros of $Q x_{k}$, we truncated the entries as
\begin{equation}
	(Q x_{k})_{i}=0 \quad \text { if } \left|(Q x_{k})_{i}\right|<\tau,
\end{equation}
where $\tau>0$ is a tolerance related to the precision of the solution, taking values from $ 10^{-5} $ to $ 10^{-12} $. Tables \ref{table_fsv_5} and \ref{table_fsv_10} report the number of successful cases out of 50 cases. Boldface highlights the best result for each row.

\begin{table}[!t]
	\small
	\caption{Number of successes from 50 pairs of random instances and random initial points for the $\ell_{1}$ minimization model (\ref{fsv}) and $n=5$.}
	\centering
	\label{table_fsv_5}
	\begin{tabular}{c|c|c|c|c|c|c|c}
		\hline
		$(n, m)$       &$ \tau $    & \multicolumn{1}{c|}{\begin{tabular}[c]{@{}c@{}}Steepest-\\ descent\end{tabular}} & \multicolumn{1}{c|}{\begin{tabular}[c]{@{}c@{}}Barzilai-\\ Borwein\end{tabular}} & \multicolumn{1}{c|}{\begin{tabular}[c]{@{}c@{}}Conjugate-\\ gradient\end{tabular}} & \multicolumn{1}{c|}{\begin{tabular}[c]{@{}c@{}}Trust-\\ regions\end{tabular}} & \multicolumn{1}{c|}{BFGS} & \multicolumn{1}{c}{ARC} \\ \hline
		$(5,20)$& $10^{-5}$ & 21  & 19  & 0     & 22  & \textbf{23}     & \textbf{23}  \\
		& $10^{-6}$ & 21  & 19  & 0     & 22  & \textbf{23}     & \textbf{23}  \\
		& $10^{-7}$ & 21  & 19  & 0     & 22  & \textbf{23}     & \textbf{23}  \\
		& $10^{-8}$ & 16  & 19  & 0     & 22  & \textbf{23}     & \textbf{23}  \\ \hline
		$(5,30)$ & $10^{-5}$ & 36  & \textbf{42}  & 0     & 34  & 36     & 35  \\
		& $10^{-6}$ & 36  & \textbf{42}  & 0     & 34  & 36     & 35  \\
		& $10^{-7}$ & 36  & \textbf{42}  & 0     & 34  & 36     & 35  \\
		& $10^{-8}$ & 34  & \textbf{42}  & 0     & 34  & 36     & 35  \\ \hline
		$(5,40)$& $10^{-5}$ & 44  & \textbf{47}  & 1     & 44  & \textbf{47}     & 45  \\
		& $10^{-6}$ & 44  & \textbf{47}  & 0     & 44  & \textbf{47}     & 45  \\
		& $10^{-7}$ & 44  & \textbf{47}  & 0     & 44  & \textbf{47}     & 45  \\
		& $10^{-8}$ & 43  & \textbf{47}  & 0     & 44  & \textbf{47}     & 45  \\ \hline
		$(5,50)$& $10^{-5}$ & \textbf{47}  & \textbf{47}  & 2     & 45  & 45     & 45  \\
		& $10^{-6}$ & \textbf{47}  & \textbf{47}  & 2     & 45  & 45     & 45  \\
		& $10^{-7}$ & \textbf{47}  & \textbf{47}  & 0     & 45  & 45     & 45  \\
		& $10^{-8}$ & 46  & \textbf{47}  & 0     & 45  & 45     & 45  \\
		\hline \hline
		$(n, m)$       &$ \tau $    & \multicolumn{1}{c|}{\begin{tabular}[c]{@{}c@{}}Steepest-\\ descent\end{tabular}} & \multicolumn{1}{c|}{\begin{tabular}[c]{@{}c@{}}Barzilai-\\ Borwein\end{tabular}} & \multicolumn{1}{c|}{\begin{tabular}[c]{@{}c@{}}Conjugate-\\ gradient\end{tabular}} & \multicolumn{1}{c|}{\begin{tabular}[c]{@{}c@{}}Trust-\\ regions\end{tabular}} & \multicolumn{1}{c|}{BFGS} & \multicolumn{1}{c}{ARC} \\ \hline
		$(5,20)$& $10^{-9}$ &	0   & 19  & 0     & 22  & \textbf{23}     & \textbf{23}  \\
		& $10^{-10}$ &	0   & 19  & 0     & 22  & \textbf{23}     & \textbf{23}  \\
		& $10^{-11}$ &	0   & 19  & 0     & 22  & \textbf{23}     & 19  \\
		& $10^{-12}$ &	0   & 18  & 0     & \textbf{22}  & \textbf{22}     & 17  \\ \hline
		$(5,30)$& $10^{-9}$ &	8   & \textbf{42} & 0     & 34  & 36     & 35  \\
		& $10^{-10}$  &	1   & \textbf{42} & 0     & 34  & 36     & 35  \\
		& $10^{-11}$  &	0   & \textbf{42} & 0     & 34  & 36     & 33  \\
		& $10^{-12}$  &	0   & \textbf{42} & 0     & 34  & 34     & 29  \\ \hline
		$(5,40)$& $10^{-9}$ &	3   & \textbf{47}  & 0     & 44  & \textbf{47}     & 45  \\
		& $10^{-10}$ &	2   & \textbf{47}  & 0     & 44  & \textbf{47}     & 45  \\
		& $10^{-11}$ &	1   & \textbf{47}  & 0     & 44  & \textbf{47}     & 44  \\
		& $10^{-12}$ &	0   & \textbf{46}  & 0     & 44  & 44     & 36  \\ \hline
		$(5,50)$& $10^{-9}$ &	5   & \textbf{47}  & 0   & 45  & 45  & 45  \\
		& $10^{-10}$ &	2   & \textbf{47}  & 0   & 45  & 45  & 45  \\
		& $10^{-11}$ &	0   & \textbf{47}  & 0   & 45  & 45  & 45  \\
		& $10^{-12}$ &	0   & \textbf{47}  & 0   & 45  & 45  & 37  \\ \hline
	\end{tabular}
\end{table}

As shown in Table \ref{table_fsv_5} and \ref{table_fsv_10}, surprisingly, the conjugate-gradient method, which performed best on the CP factorization problem in Section \ref{sec:ne}, performed worst on the FSV problem. In fact, it was almost useless. Moreover, although the steepest-descent method employed in \cite{zhang2021riemannian} was not bad at obtaining low-precision solutions with $ \tau \in \{ 10^{-5}, 10^{-6}, 10^{-7}, 10^{-8}\} $, it had difficulty obtaining high-precision solutions with $ \tau \in \{ 10^{-9}, 10^{-10}, 10^{-11}, 10^{-12}\} $. The remaining four sub-algorithms easily obtained high-precision solutions, with the Barzilai-Borwein method performing the best in most occasions. Combined with the results in Section \ref{sec:ne}, this shows that in practice, the choice of sub-algorithm in the Riemannian smoothing method (Algorithm \ref{alg:sm_NROP222}) is highly problem-dependent. For the other smoothing functions in Table \ref{table:smlist}, we obtained similar results as in Table \ref{table_fsv_5} and \ref{table_fsv_10}.

\begin{table}[!t]
	\small
	\caption{Number of successes from 50 pairs of random instances and random initial points for the $\ell_{1}$ minimization model (\ref{fsv}) and $n=10$.}
	\centering
	\label{table_fsv_10}
	\begin{tabular}{c|c|c|c|c|c|c|c}
		\hline
		$(n, m)$ &$ \tau $    & \multicolumn{1}{c|}{\begin{tabular}[c]{@{}c@{}}Steepest-\\ descent\end{tabular}} & \multicolumn{1}{c|}{\begin{tabular}[c]{@{}c@{}}Barzilai-\\ Borwein\end{tabular}} & \multicolumn{1}{c|}{\begin{tabular}[c]{@{}c@{}}Conjugate-\\ gradient\end{tabular}} & \multicolumn{1}{c|}{\begin{tabular}[c]{@{}c@{}}Trust-\\ regions\end{tabular}} & \multicolumn{1}{c|}{BFGS} & \multicolumn{1}{c}{ARC} \\ \hline
		$(10,60)$& $10^{-5}$ & 24 & \textbf{28} & 0 & \textbf{28} & \textbf{28} & 25 \\
				& $10^{-6}$ & 24 & \textbf{28} & 0 & \textbf{28} & \textbf{28} & 25 \\
				& $10^{-7}$ & 24 & \textbf{28} & 0 & \textbf{28} & \textbf{28} & 25 \\
				& $10^{-8}$ & 23 & \textbf{28} & 0 & \textbf{28} & \textbf{28} & 25 \\ \hline
		$(10,80)$& $10^{-5}$ & 39 & 37 &  1 & \textbf{40} & 39 & \textbf{40} \\
				& $10^{-6}$ & 39 & 37 &  0 & \textbf{40} & 39 & \textbf{40} \\
				& $10^{-7}$ & 39 & 37 &  0 & \textbf{40} & 39 & \textbf{40} \\
				& $10^{-8}$ & 39 & 37 &  0 & \textbf{40} & 39 & \textbf{40} \\ \hline
		$(10,100)$& $10^{-5}$ & 45 & \textbf{48} &  3 & 45 & 43 & 41 \\
				& $10^{-6}$ & 45 & \textbf{48} &  0 & 45 & 43 & 41 \\
				& $10^{-7}$ & 45 & \textbf{48} &  0 & 45 & 43 & 41 \\
				& $10^{-8}$ & 45 & \textbf{48} &  0 & 45 & 43 & 41 \\ \hline
		$(10,120)$& $10^{-5}$ & 44 & \textbf{46} &  1   & 44  & 44    & 43  \\
				& $10^{-6}$ & 44 & \textbf{46} &   0   & 44  & 44    & 43  \\
				& $10^{-7}$ & 44 & \textbf{46} &   0   & 44  & 44    & 43  \\
				& $10^{-8}$ & 44 & \textbf{46} &   0   & 44  & 44    & 43  \\
		\hline \hline
		$(n, m)$       &$ \tau $    & \multicolumn{1}{c|}{\begin{tabular}[c]{@{}c@{}}Steepest-\\ descent\end{tabular}} & \multicolumn{1}{c|}{\begin{tabular}[c]{@{}c@{}}Barzilai-\\ Borwein\end{tabular}} & \multicolumn{1}{c|}{\begin{tabular}[c]{@{}c@{}}Conjugate-\\ gradient\end{tabular}} & \multicolumn{1}{c|}{\begin{tabular}[c]{@{}c@{}}Trust-\\ regions\end{tabular}} & \multicolumn{1}{c|}{BFGS} & \multicolumn{1}{c}{ARC} \\ \hline
		$(10,60)$& $10^{-9}$ &	3   & \textbf{28}  & 0     & \textbf{28}  & \textbf{28}     & 25  \\
				& $10^{-10}$ &	0   & \textbf{28}  & 0     & \textbf{28}  & \textbf{28}     & 25  \\
				& $10^{-11}$ &	0   & \textbf{28}  & 0     & \textbf{28}  & \textbf{28}     & 22  \\
				& $10^{-12}$ &	0   & \textbf{28}  & 0     & \textbf{28}  & 27     & 12  \\ \hline
		$(10,80)$& $10^{-9}$ &	5   & 37 & 0     & \textbf{40}  & 39     & \textbf{40}  \\
				& $10^{-10}$ &	0  & 37 & 0     & \textbf{40}  & 39     & \textbf{40}  \\
				& $10^{-11}$ &	0  & 37 & 0     & \textbf{40}  & 39     & 39  \\
				& $10^{-12}$ &	0  & 37 & 0     & \textbf{40}  & 37     & 30  \\ \hline
		$(10,100)$& $10^{-9}$ &	13  & \textbf{48}  & 0     & 45  & 43     & 41  \\
				& $10^{-10}$ &	2   & \textbf{48}  & 0     & 45  & 43     & 41  \\
				& $10^{-11}$ &	0   & \textbf{48}  & 0     & 45  & 43     & 40  \\
				& $10^{-12}$ &	0   & \textbf{48}  & 0     & 45  & 43     & 37  \\ \hline
		$(10,120)$& $10^{-9}$ & 14 & \textbf{46} &   0    & 44  & 44     & 43  \\
				& $10^{-10}$ & 0 & \textbf{46} &   0    & 44  & 44     & 43  \\
				& $10^{-11}$ & 0 & \textbf{46} &   0    & 44  & 44     & 43  \\
				& $10^{-12}$ & 0 & \textbf{46} &   0    & 44  & 43     & 40  \\ \hline
	\end{tabular}
\end{table}

\begin{figure}[!t]\centering
	\begin{subfigure}{0.24\textwidth} %% perfect_5000_iterplot_appr_abs
		\includegraphics[width=\textwidth]{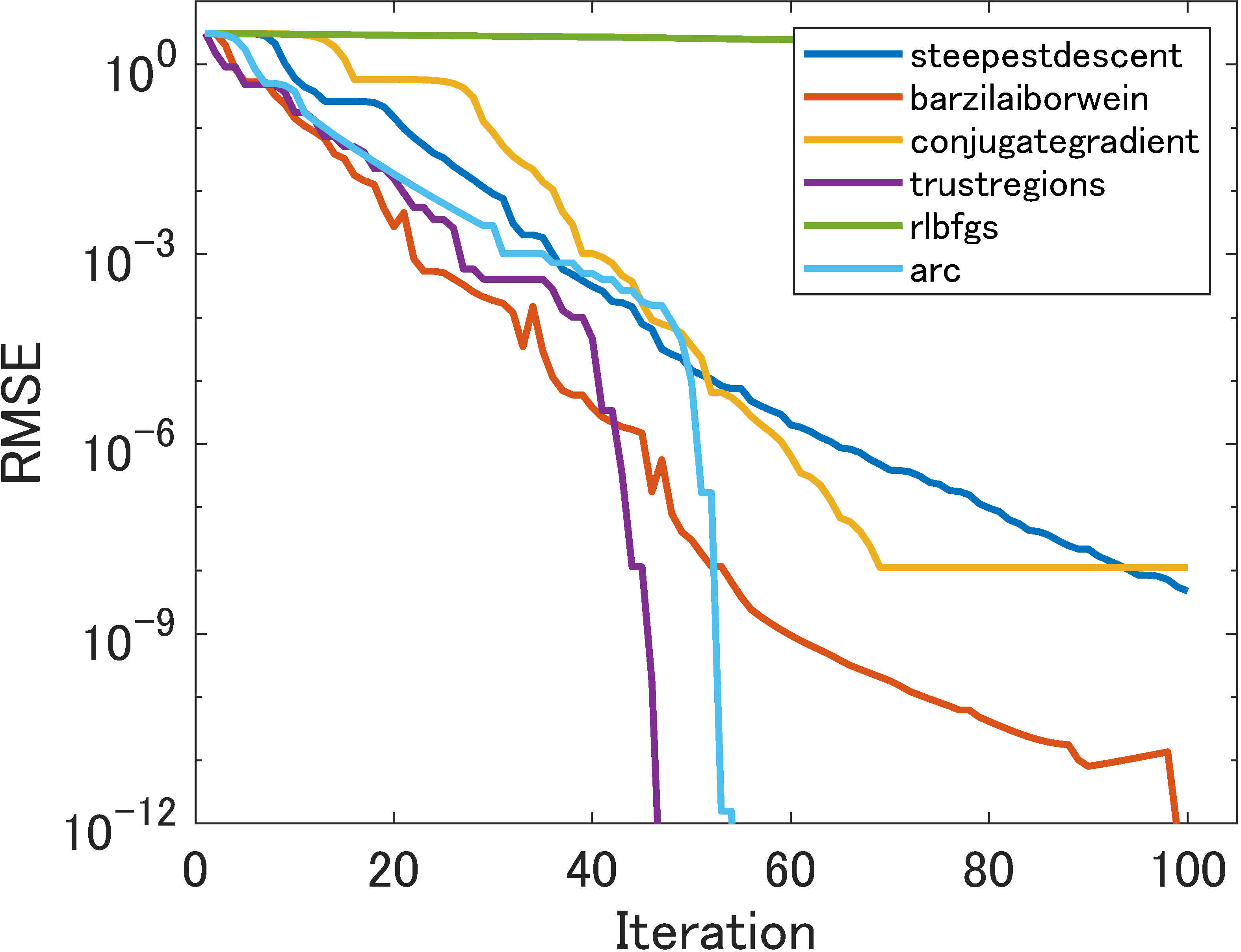}
		\caption{$ \tilde{f}_{1} $}
	\end{subfigure}
	\begin{subfigure}{0.24\textwidth}
		\includegraphics[width=\textwidth]{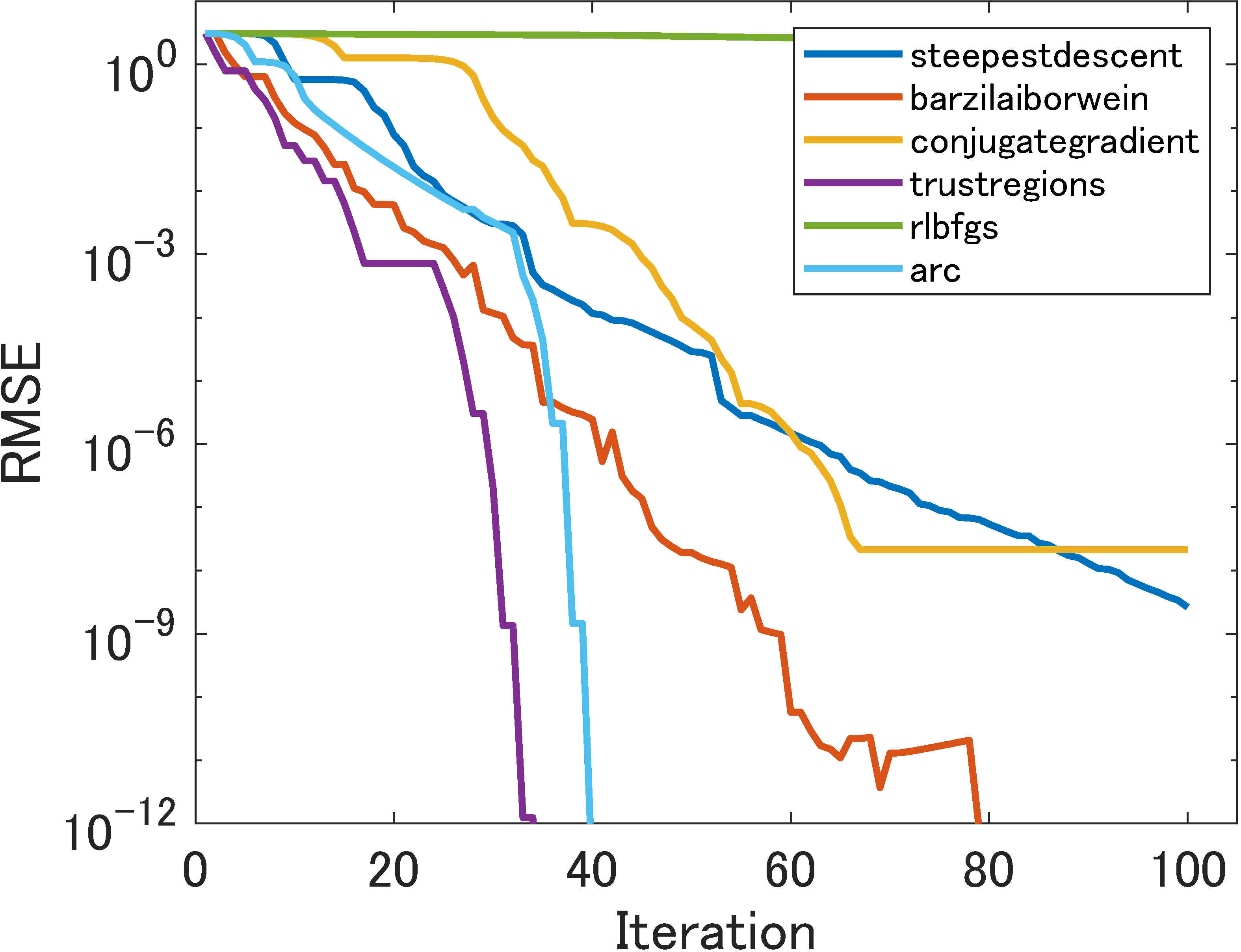}
		\caption{$ \tilde{f}_{2} $}
	\end{subfigure}
	\begin{subfigure}{0.24\textwidth}
		\includegraphics[width=\textwidth]{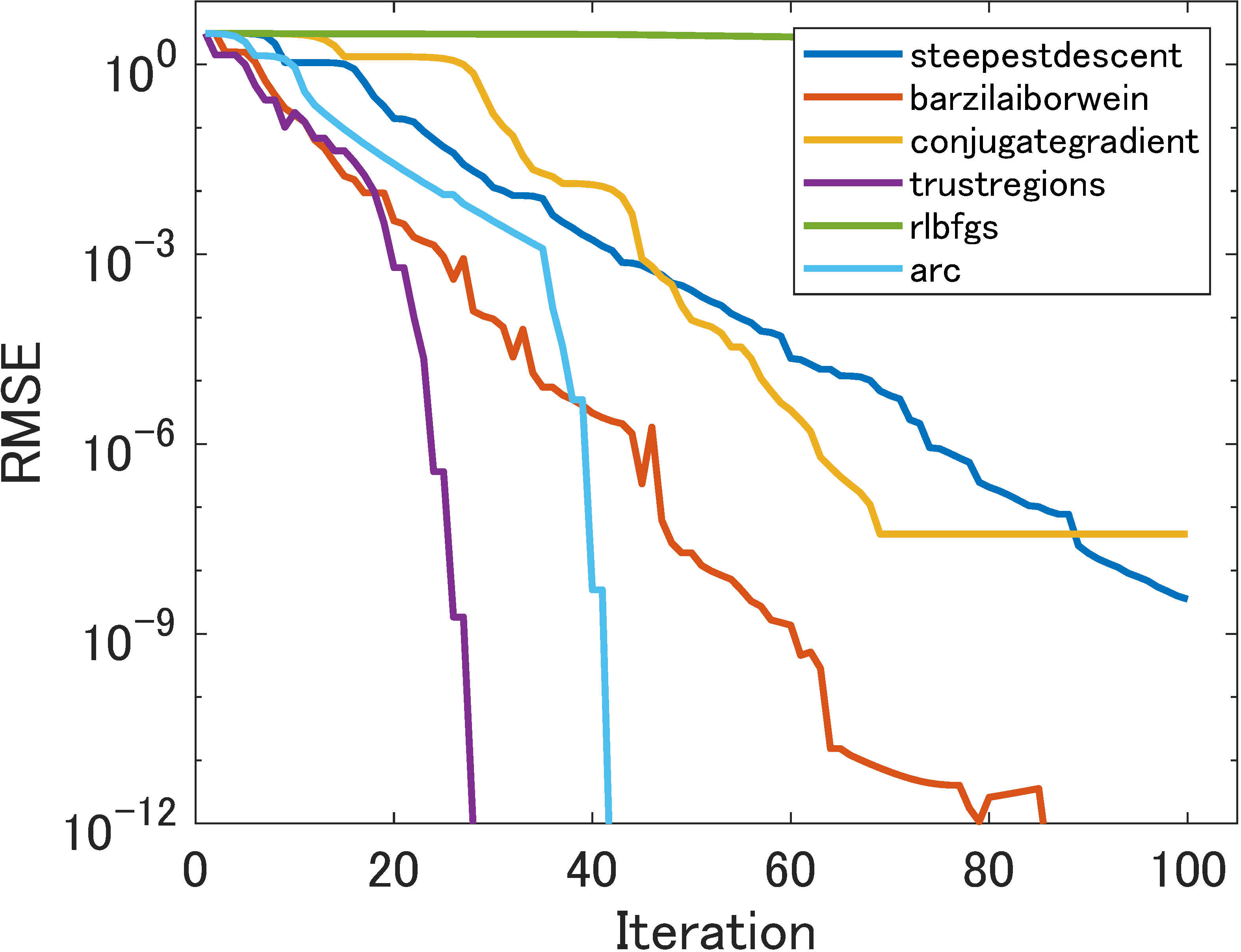}
		\caption{$ \tilde{f}_{3} $}
	\end{subfigure}
	\begin{subfigure}{0.24\textwidth}
		\includegraphics[width=\textwidth]{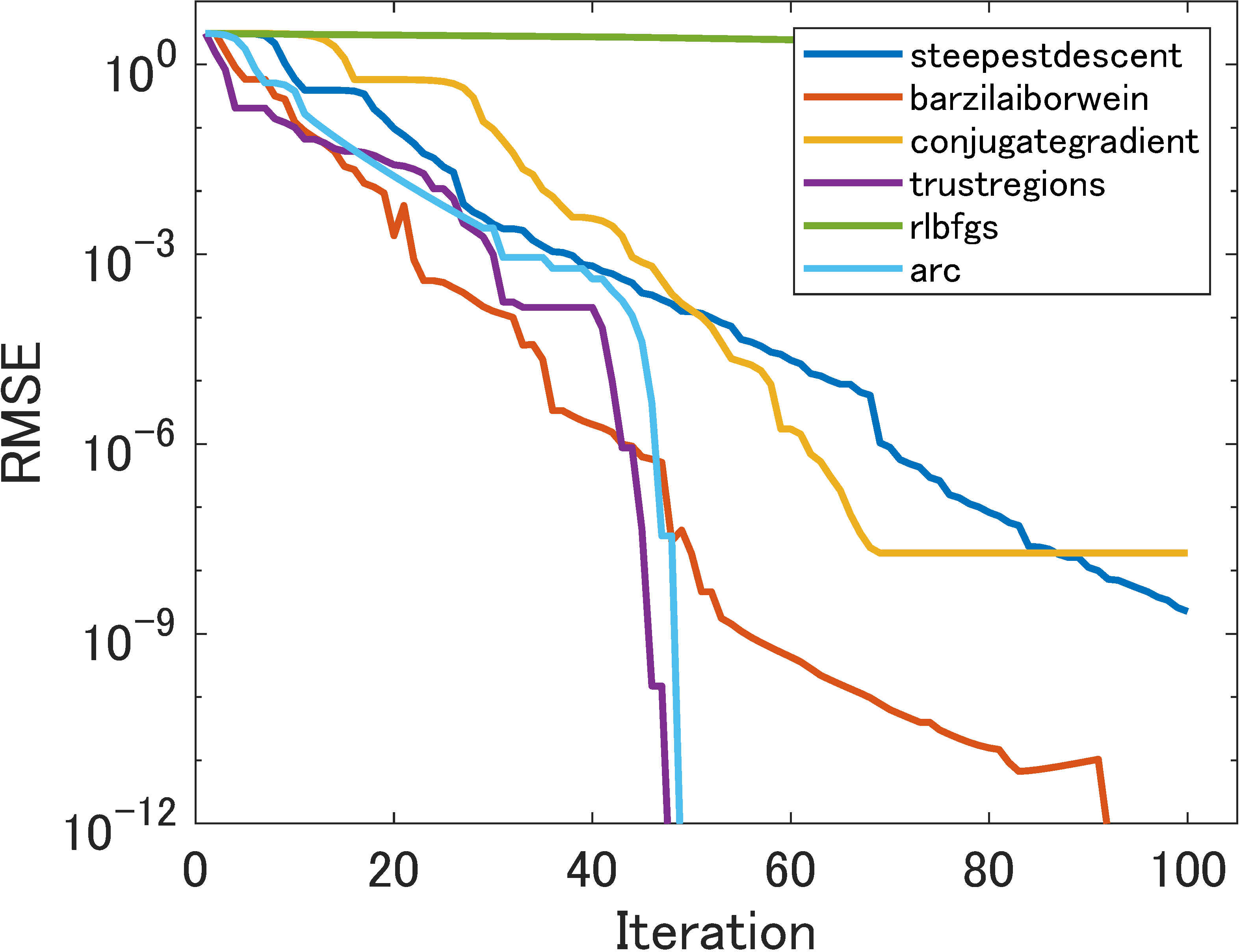}
		\caption{$ \tilde{f}_{4} $}
	\end{subfigure}
	\hfill
	\begin{subfigure}{0.24\textwidth}
		\includegraphics[width=\textwidth]{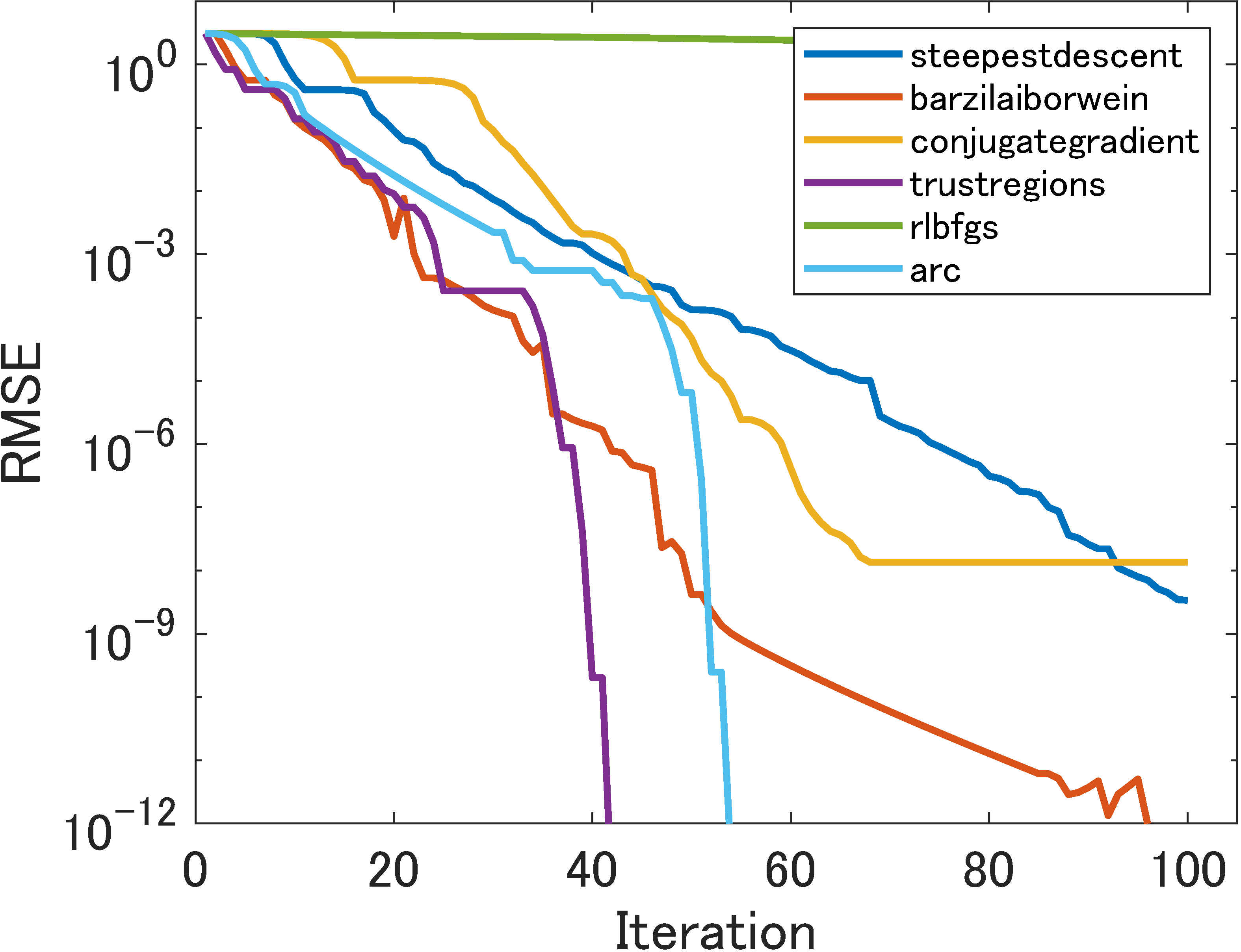}
		\caption{$ \tilde{f}_{5} $}
	\end{subfigure}
	\begin{subfigure}{0.24\textwidth} %% perfect_5000_timeplot_appr_abs
		\includegraphics[width=\textwidth]{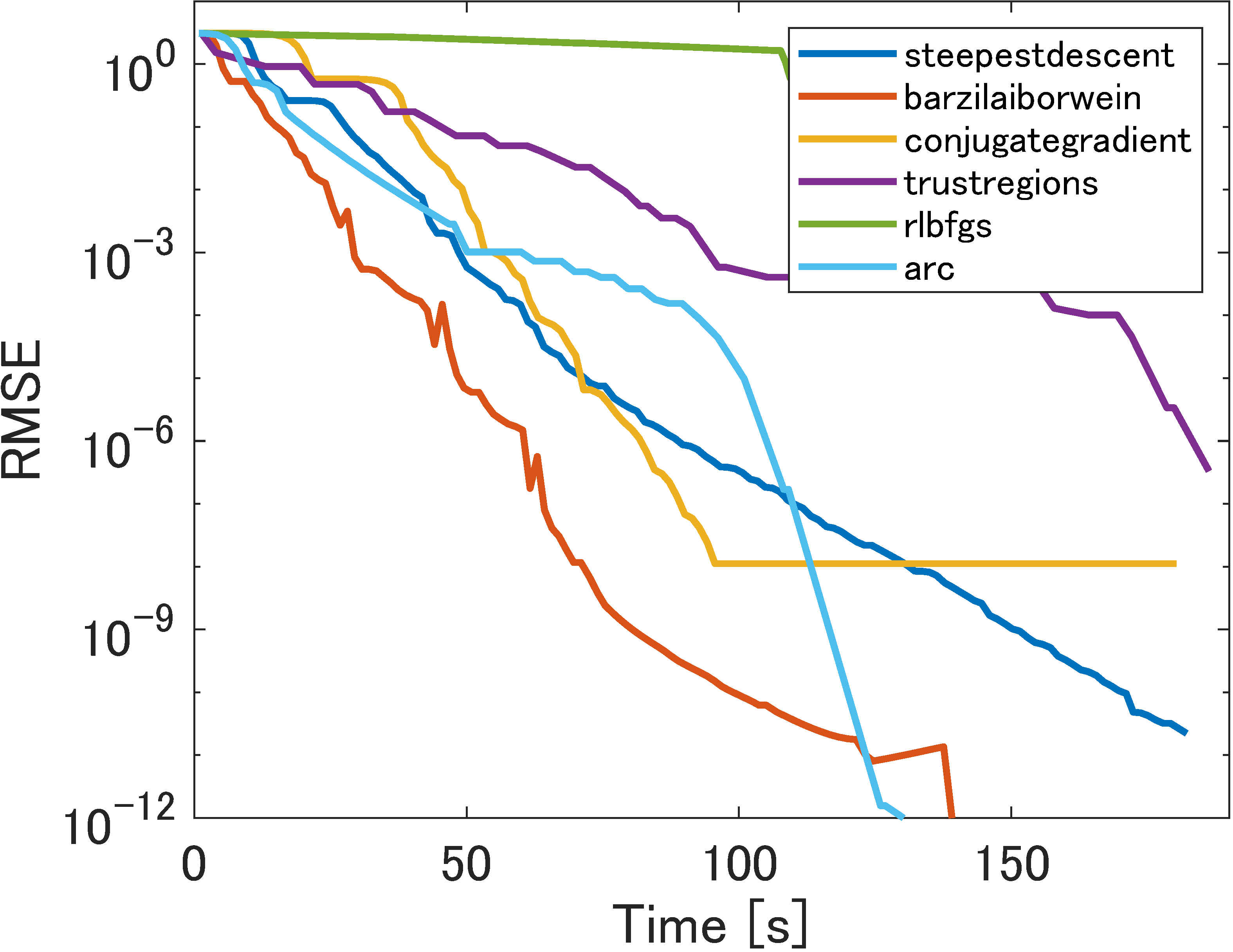}
		\caption{$ \tilde{f}_{1} $}
	\end{subfigure}
	\begin{subfigure}{0.24\textwidth}
		\includegraphics[width=\textwidth]{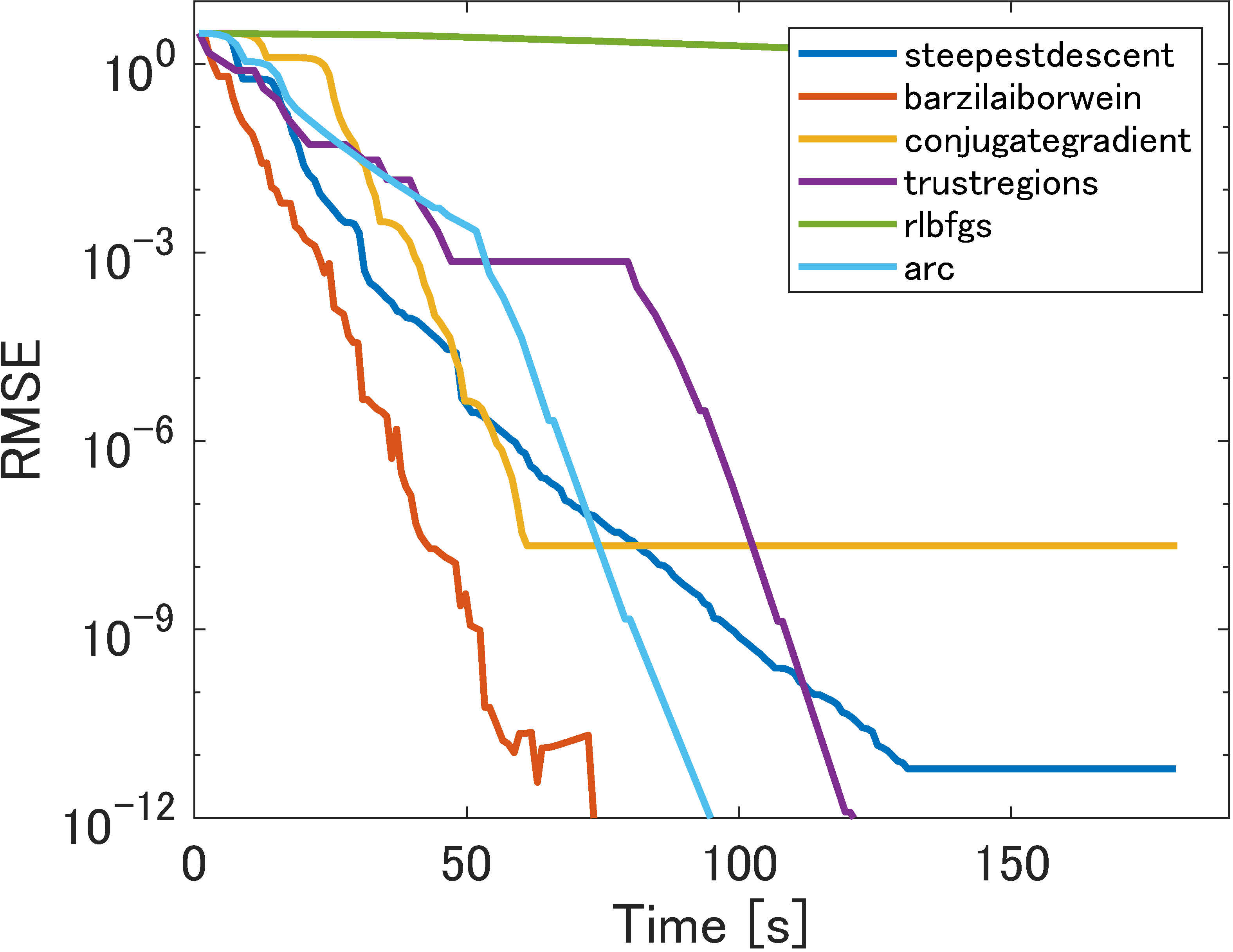}
		\caption{$ \tilde{f}_{2} $}
	\end{subfigure}
	\begin{subfigure}{0.24\textwidth}
		\includegraphics[width=\textwidth]{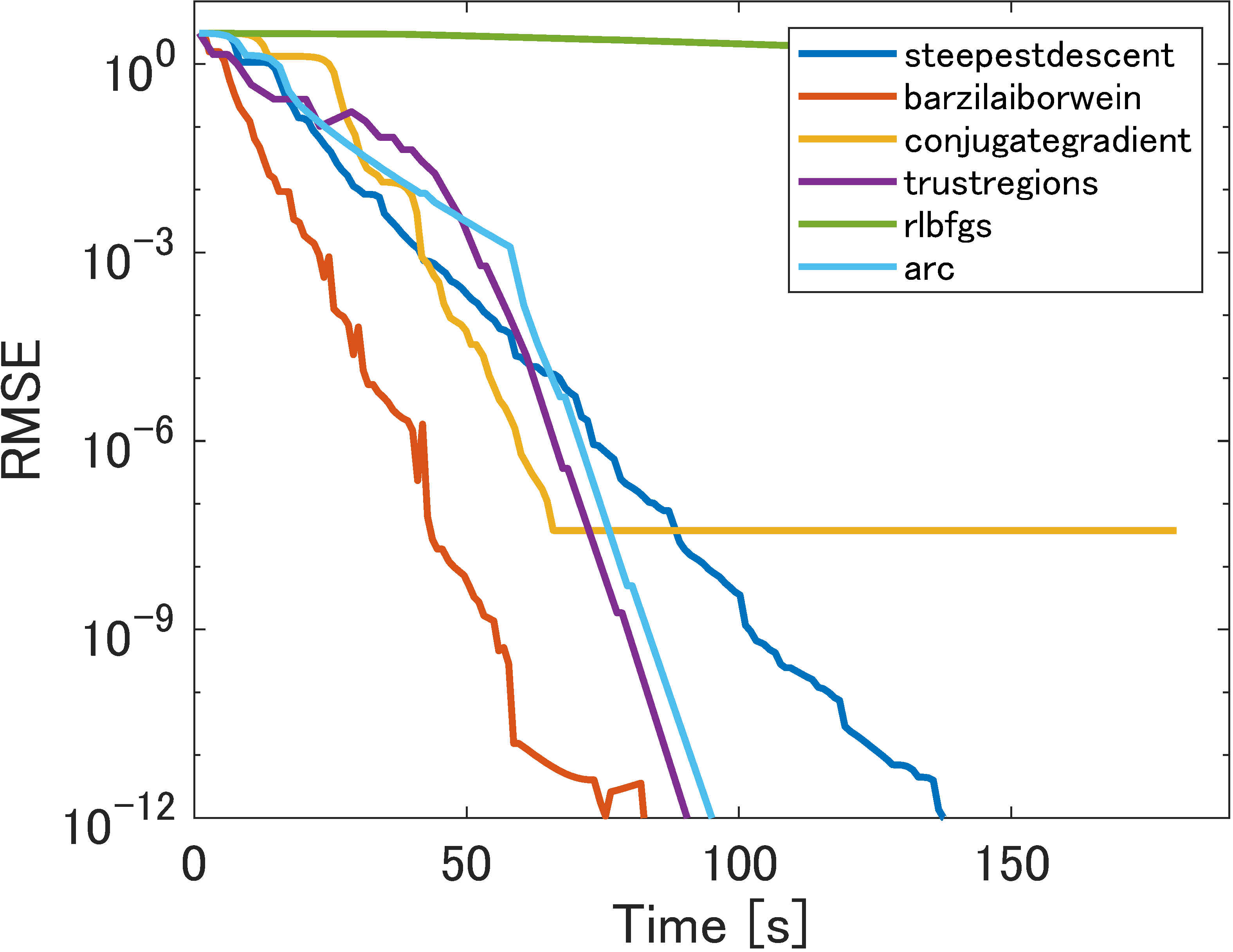}
		\caption{$ \tilde{f}_{3} $}
	\end{subfigure}
	\hfill
	\begin{subfigure}{0.24\textwidth}
		\includegraphics[width=\textwidth]{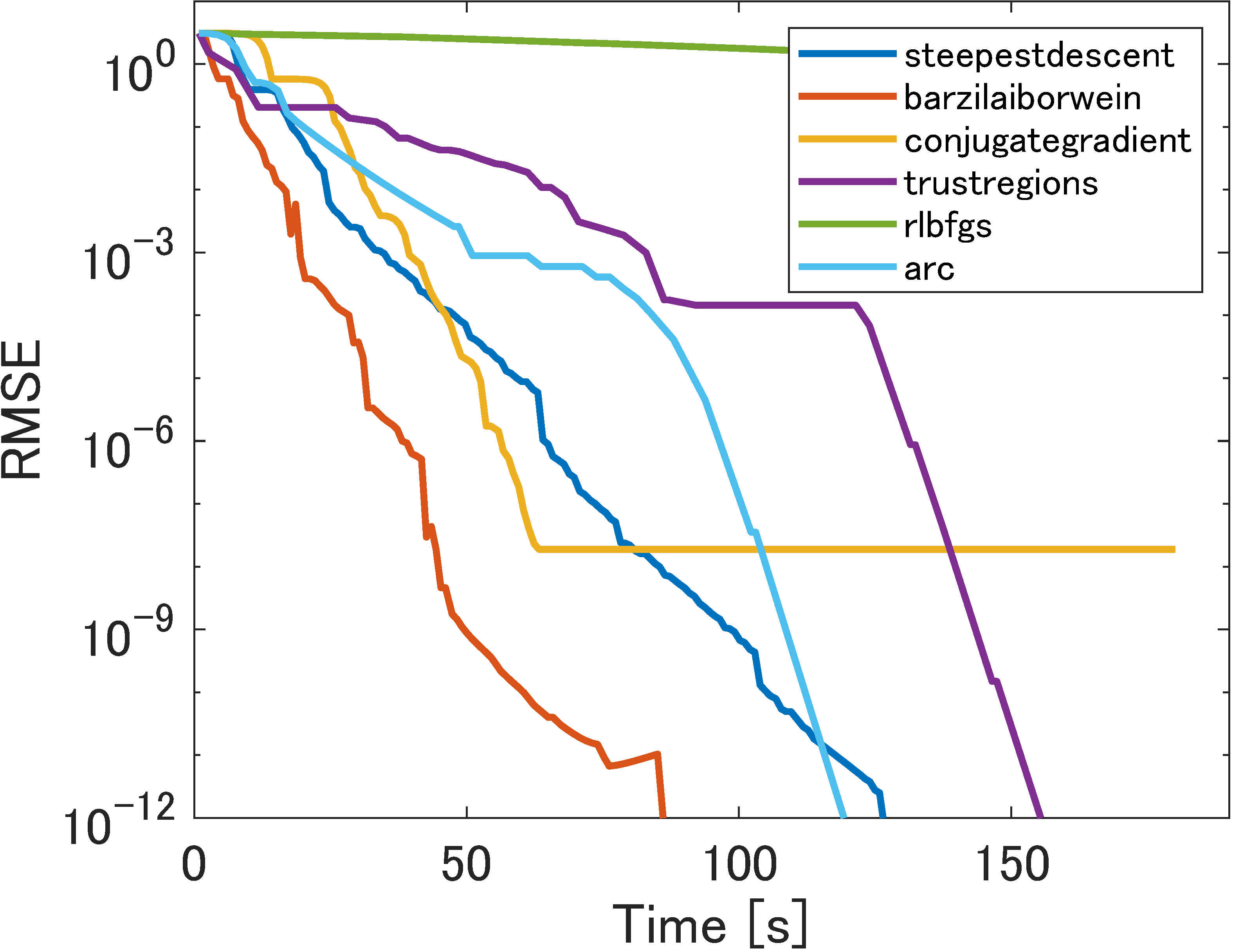}
		\caption{$ \tilde{f}_{4} $}
	\end{subfigure}
	\begin{subfigure}{0.24\textwidth}
		\includegraphics[width=\textwidth]{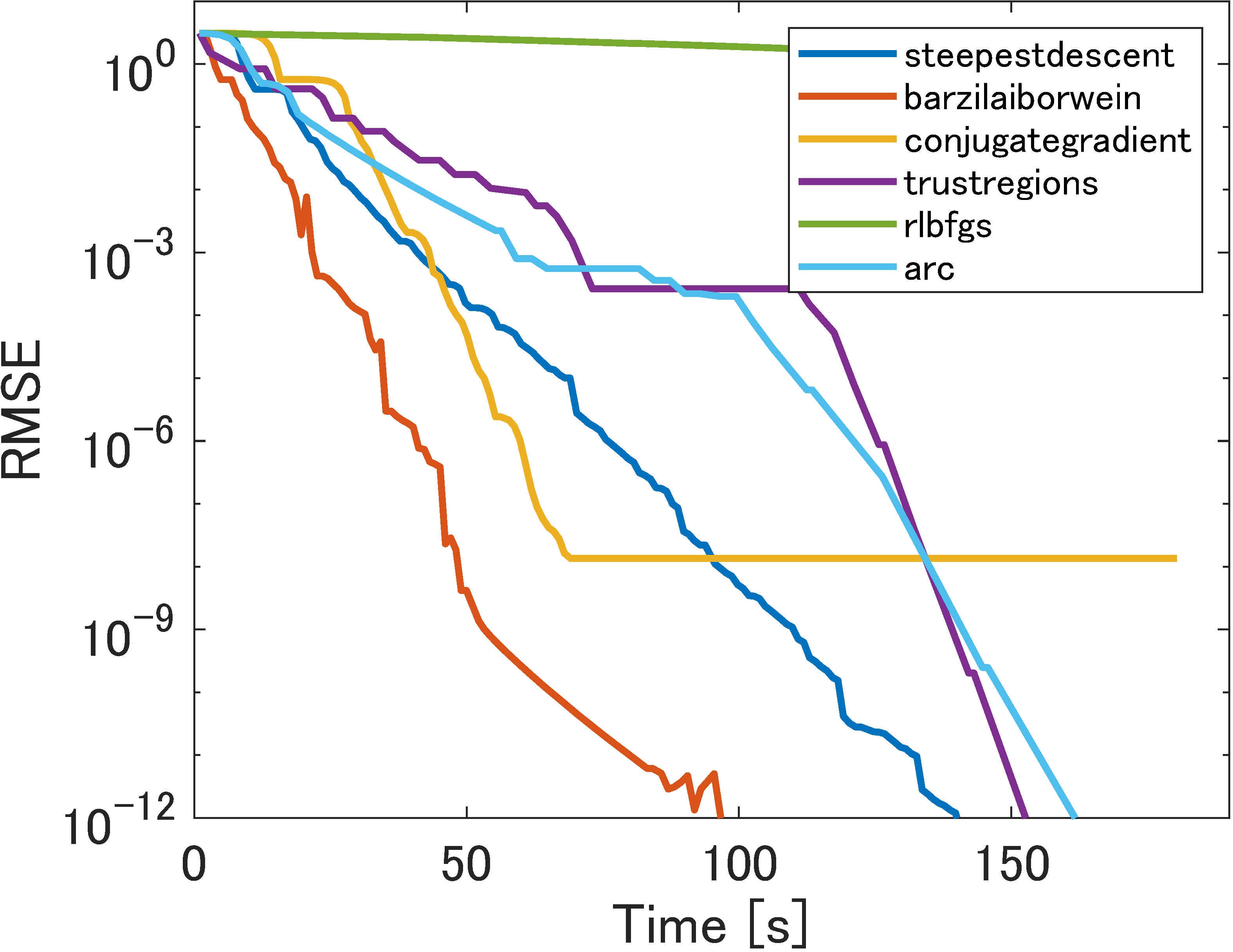}
		\caption{$ \tilde{f}_{5} $}
	\end{subfigure}
	\caption{Perfect low-rank matrix completion of a rank-10 $5000 \times 5000$ matrix without any outliers using different smoothing functions in Table \ref{table:smlist}. (a)--(e) comprise the running iteration comparison; (f)--(j) comprise the time comparison.}
	\label{fig:perfect_5000}
\end{figure}

\subsection{Robust low-rank matrix completion}

Low-rank matrix completion consists of recovering a rank-$r$ matrix $M$ of size $m \times n$ from only a fraction of its entries with $r \ll \min (m, n)$. 
The situation in \emph{robust} low-rank matrix completion is one where only a few observed entries, called \emph{outliers}, are perturbed, i.e.,
\begin{equation*}
	M=M_{0}+S,
\end{equation*}
where $M_{0}$ is the unperturbed original data matrix of rank $r$ and $S$ is a sparse matrix. 
This is a case of adding non-Gaussian noise for which the traditional $\ell_{2}$ minimization model,
\begin{equation*}
	\min _{X \in \mathcal{M}_{r}}\left\|\mathcal{P}_{\Omega}(X-M)\right\|_{2}
\end{equation*}
is not well suited to recovery of $ M_{0} $. 
Here, $\mathcal{M}_{r}:=\left\{X \in \mathbb{R}^{m \times n} \mid \operatorname{rank}(X)=r\right\}$ is a fixed rank manifold, $\Omega$ denotes the set of indices of observed entries, and $\mathcal{P}_{\Omega}: \mathbb{R}^{m \times n} \to \mathbb{R}^{m \times n}$ is the projection onto $\Omega$, defined as
\begin{equation*}
	Z_{ij} \stackrel{P_{\Omega}}{\longmapsto} 
	\begin{cases}
		Z_{ij} & \text { if }(i, j) \in \Omega \\
		     0 & \text { if }(i, j) \notin \Omega.
	\end{cases}
\end{equation*}
In \cite{cambier2016robust}, the authors try to solve
\begin{equation*}
	\min _{X \in \mathcal{M}_{r}}\left\|\mathcal{P}_{\Omega}(X-M)\right\|_{1},
\end{equation*}
because the sparsity-inducing property of the $ \ell_{1} $ norm leads one to expect exact recovery when the noise consists of just a few outliers.

In all of the experiments, the problems were generated in the same way as in \cite{cambier2016robust}. 
In particular, after picking the values of $m, n, r$, we generated the ground truth $U \in \mathbb{R}^{m \times r}$, $V \in \mathbb{R}^{n \times r}$ with independent and identically distributed (i.i.d.) Gaussian entries of zero mean and unit variance and $M:=U V^{\top}$. 
We then sampled $k:=$ $\rho r(m+n-r)$ observed entries uniformly at random, where $\rho$ is the oversampling factor. 
In our experiments, we set $\rho=5$ throughout. We chose the initial points $X_{0}$ by using the rank-$r$ truncated SVD decomposition of $\mathcal{P}_{\Omega}(M)$.

Regarding the setting of our Algorithm \ref{alg:sm_NROP222}, we tested all combinations of the five smoothing functions in Table \ref{table:smlist} and six sub-algorithms mentioned before (30 cases in total). We set $\mu_{0}=100 $ and chose an aggressive value of $\theta=0.05$ for reducing $ \mu $, as in \cite{cambier2016robust}. 
The stopping criterion of the loop of the sub-algorithm was set to a maximum of 40 iterations or the gradient tolerance (\ref{par:wo}), whichever was reached first. 
We monitored the iterations $X_{k}$ through the root mean square error (RMSE), which is defined as the error on all the entries between $X_{k}$ and the original low-rank matrix $M_{0}$, i.e.,
\begin{equation*}
	\operatorname{RMSE}\left(X_{k}, M_{0}\right):=\sqrt{\frac{\sum_{i=1}^{m}\sum_{j=1}^{n}\left(X_{k, i j}-M_{0, i j}\right)^{2}}{m n}}.
\end{equation*}

\begin{figure}[!b]\centering
	\begin{subfigure}{0.24\textwidth} 	%% outliers_500_musiga_1_iterplot_appr_abs
		\includegraphics[width=\textwidth]{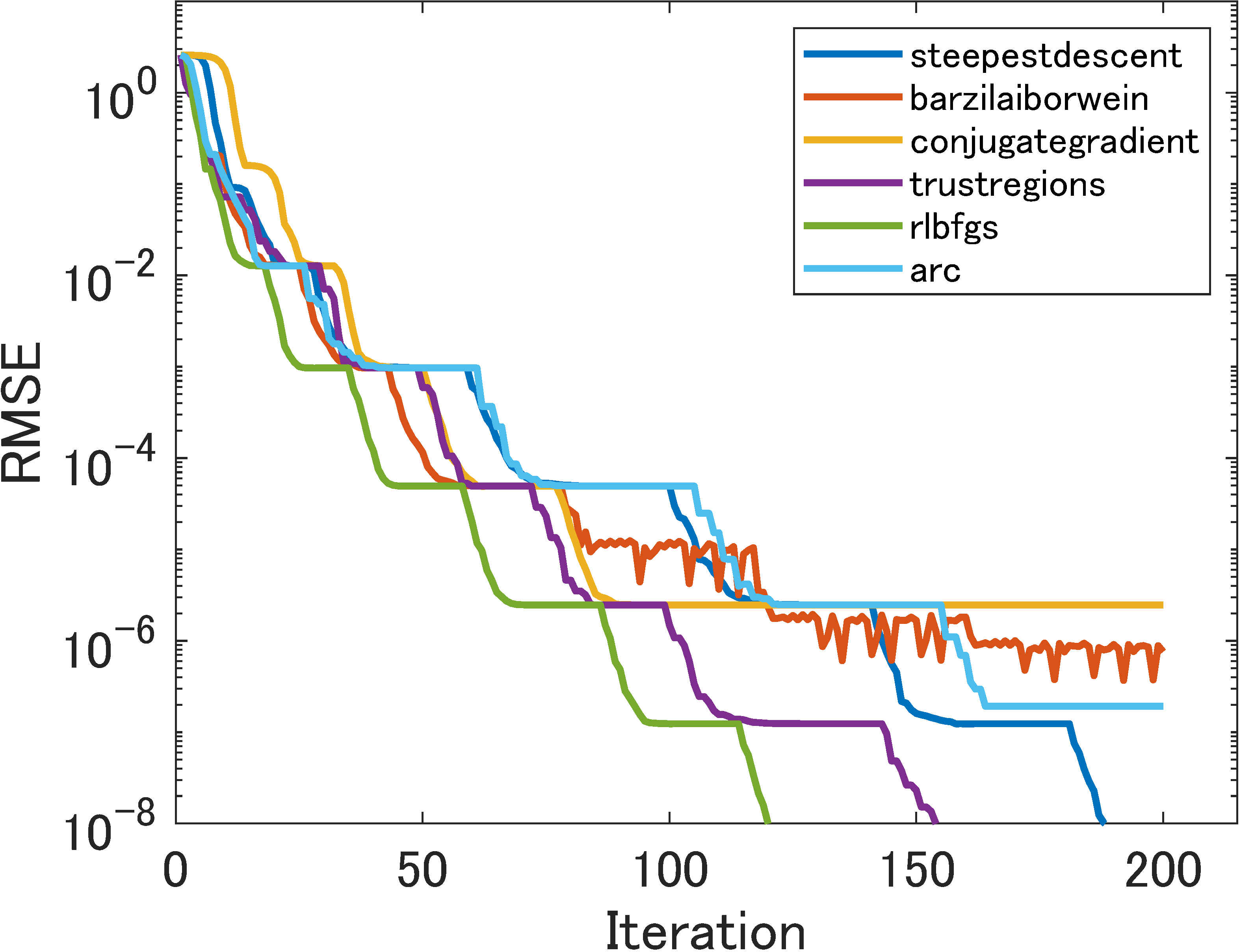}
		\caption{$ \tilde{f}_{1} $}
	\end{subfigure}
	\begin{subfigure}{0.24\textwidth}
		\includegraphics[width=\textwidth]{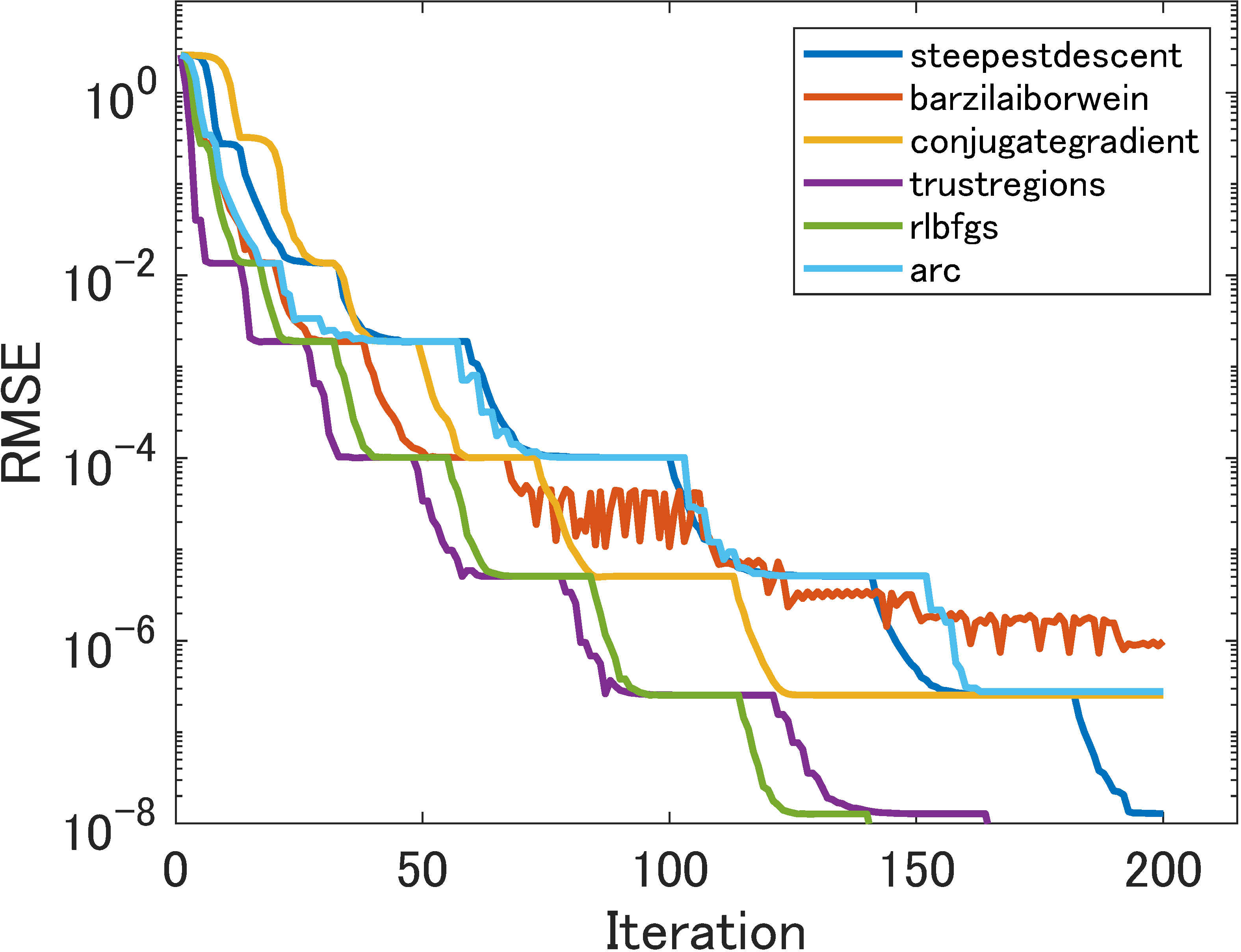}
		\caption{$ \tilde{f}_{2} $}
	\end{subfigure}
	\begin{subfigure}{0.24\textwidth}
		\includegraphics[width=\textwidth]{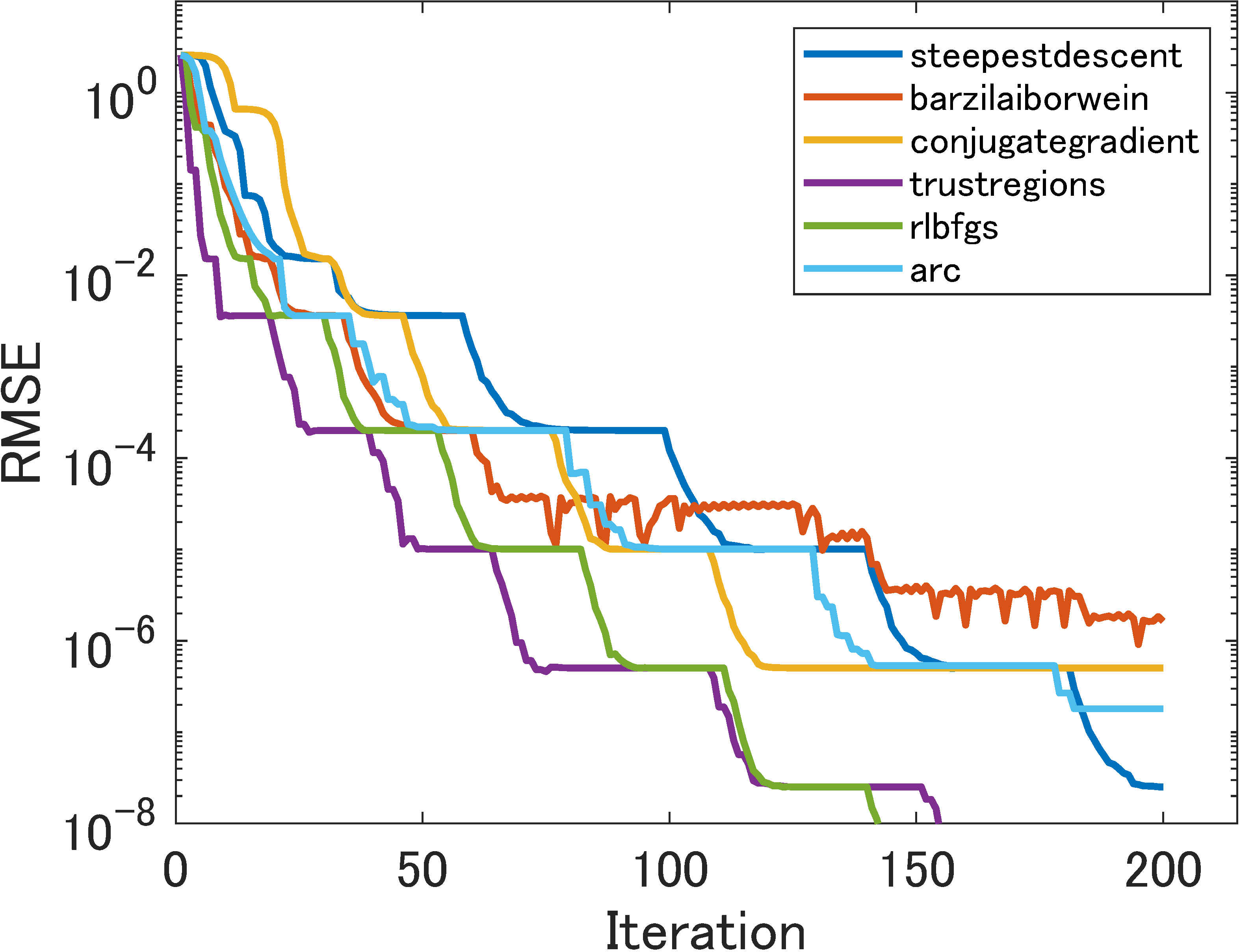}
		\caption{$ \tilde{f}_{3} $}
	\end{subfigure}
	\begin{subfigure}{0.24\textwidth}
		\includegraphics[width=\textwidth]{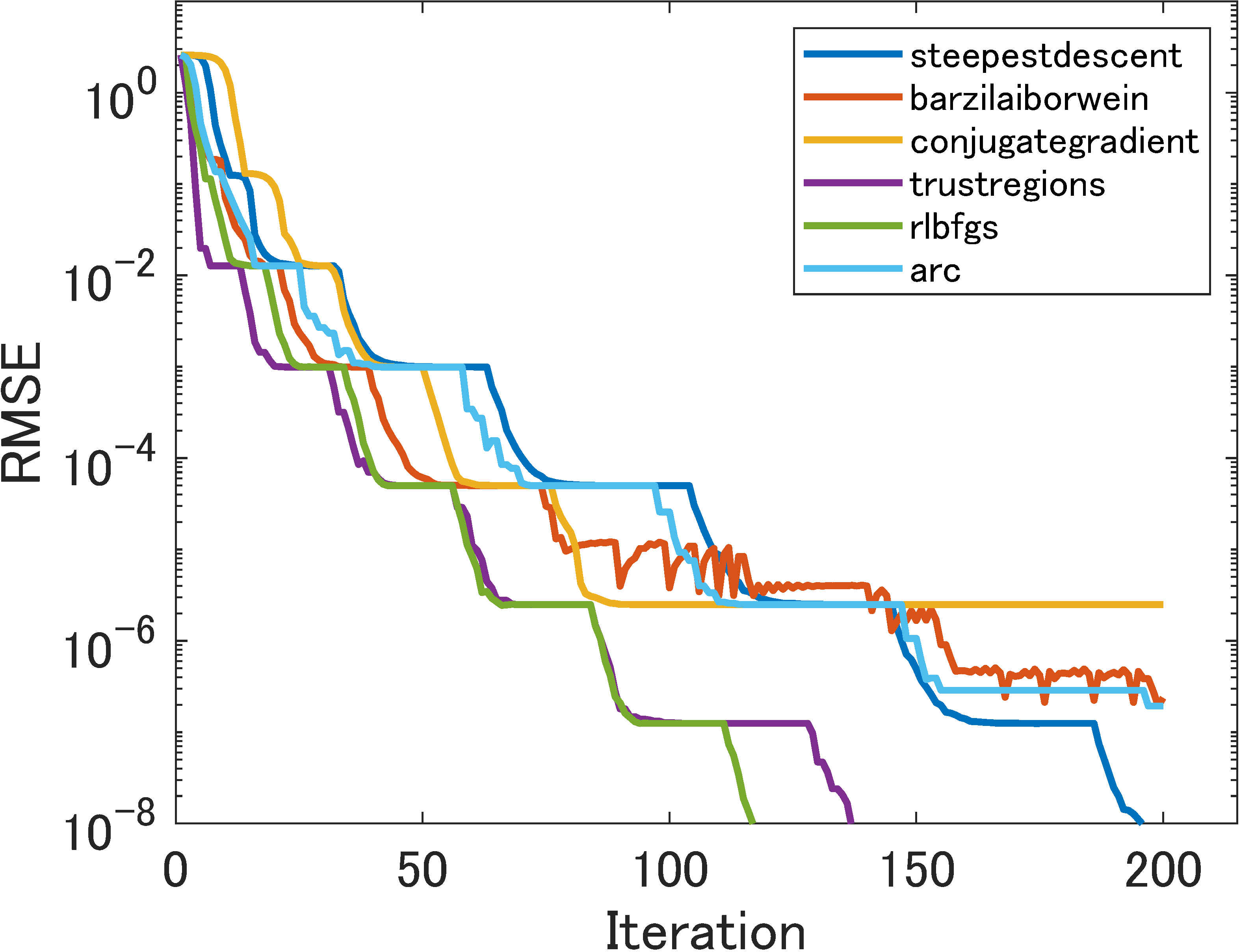}
		\caption{$ \tilde{f}_{4} $}
	\end{subfigure}
	\hfill 
	\begin{subfigure}{0.24\textwidth}
		\includegraphics[width=\textwidth]{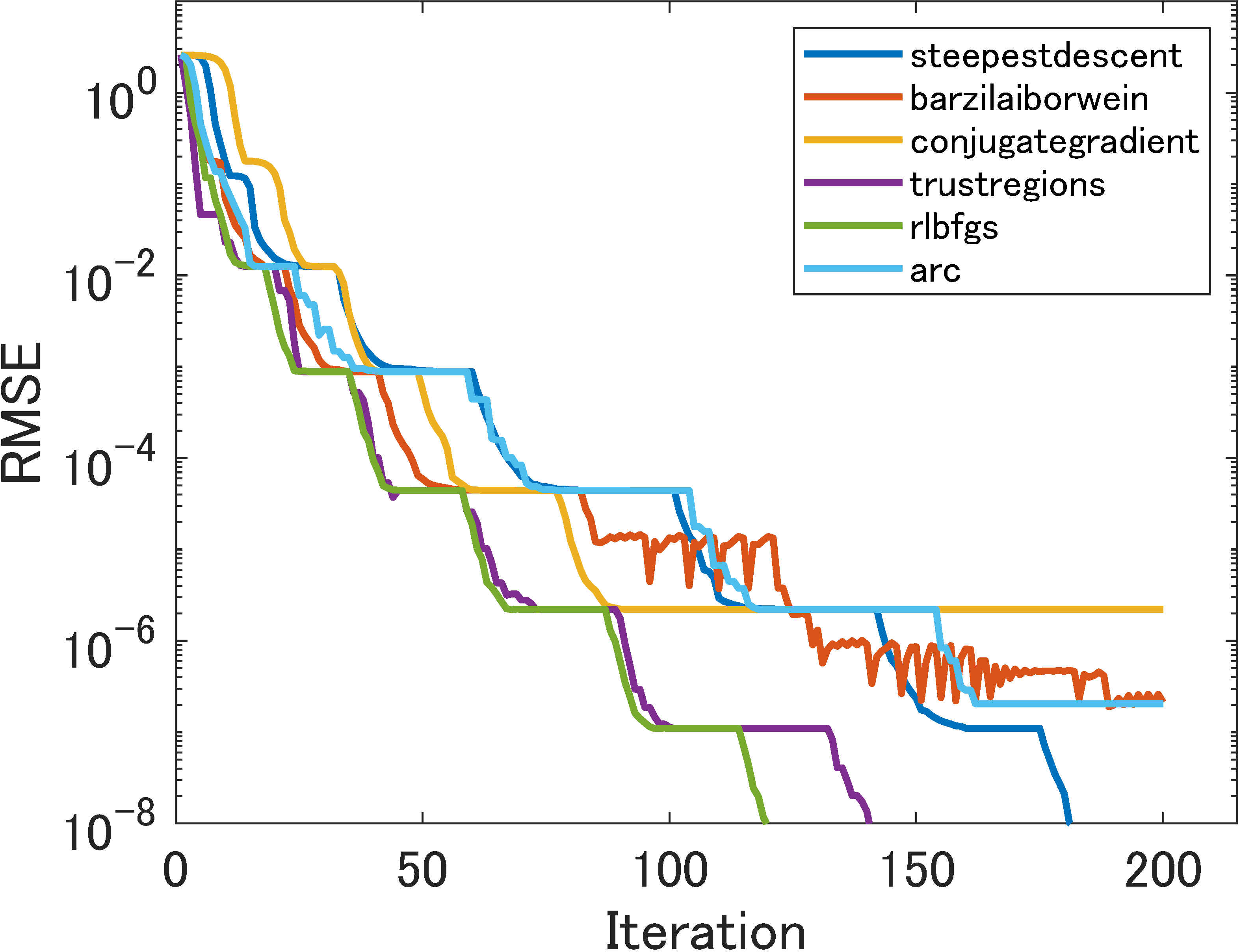}
		\caption{$ \tilde{f}_{5} $}
	\end{subfigure}
	\begin{subfigure}{0.24\textwidth}  %% outliers_500_musiga_1_timeplot_appr_abs
		\includegraphics[width=\textwidth]{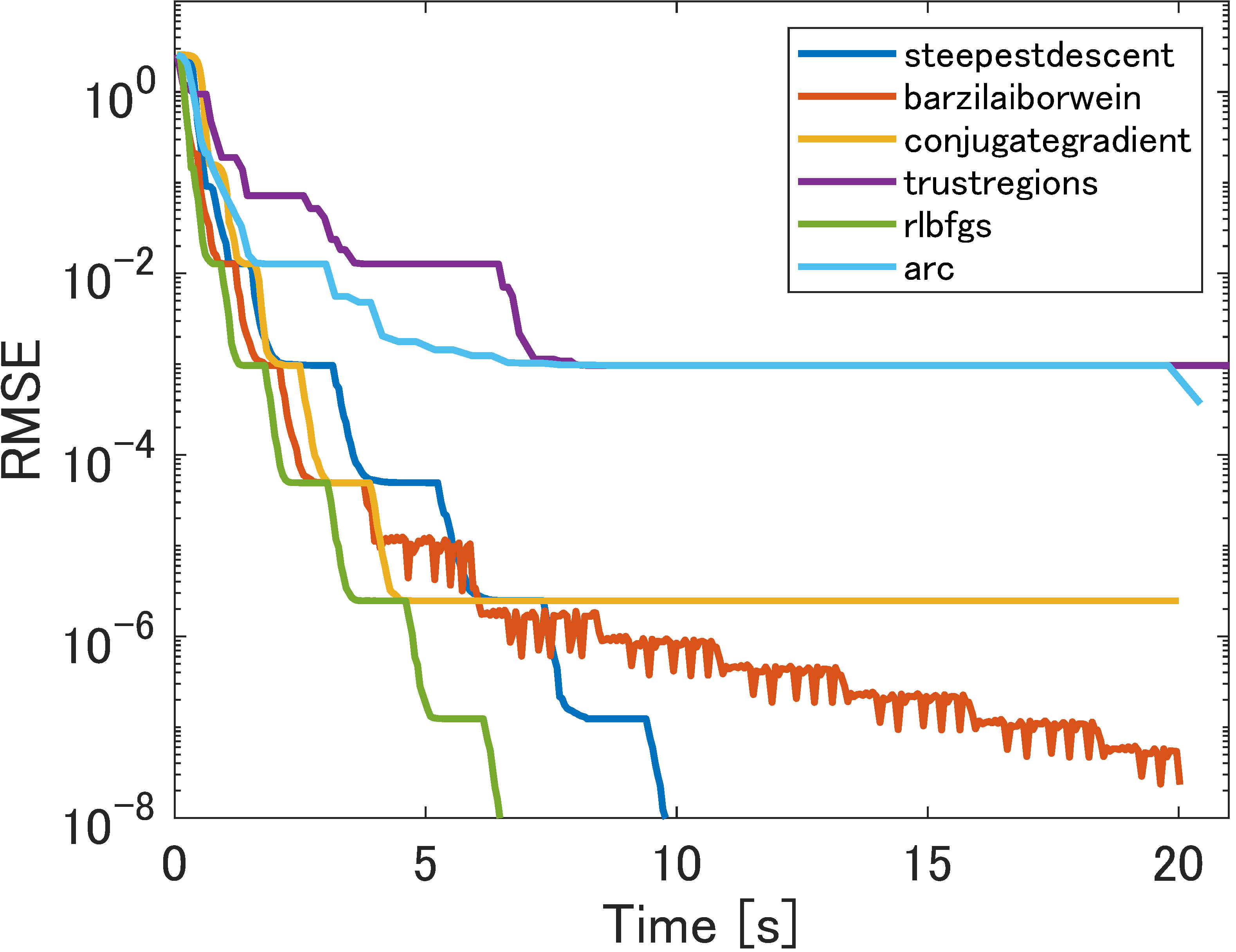}
		\caption{$ \tilde{f}_{1} $}
	\end{subfigure}
	\begin{subfigure}{0.24\textwidth}
		\includegraphics[width=\textwidth]{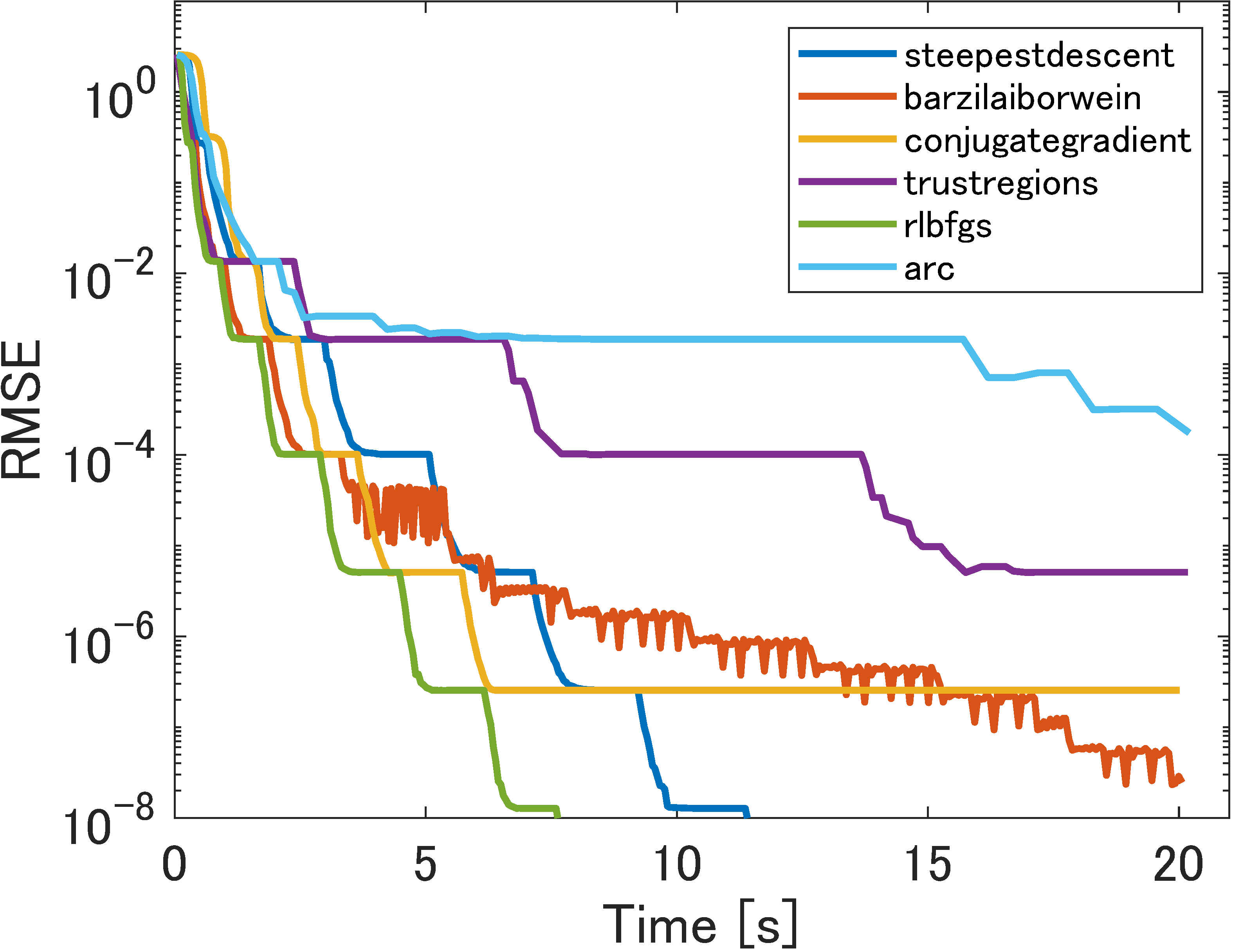}
		\caption{$ \tilde{f}_{2} $}
	\end{subfigure}
	\begin{subfigure}{0.24\textwidth}
		\includegraphics[width=\textwidth]{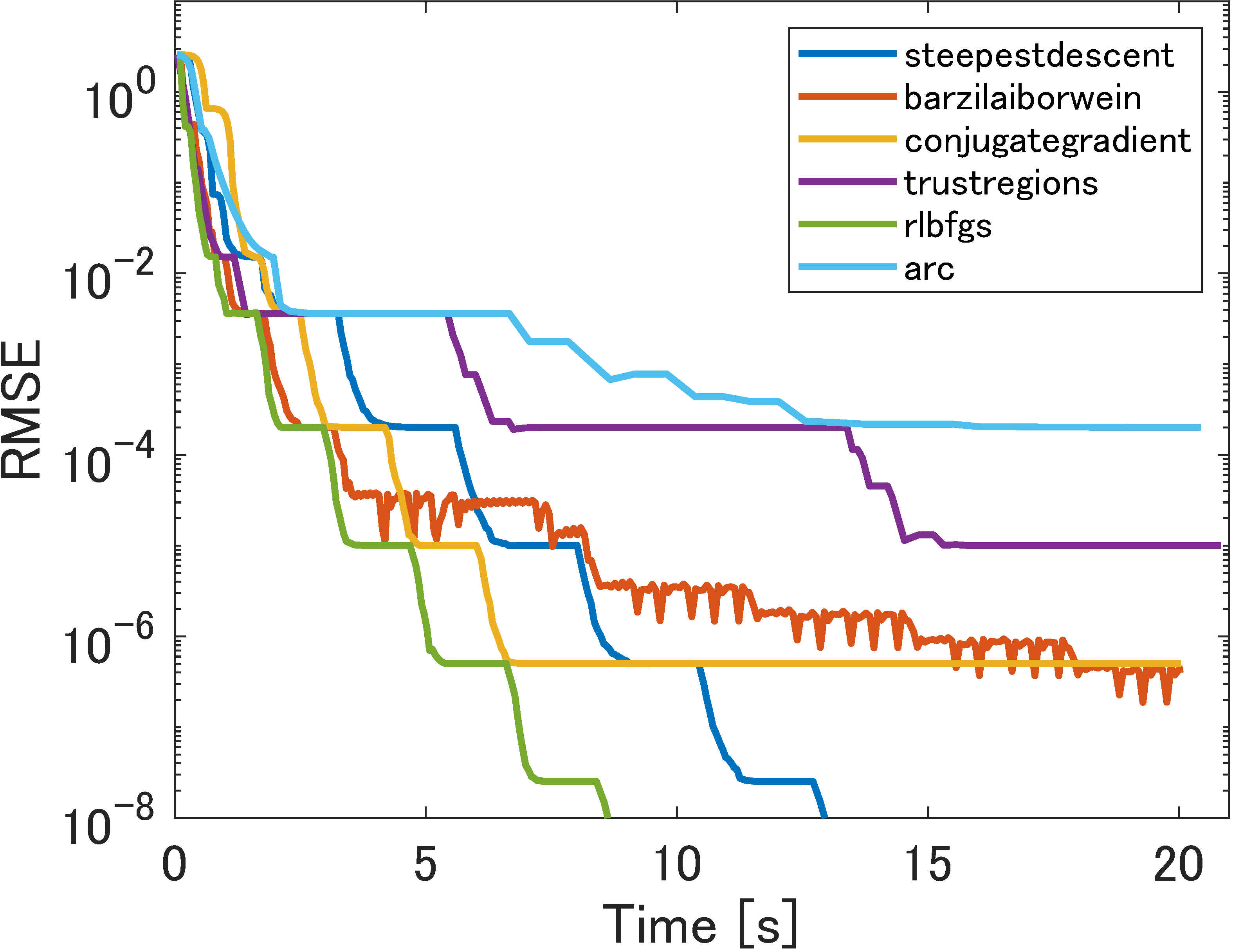}
		\caption{$ \tilde{f}_{3} $}
	\end{subfigure}
	\hfill
	\begin{subfigure}{0.24\textwidth}
		\includegraphics[width=\textwidth]{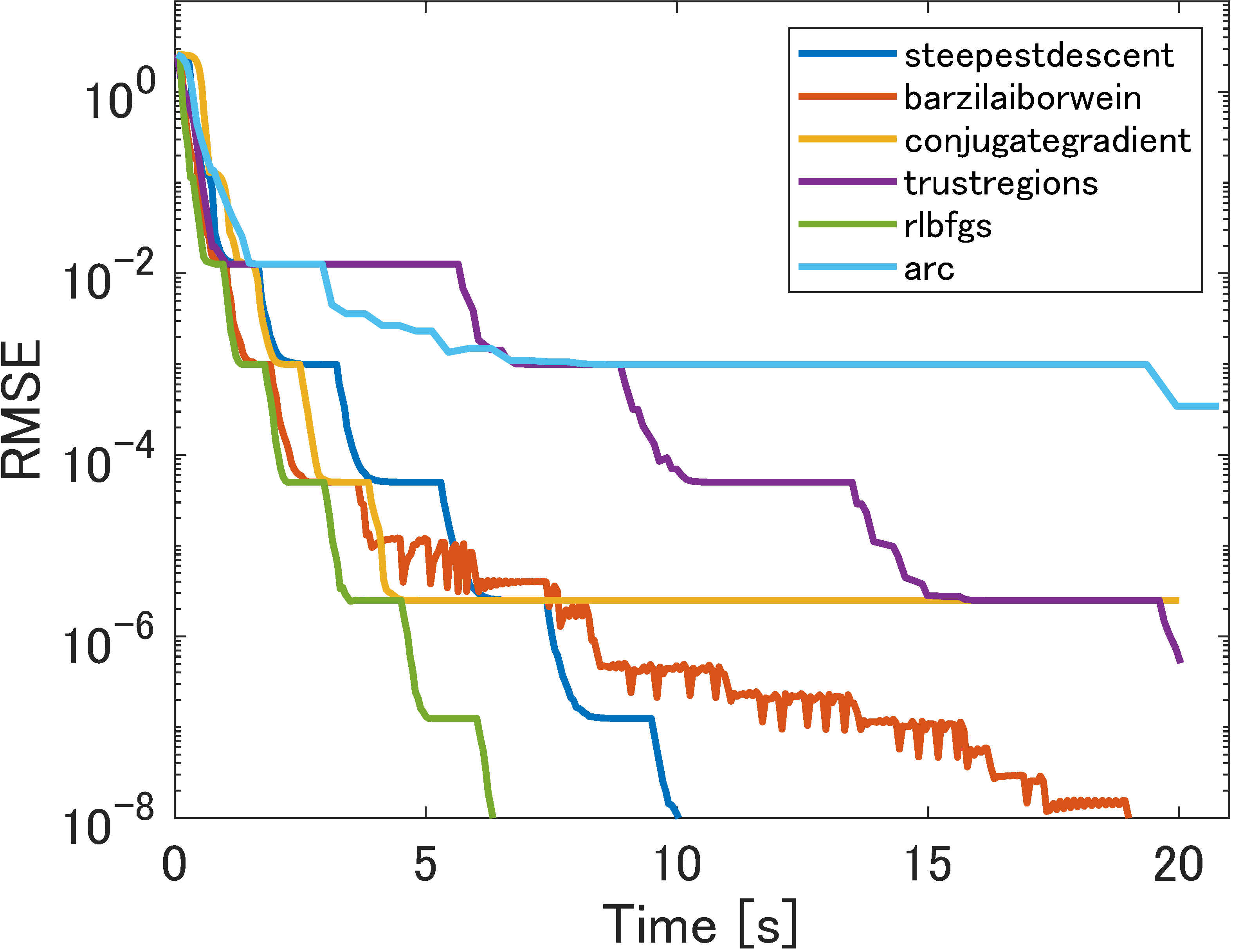}
		\caption{$ \tilde{f}_{4} $}
	\end{subfigure}
	\begin{subfigure}{0.24\textwidth}
		\includegraphics[width=\textwidth]{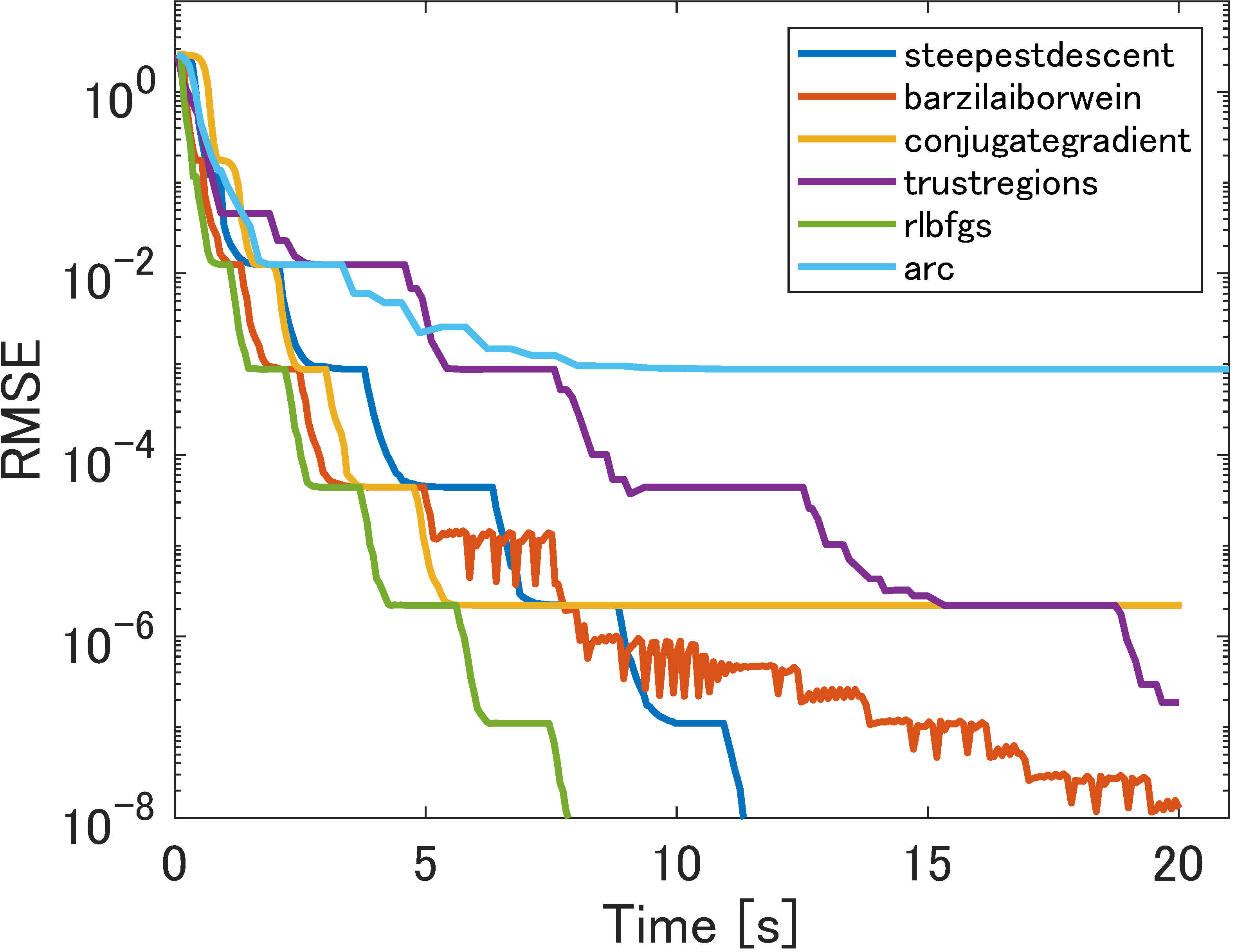}
		\caption{$ \tilde{f}_{5} $}
	\end{subfigure}
	\begin{subfigure}{0.24\textwidth}	%% outliers_500_musiga_2_iterplot_appr_abs	
		\includegraphics[width=\textwidth]{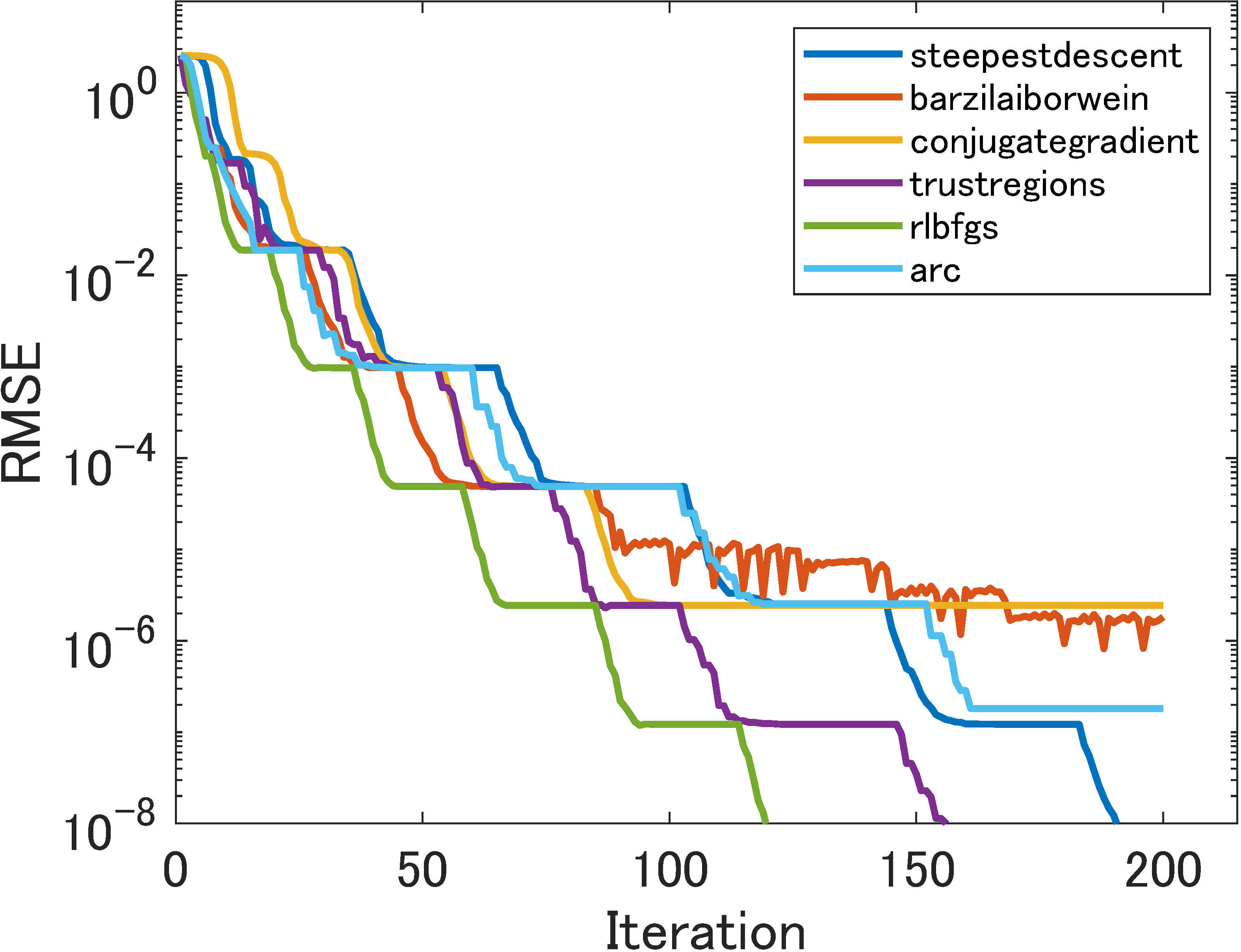}
		\caption{$ \tilde{f}_{1} $}
	\end{subfigure}
	\begin{subfigure}{0.24\textwidth}
		\includegraphics[width=\textwidth]{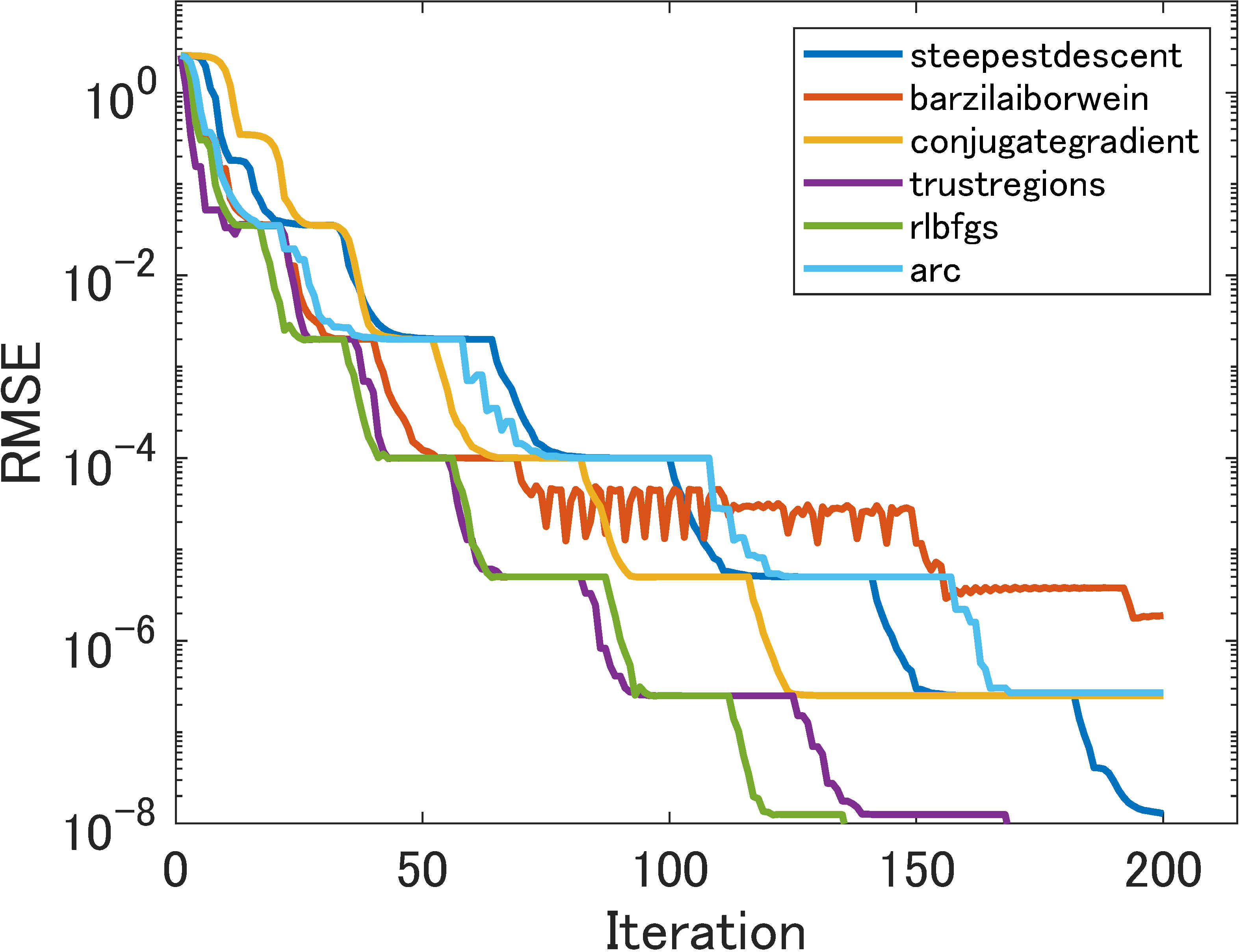}
		\caption{$ \tilde{f}_{2} $}
	\end{subfigure}
	\hfill
	\begin{subfigure}{0.24\textwidth}
		\includegraphics[width=\textwidth]{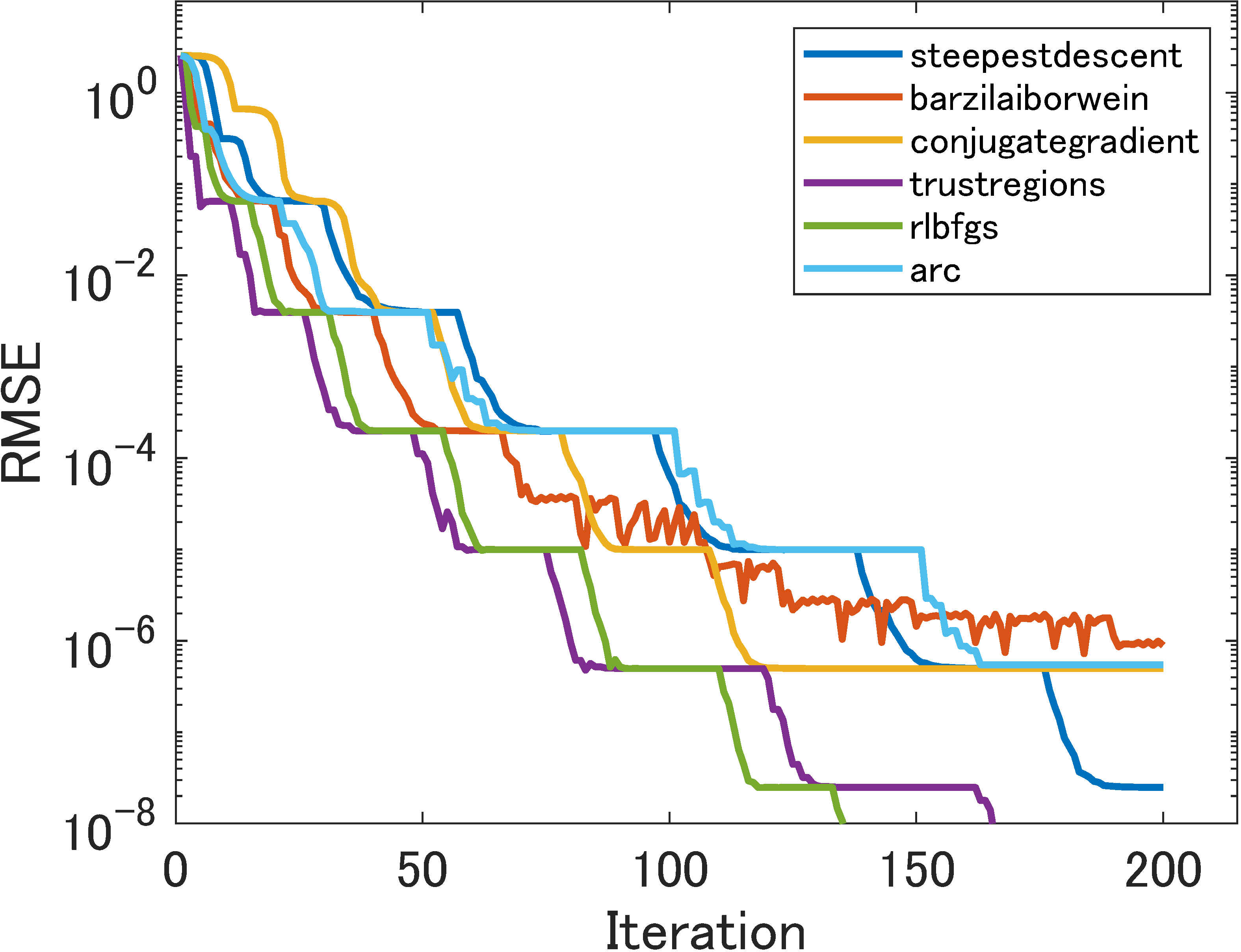}
		\caption{$ \tilde{f}_{3} $}
	\end{subfigure}
	\begin{subfigure}{0.24\textwidth}
		\includegraphics[width=\textwidth]{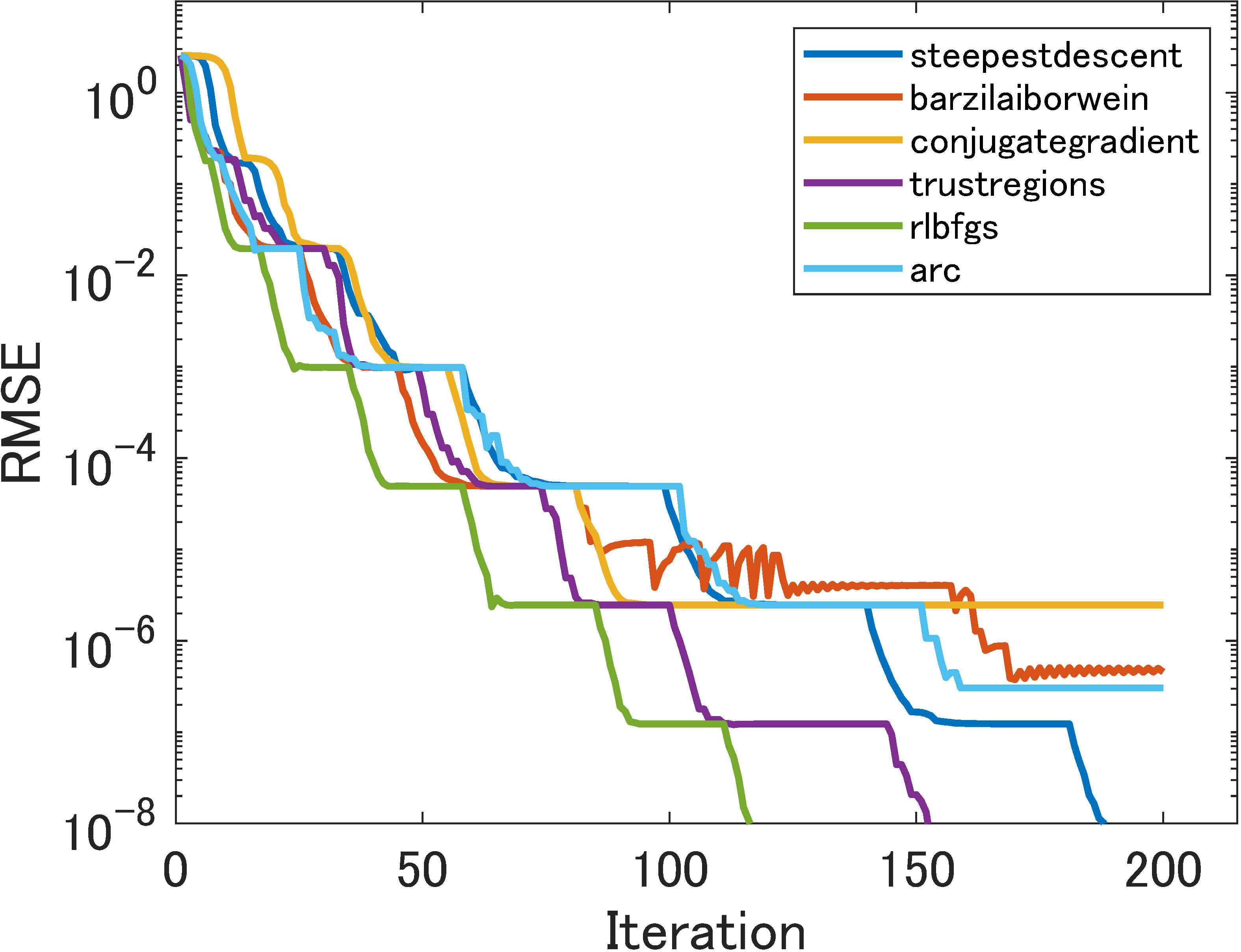}
		\caption{$ \tilde{f}_{4} $}
	\end{subfigure}
	\begin{subfigure}{0.24\textwidth}
		\includegraphics[width=\textwidth]{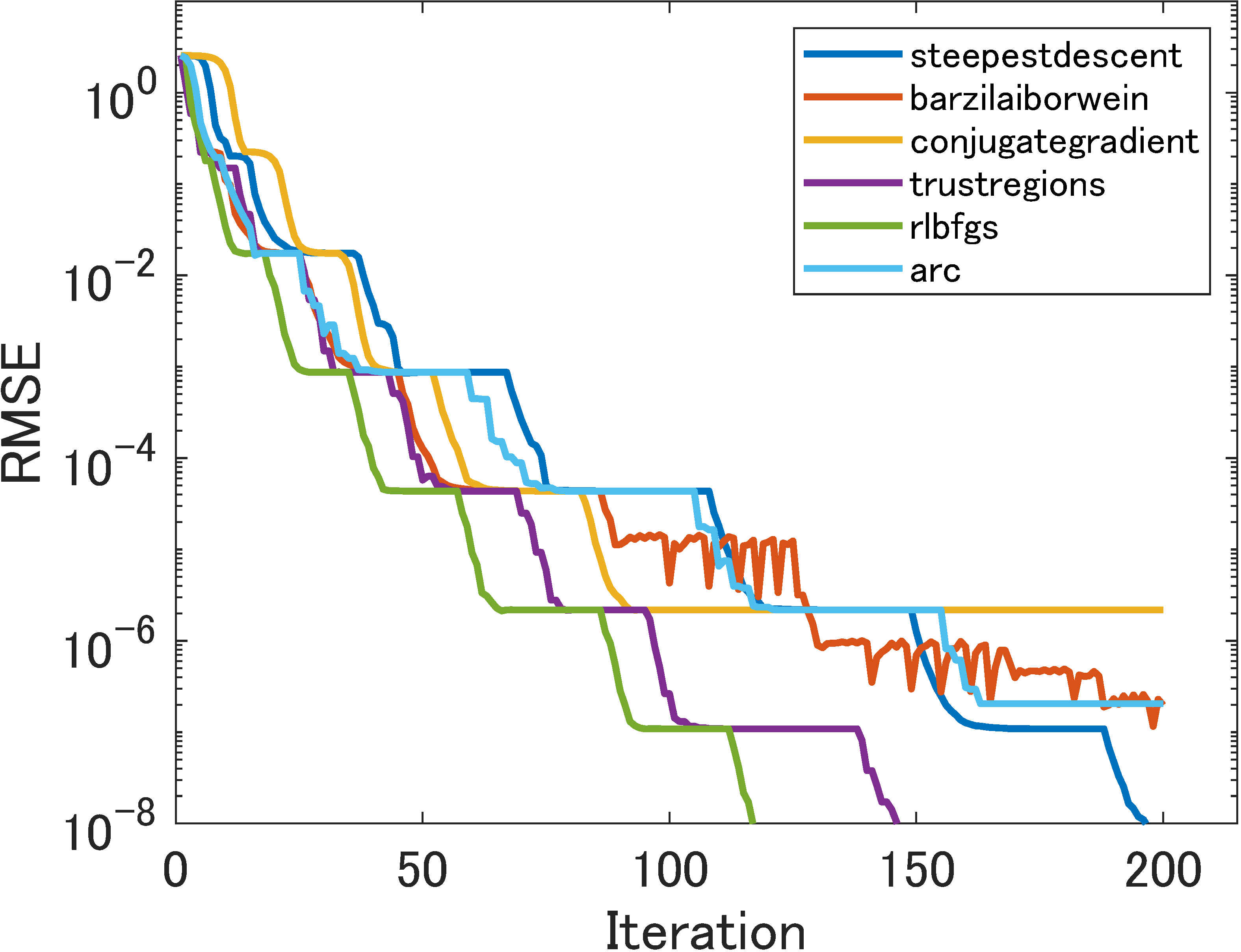}
		\caption{$ \tilde{f}_{5} $}
	\end{subfigure}
	\begin{subfigure}{0.24\textwidth}		%% outliers_500_musiga_2_timeplot_appr_abs	
		\includegraphics[width=\textwidth]{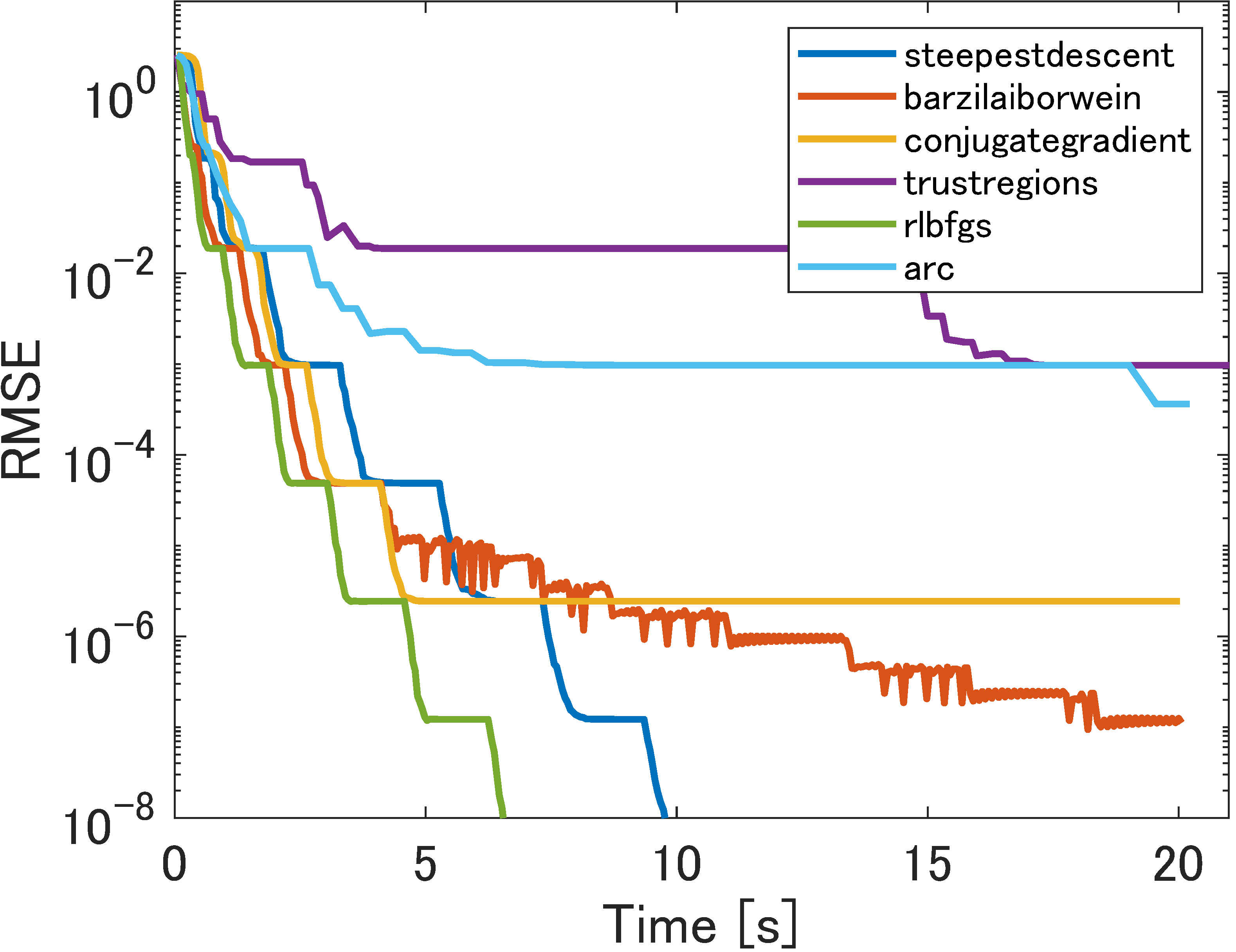}
		\caption{$ \tilde{f}_{1} $}
	\end{subfigure}
	\hfill
	\begin{subfigure}{0.24\textwidth}
		\includegraphics[width=\textwidth]{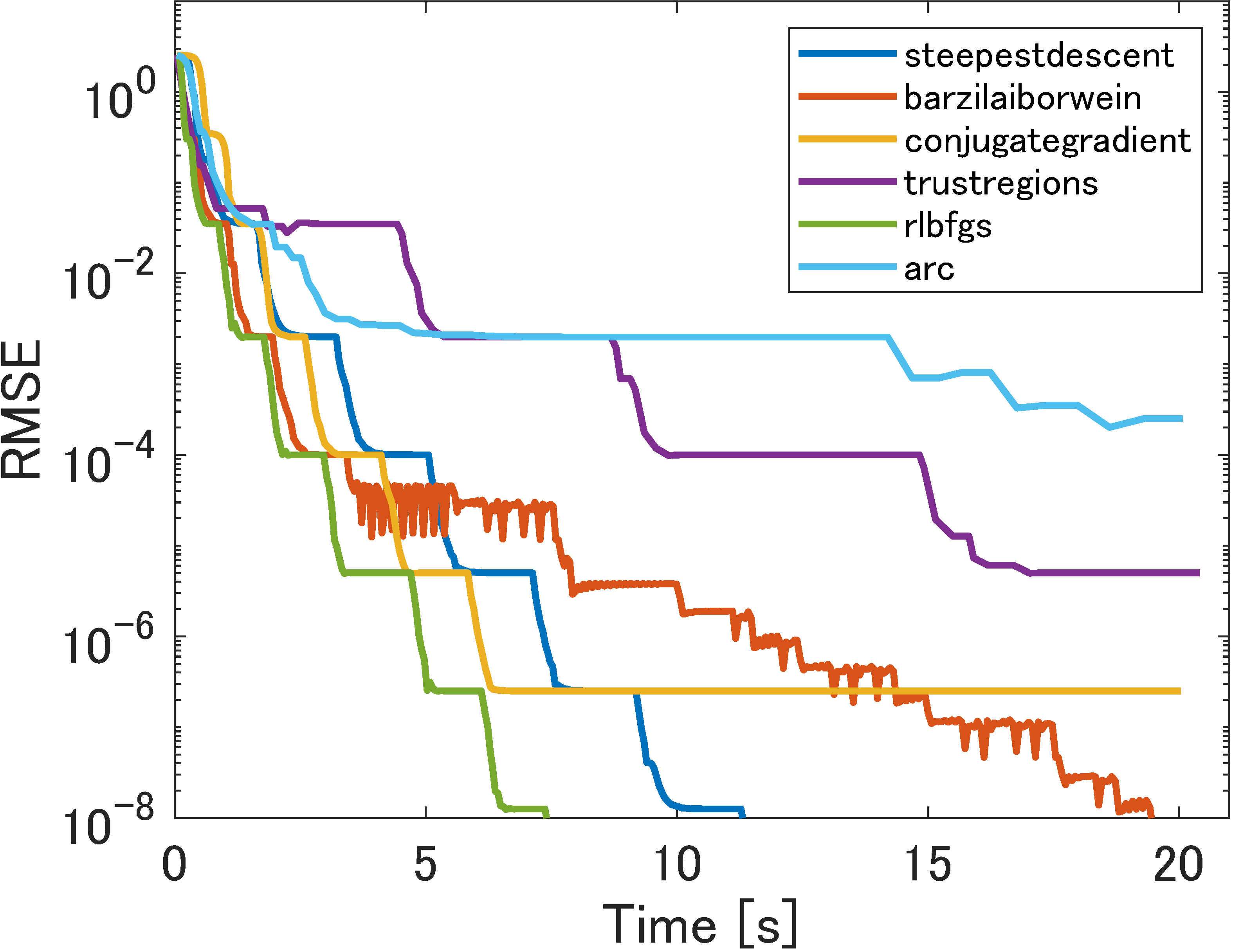}
		\caption{$ \tilde{f}_{2} $}
	\end{subfigure}
	\begin{subfigure}{0.24\textwidth}
		\includegraphics[width=\textwidth]{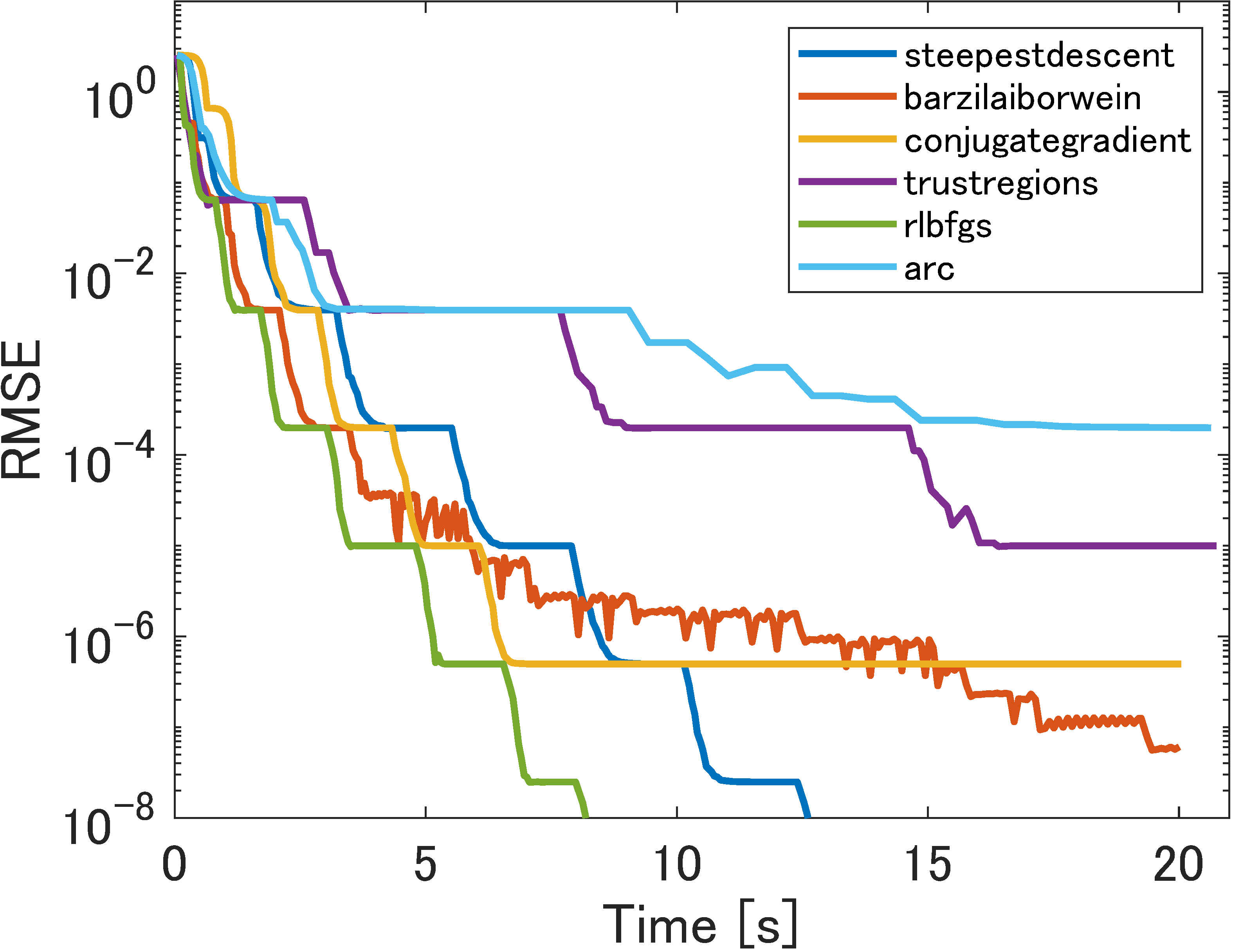}
		\caption{$ \tilde{f}_{3} $}
	\end{subfigure}
	\begin{subfigure}{0.24\textwidth}
		\includegraphics[width=\textwidth]{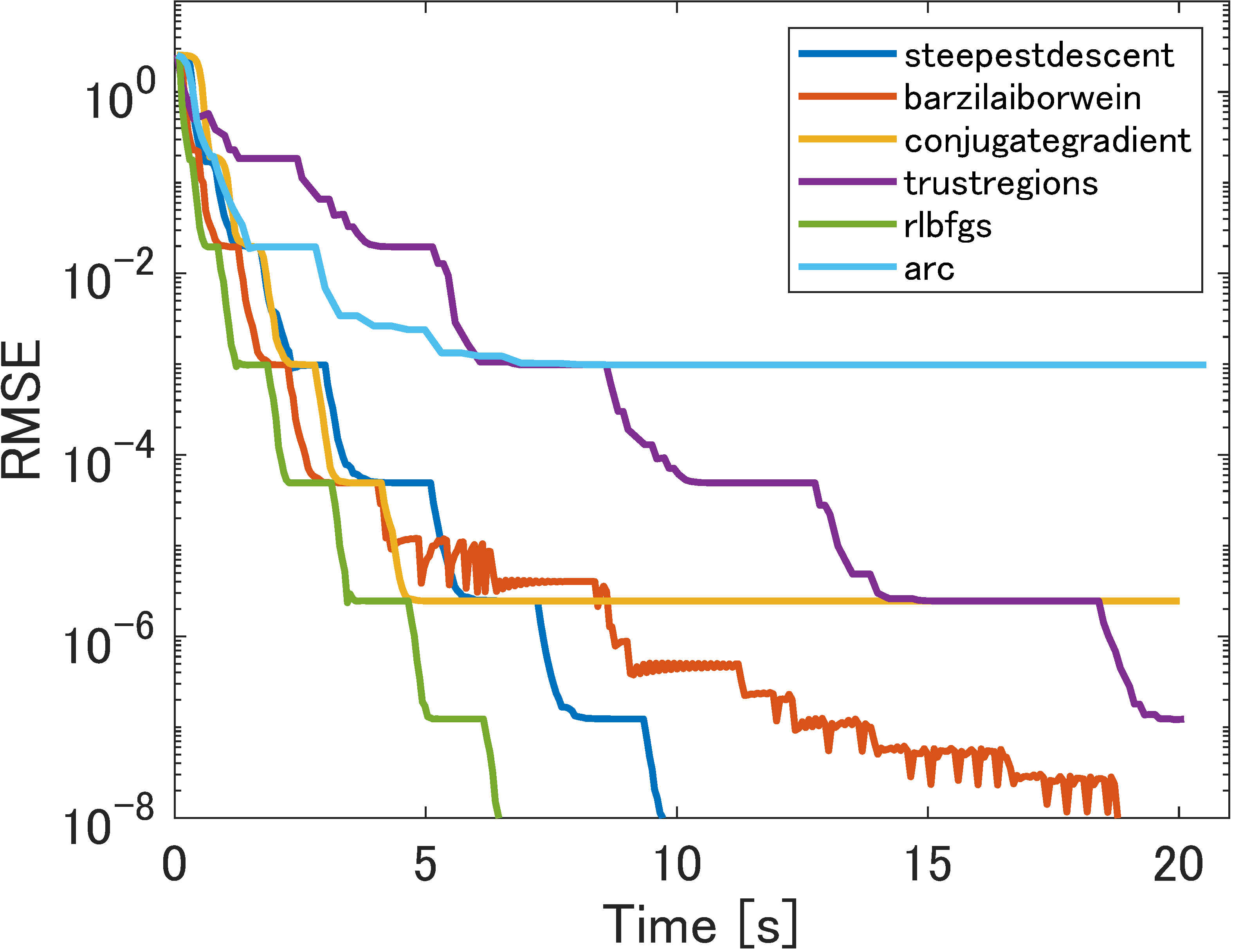}
		\caption{$ \tilde{f}_{4} $}
	\end{subfigure}
	\begin{subfigure}{0.24\textwidth}
		\includegraphics[width=\textwidth]{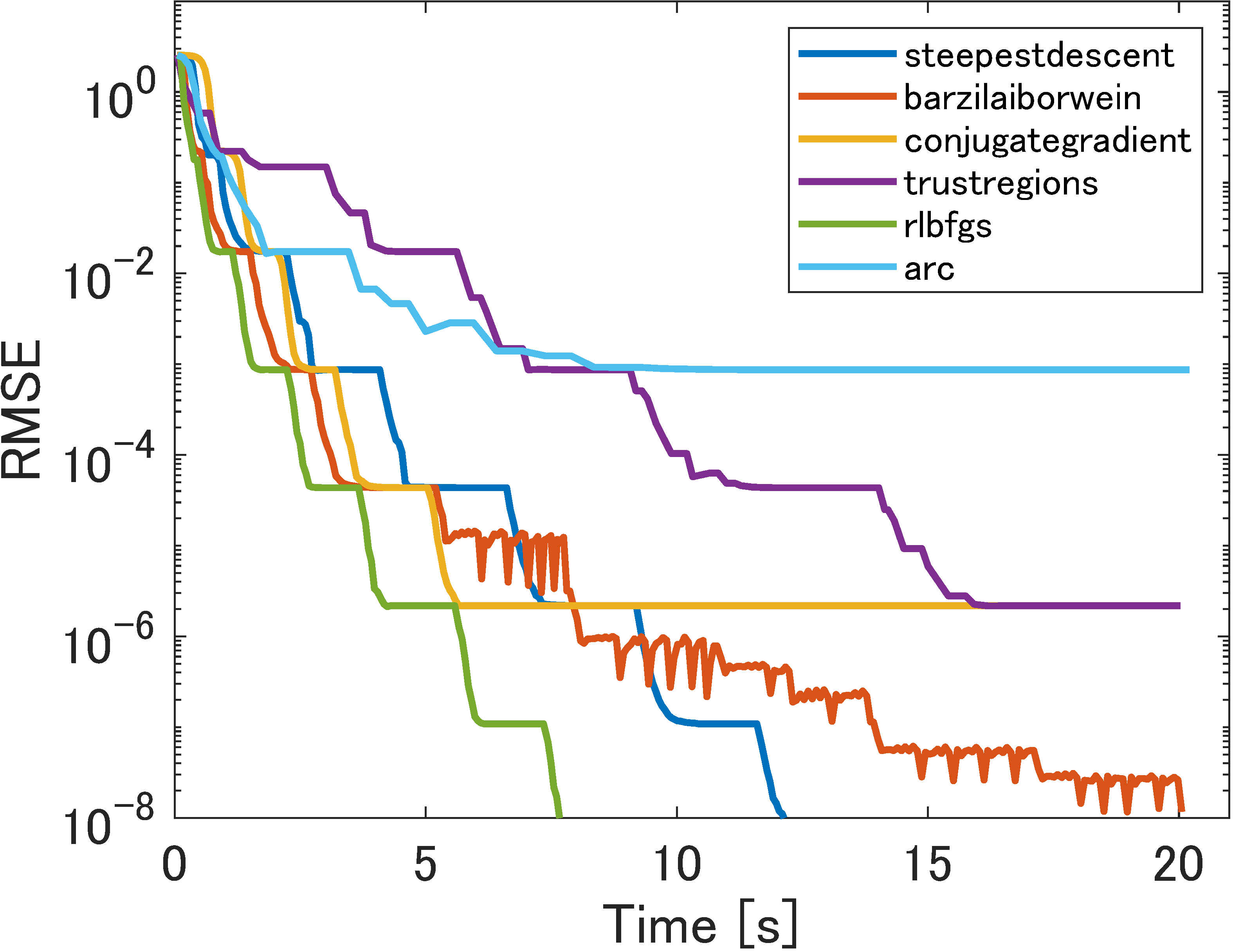}
		\caption{$ \tilde{f}_{5} $}
	\end{subfigure}
	\caption{Low-rank matrix completion with outliers for two rank-10 $500 \times 500$ matrices by using different smoothing functions in Table \ref{table:smlist}.
			(a)--(j) corresponds to one matrix with outliers created by using $\mu_{N}=\sigma_{N}=0.1$, while (k)--(t) corresponds to the other with outliers created by using $\mu_{N}=\sigma_{N}=1$.
			(a)--(e) and (k)--(o) comprise the running iteration comparison; (f)--(j) and (p)--(t) comprise the  time comparison.}
	\label{fig:outliers}
\end{figure}

\begin{figure}[!b]\centering
	\begin{subfigure}{0.24\textwidth} 	%% outliers_5000_musiga_1_iterplot_appr_abs
		\includegraphics[width=\textwidth]{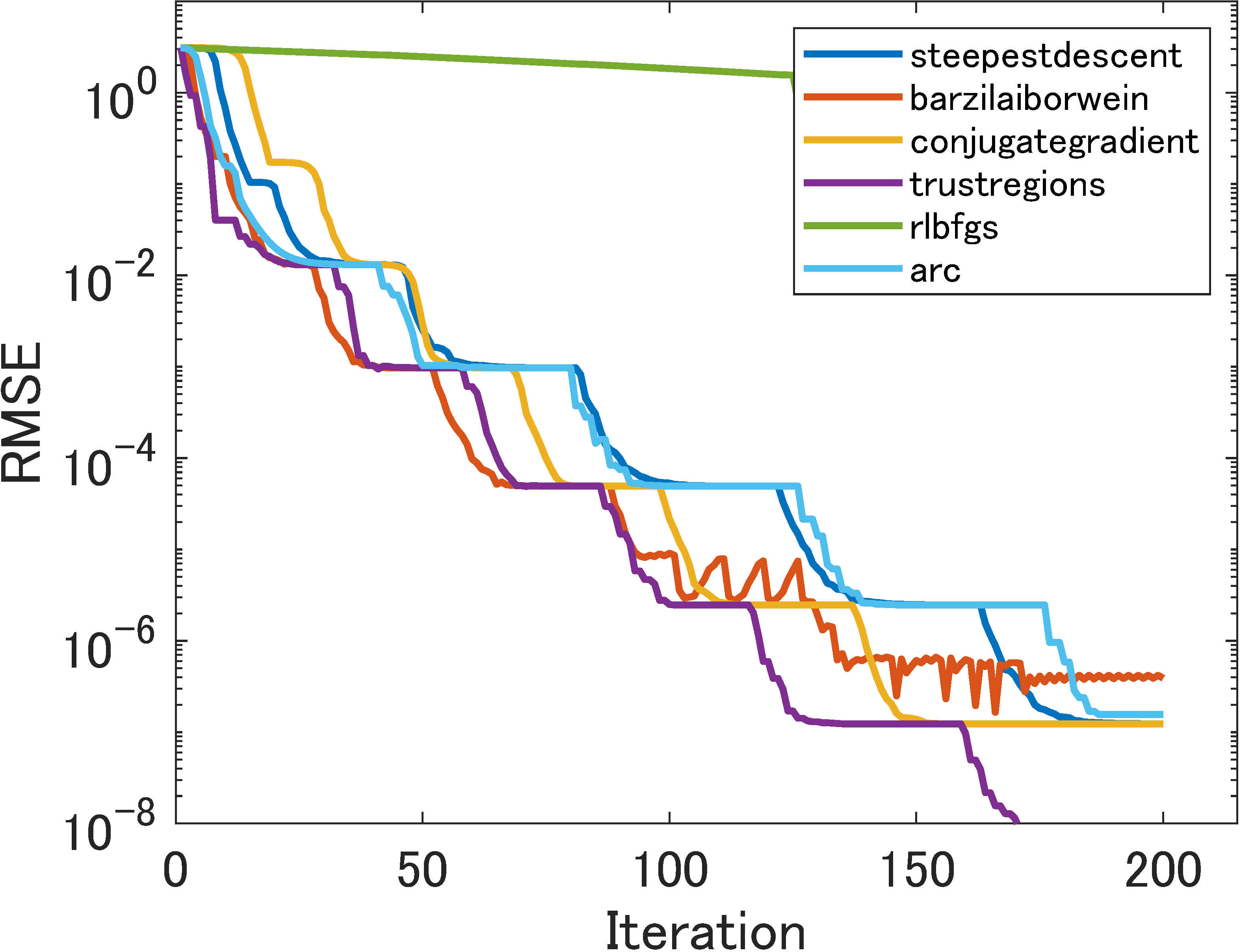}
		\caption{$ \tilde{f}_{1} $}
	\end{subfigure}
	\begin{subfigure}{0.24\textwidth}
		\includegraphics[width=\textwidth]{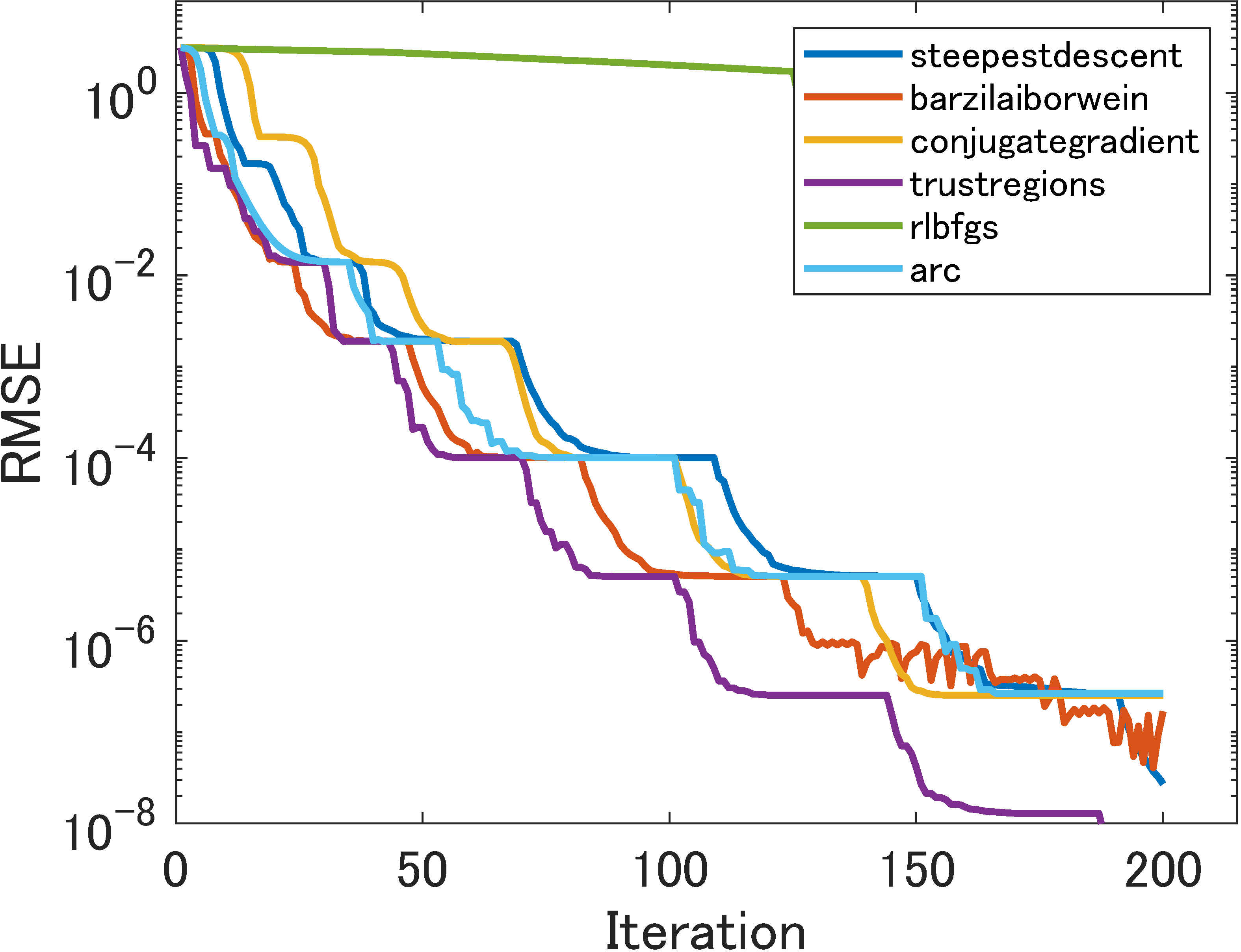}
		\caption{$ \tilde{f}_{2} $}
	\end{subfigure}
	\begin{subfigure}{0.24\textwidth}
		\includegraphics[width=\textwidth]{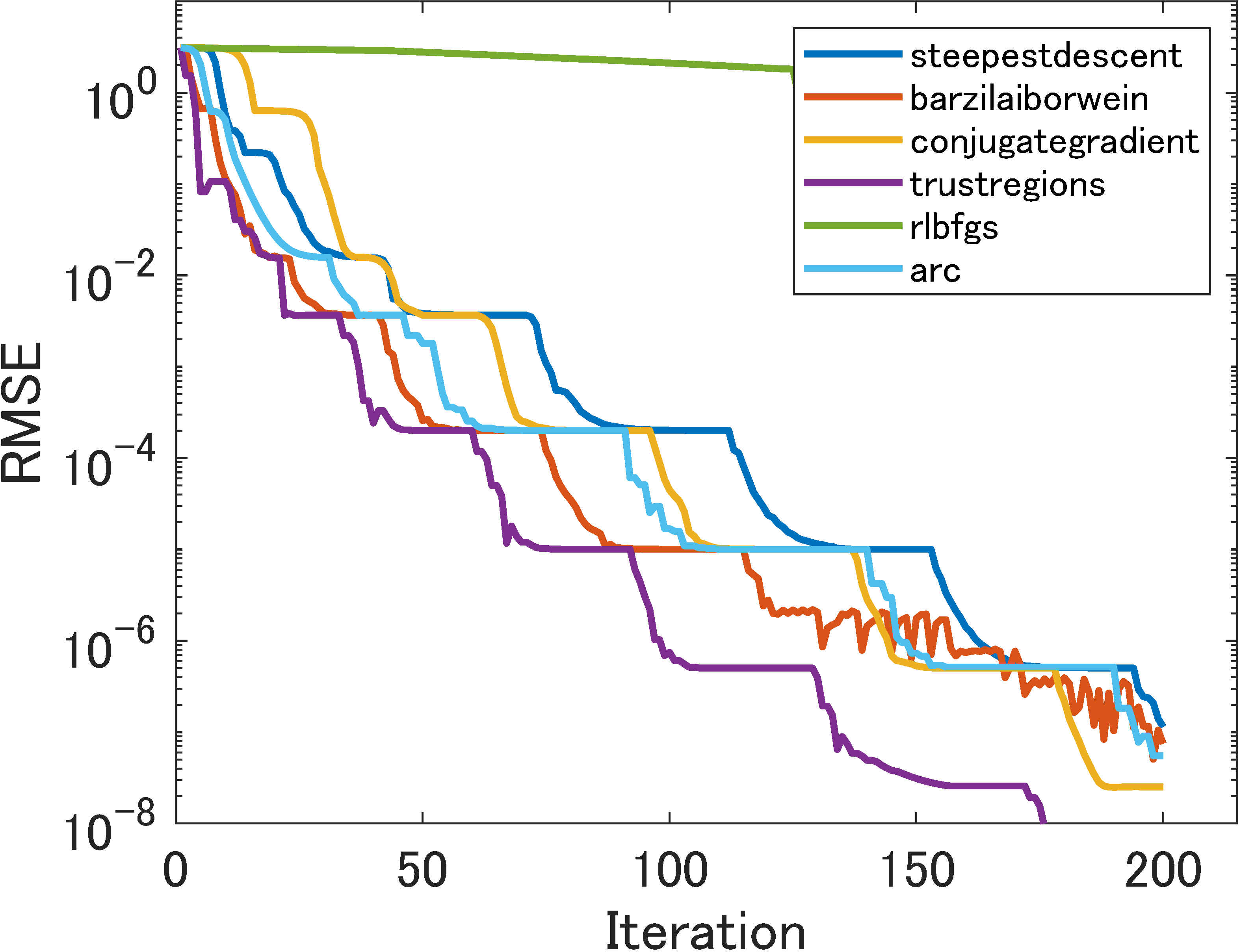}
		\caption{$ \tilde{f}_{3} $}
	\end{subfigure}
	\begin{subfigure}{0.24\textwidth}
		\includegraphics[width=\textwidth]{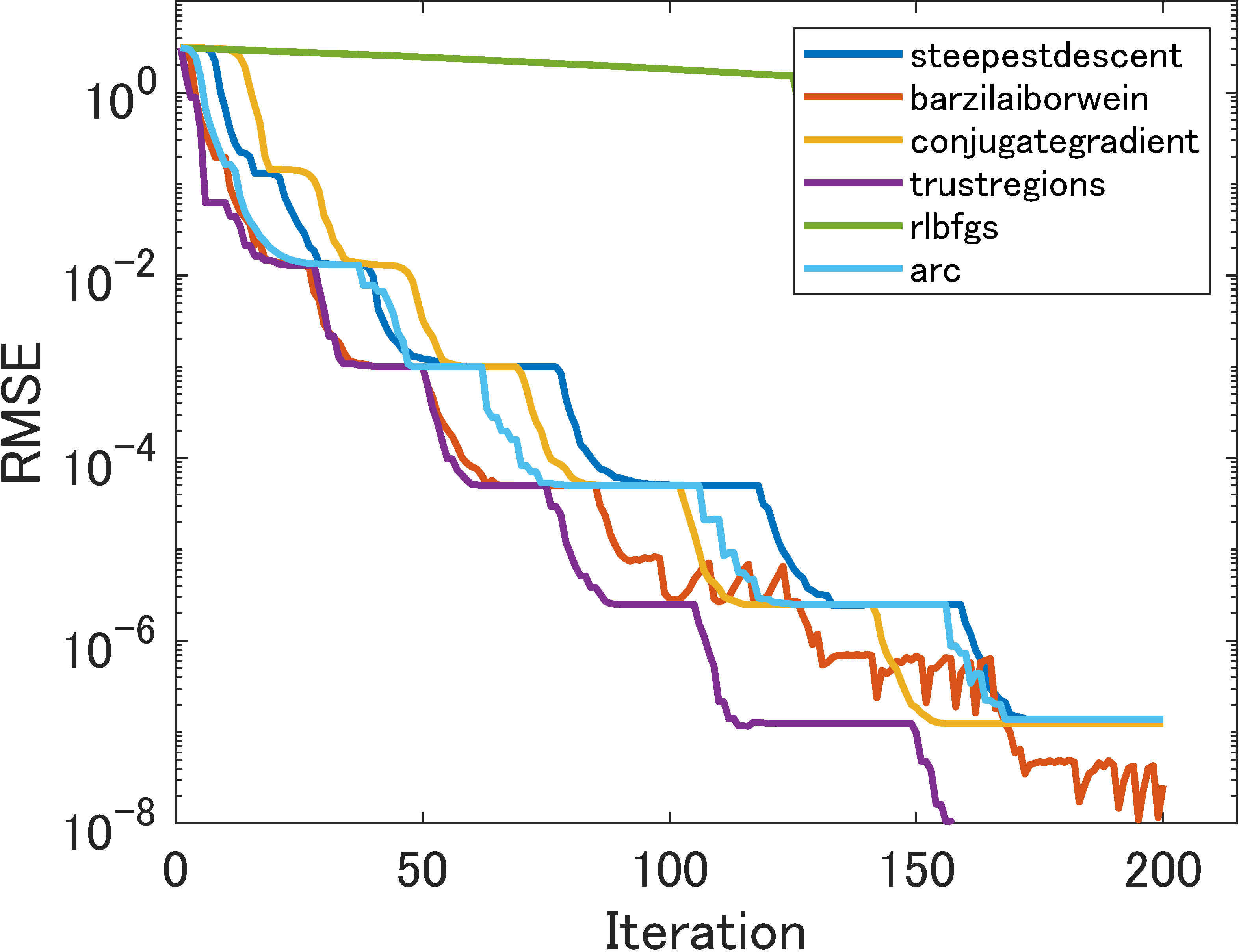}
		\caption{$ \tilde{f}_{4} $}
	\end{subfigure}
	\hfill 
	\begin{subfigure}{0.24\textwidth}
		\includegraphics[width=\textwidth]{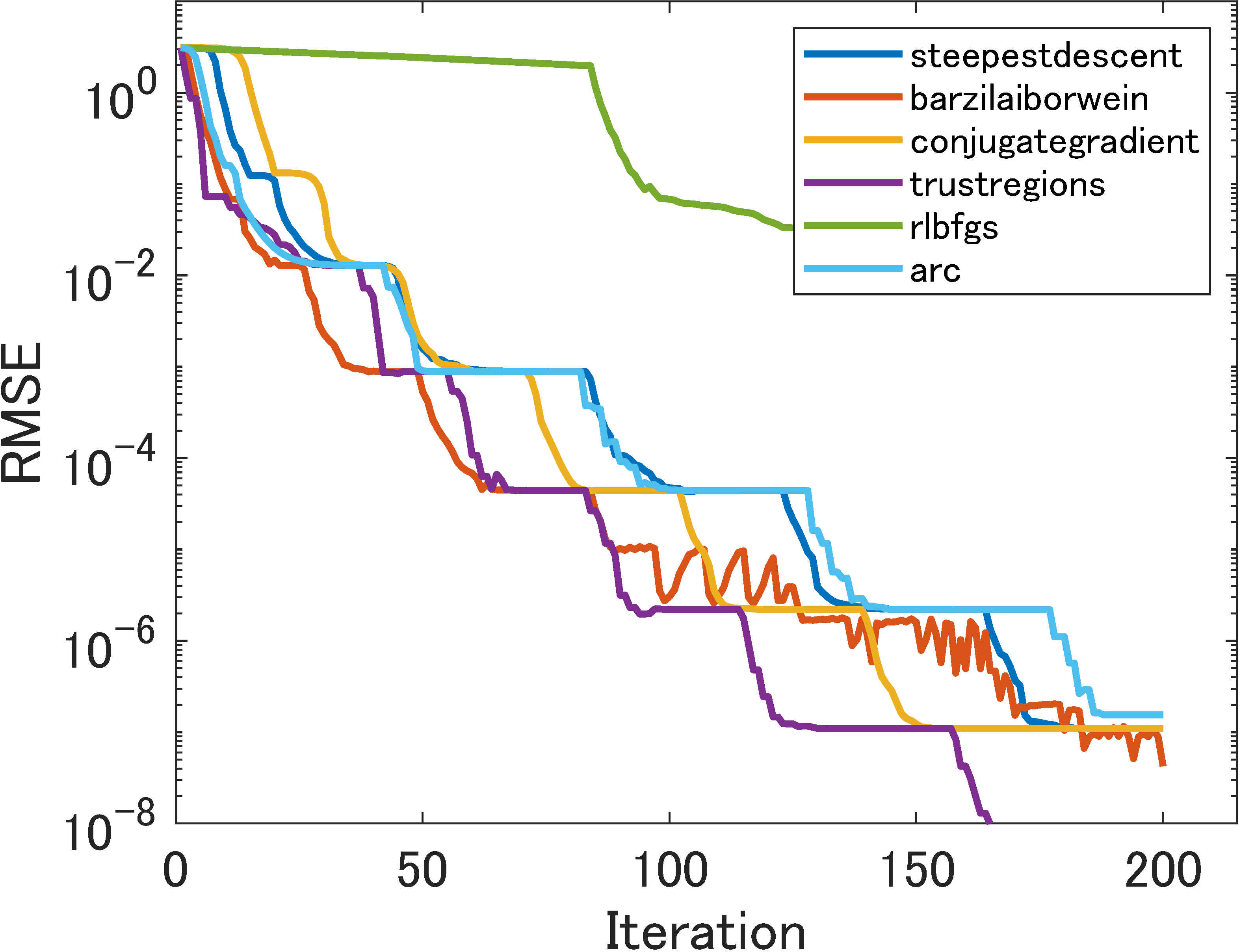}
		\caption{$ \tilde{f}_{5} $}
	\end{subfigure}
	\begin{subfigure}{0.24\textwidth}  %% outliers_5000_musiga_1_timeplot_appr_abs
		\includegraphics[width=\textwidth]{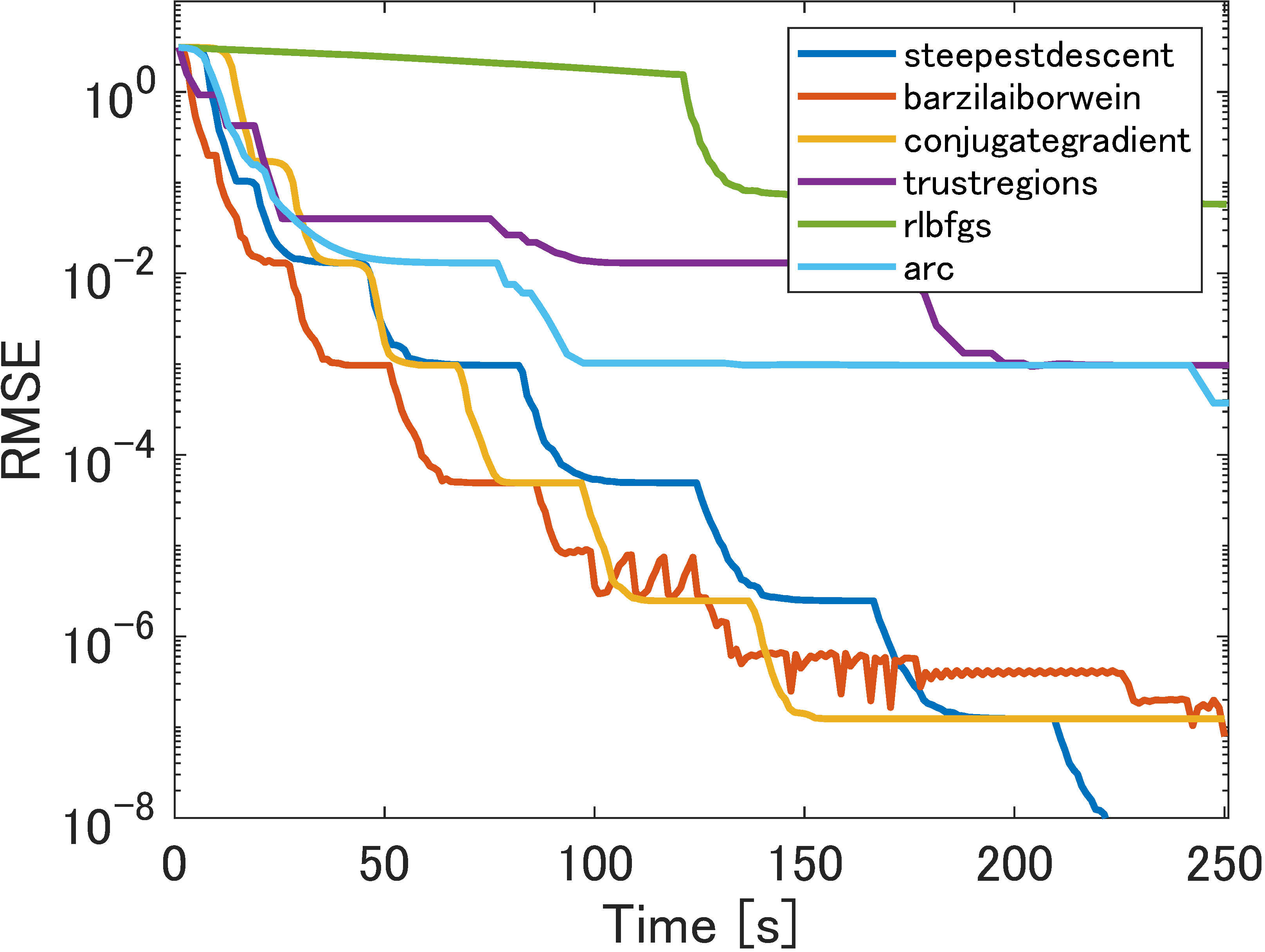}
		\caption{$ \tilde{f}_{1} $}
	\end{subfigure}
	\begin{subfigure}{0.24\textwidth}
		\includegraphics[width=\textwidth]{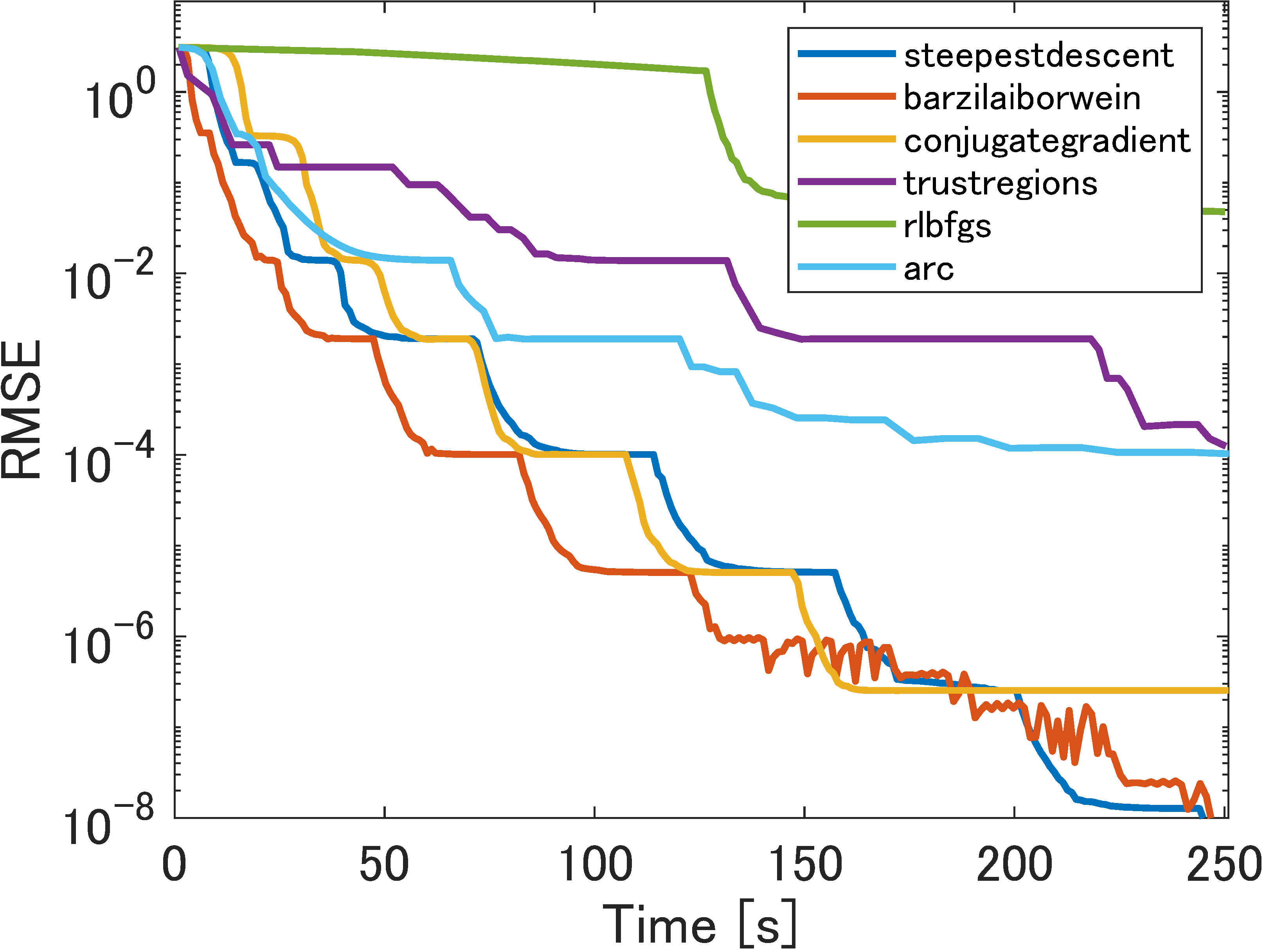}
		\caption{$ \tilde{f}_{2} $}
	\end{subfigure}
	\begin{subfigure}{0.24\textwidth}
		\includegraphics[width=\textwidth]{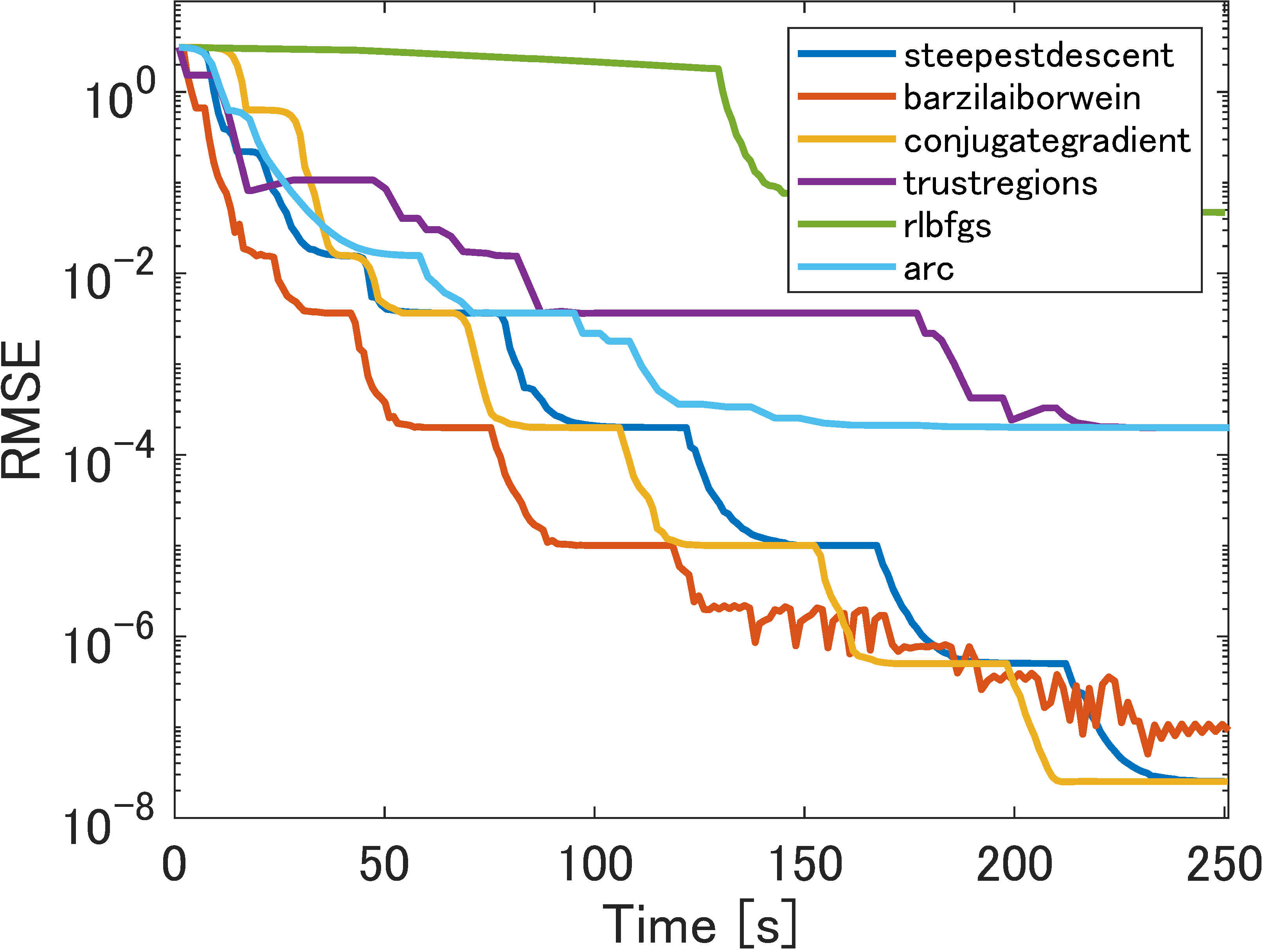}
		\caption{$ \tilde{f}_{3} $}
	\end{subfigure}
	\hfill
	\begin{subfigure}{0.24\textwidth}
		\includegraphics[width=\textwidth]{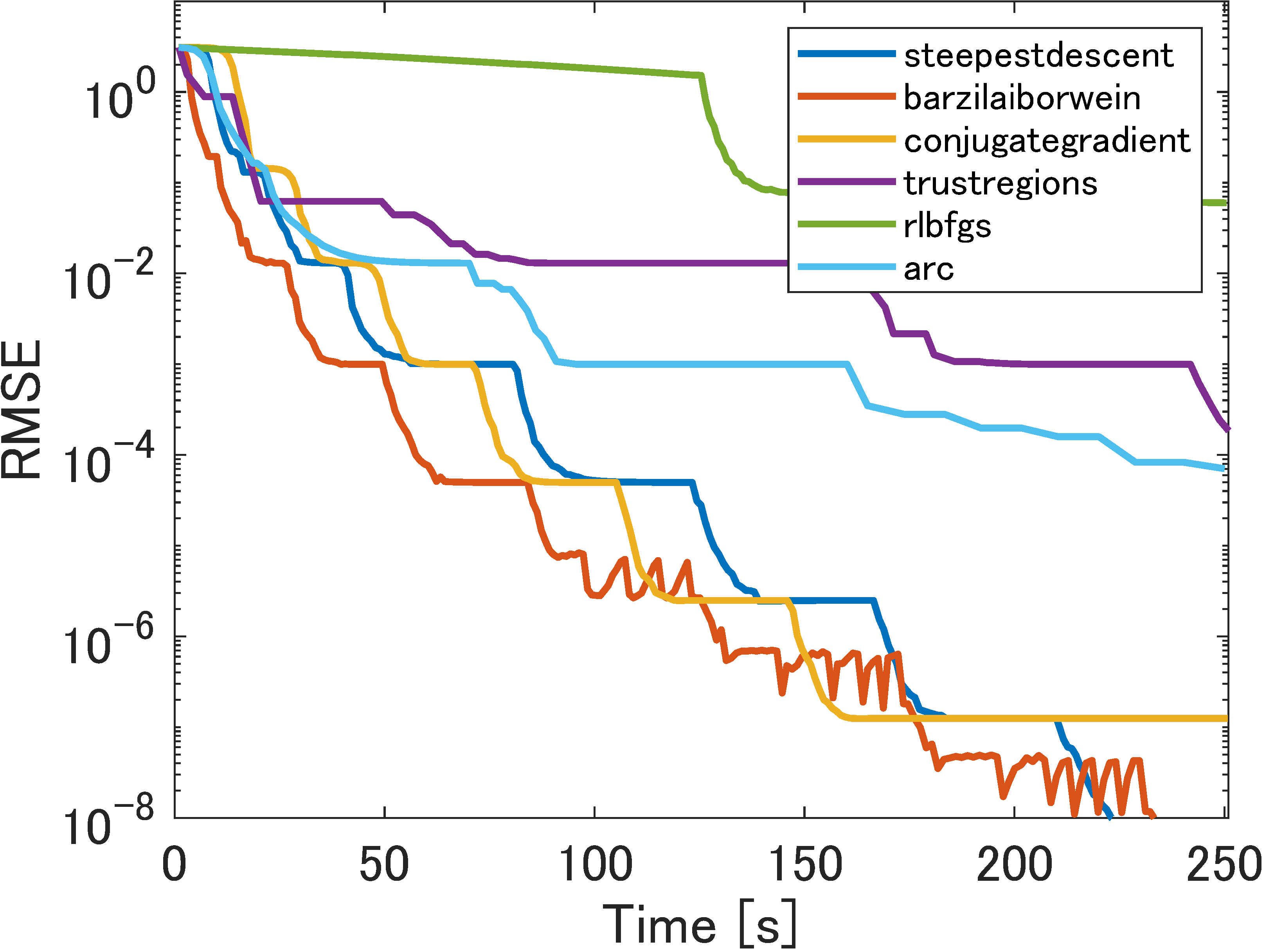}
		\caption{$ \tilde{f}_{4} $}
	\end{subfigure}
	\begin{subfigure}{0.24\textwidth}
		\includegraphics[width=\textwidth]{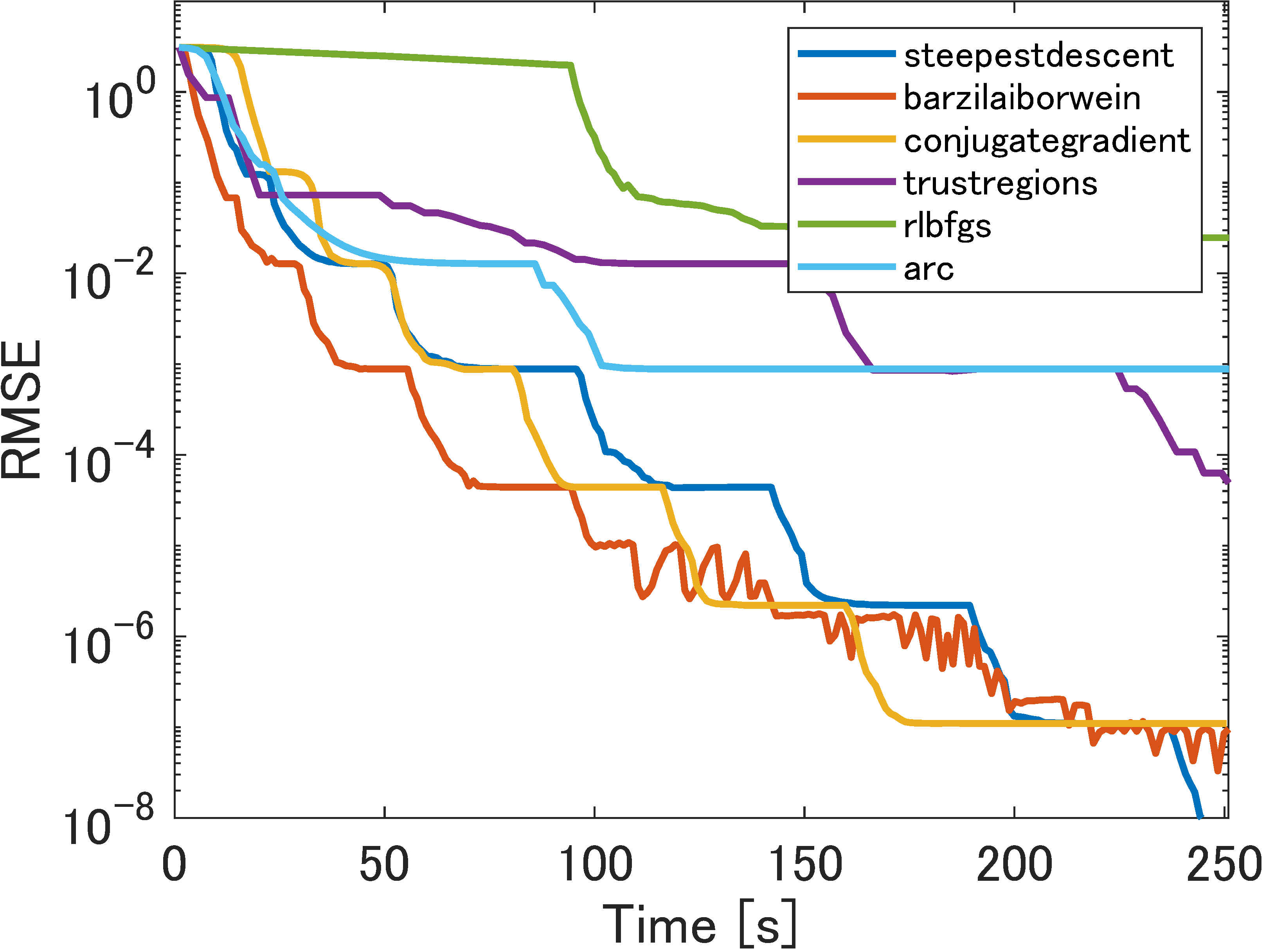}
		\caption{$ \tilde{f}_{5} $}
	\end{subfigure}
	\begin{subfigure}{0.24\textwidth}	%% outliers_5000_musiga_2_iterplot_appr_abs	
		\includegraphics[width=\textwidth]{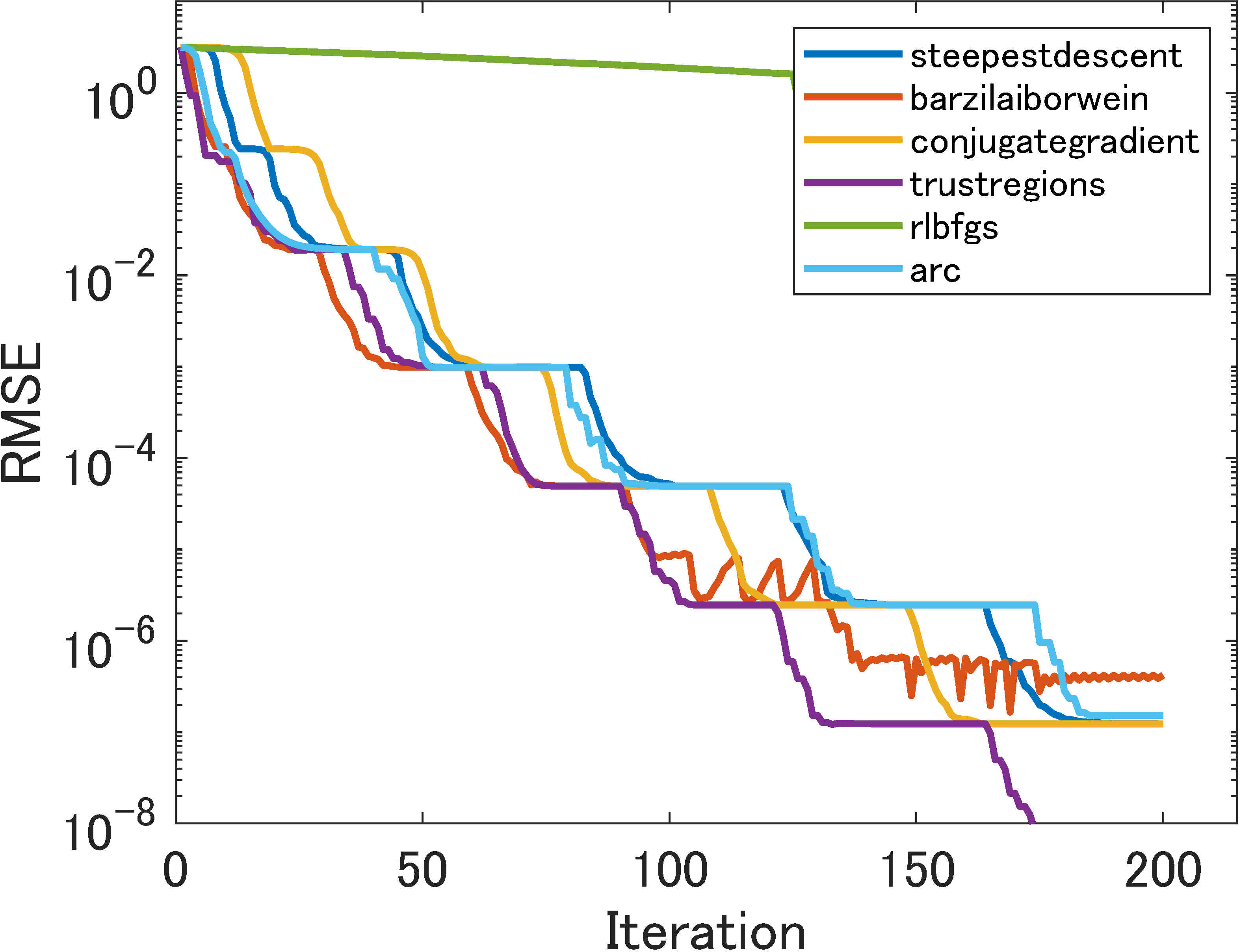}
		\caption{$ \tilde{f}_{1} $}
	\end{subfigure}
	\begin{subfigure}{0.24\textwidth}
		\includegraphics[width=\textwidth]{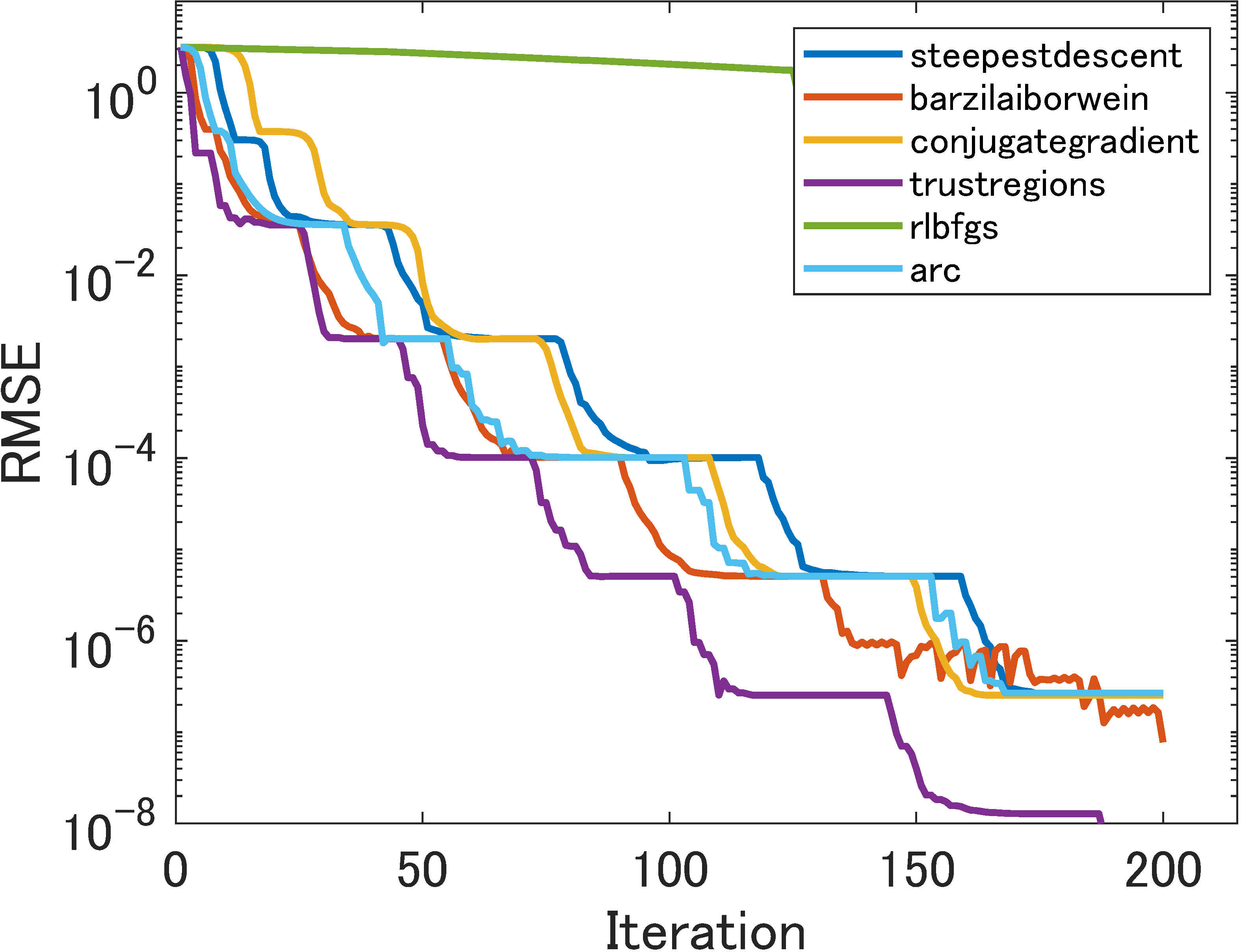}
		\caption{$ \tilde{f}_{2} $}
	\end{subfigure}
	\hfill
	\begin{subfigure}{0.24\textwidth}
		\includegraphics[width=\textwidth]{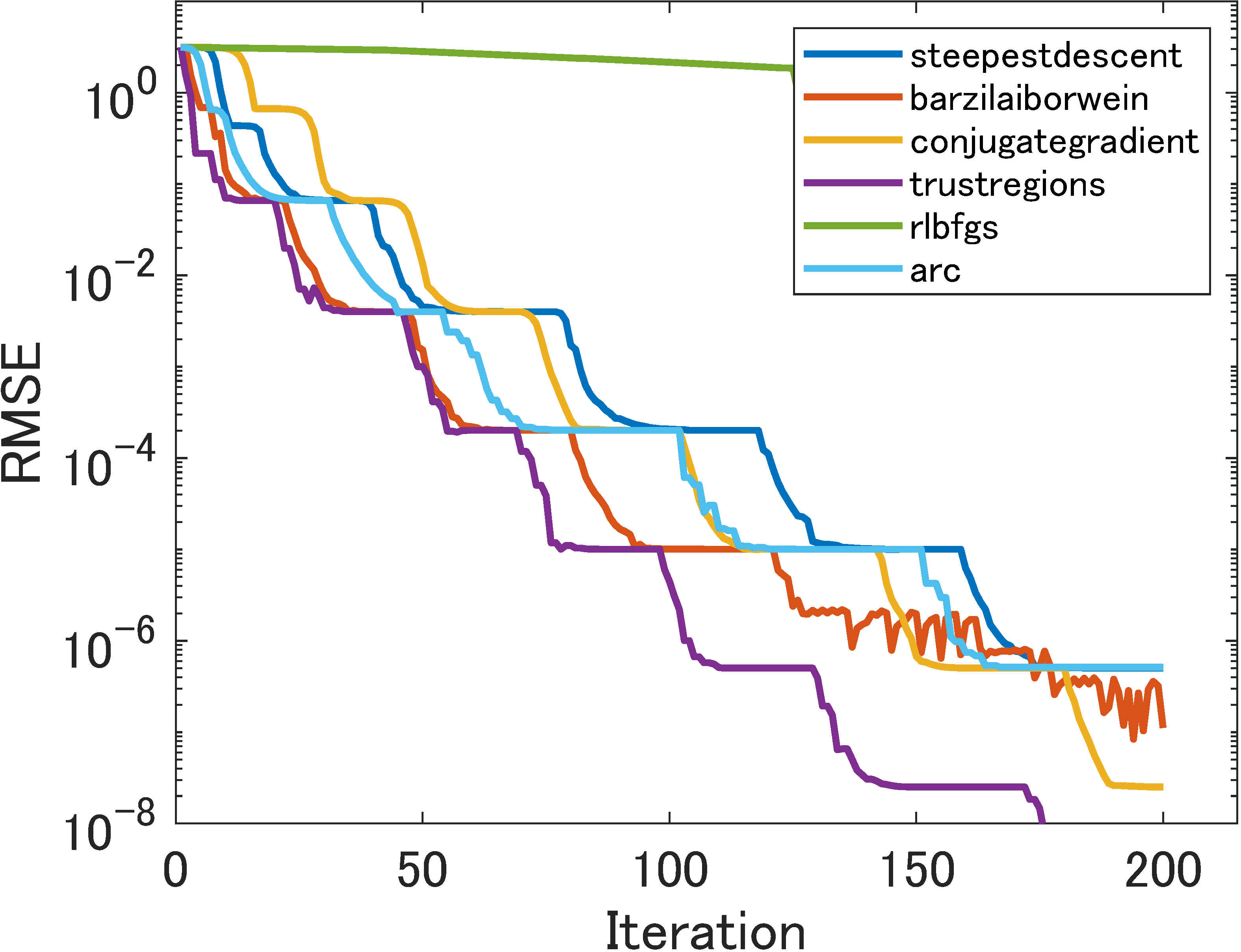}
		\caption{$ \tilde{f}_{3} $}
	\end{subfigure}
	\begin{subfigure}{0.24\textwidth}
		\includegraphics[width=\textwidth]{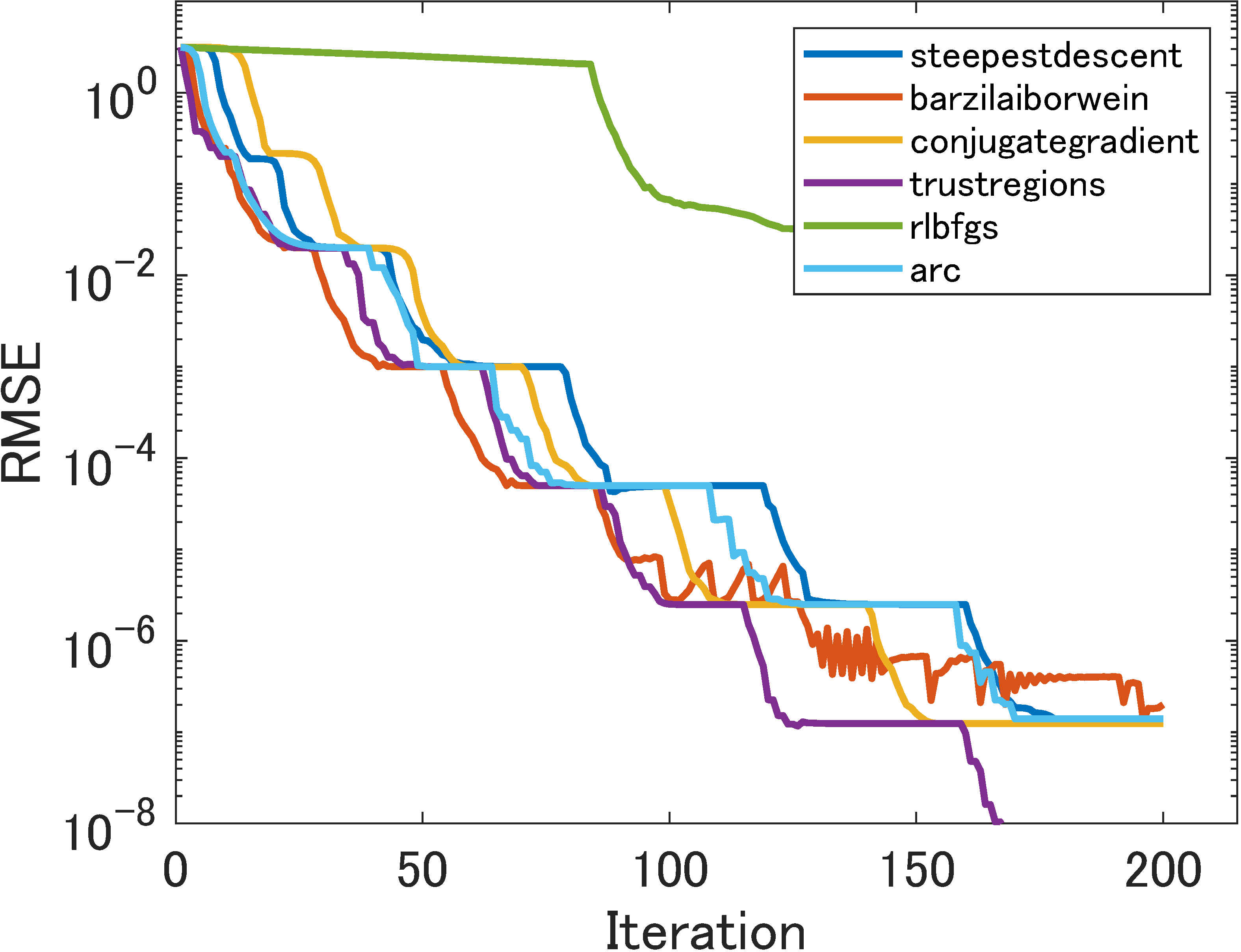}
		\caption{$ \tilde{f}_{4} $}
	\end{subfigure}
	\begin{subfigure}{0.24\textwidth}
		\includegraphics[width=\textwidth]{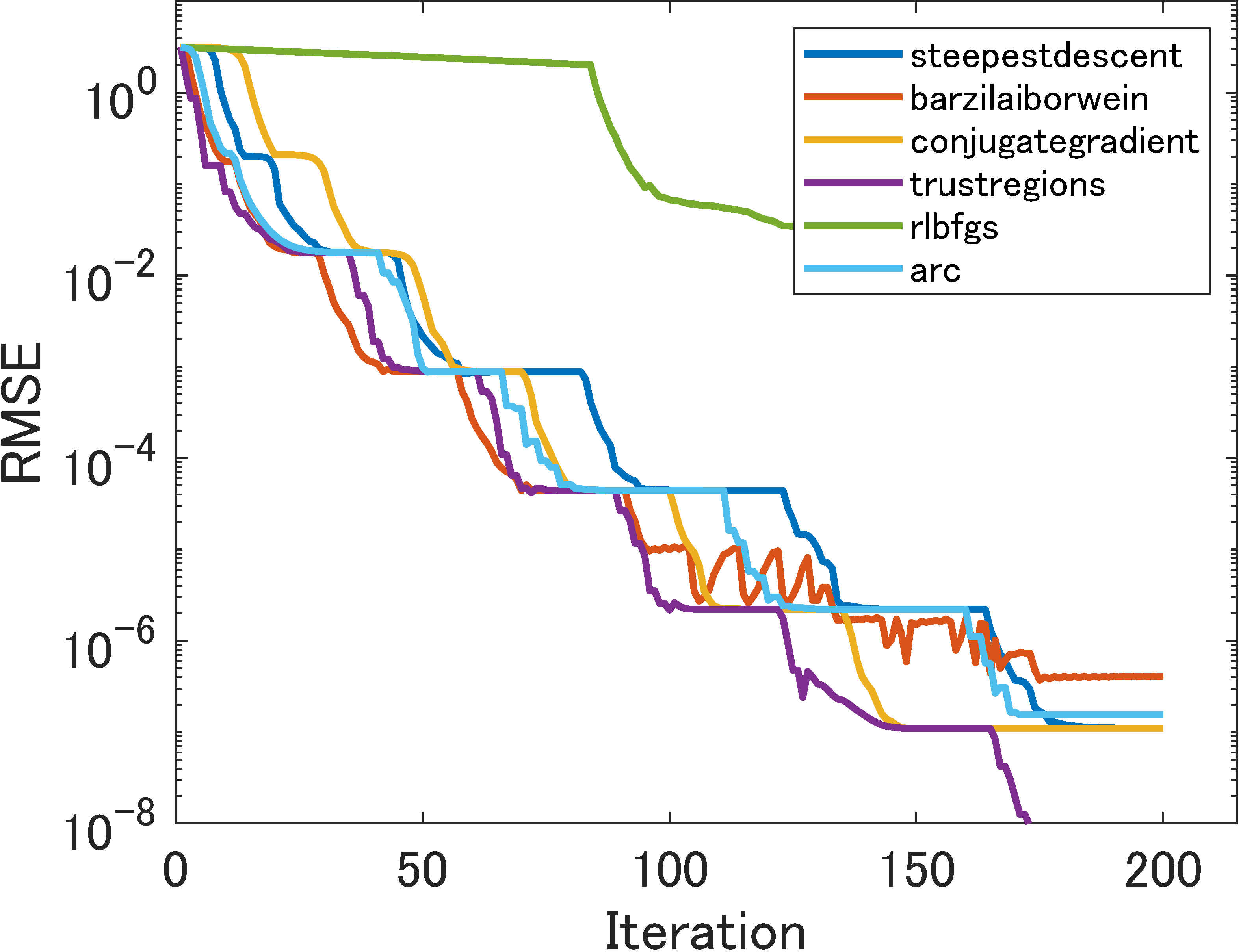}
		\caption{$ \tilde{f}_{5} $}
	\end{subfigure}
	\begin{subfigure}{0.24\textwidth}		%% outliers_5000_musiga_2_timeplot_appr_abs	
		\includegraphics[width=\textwidth]{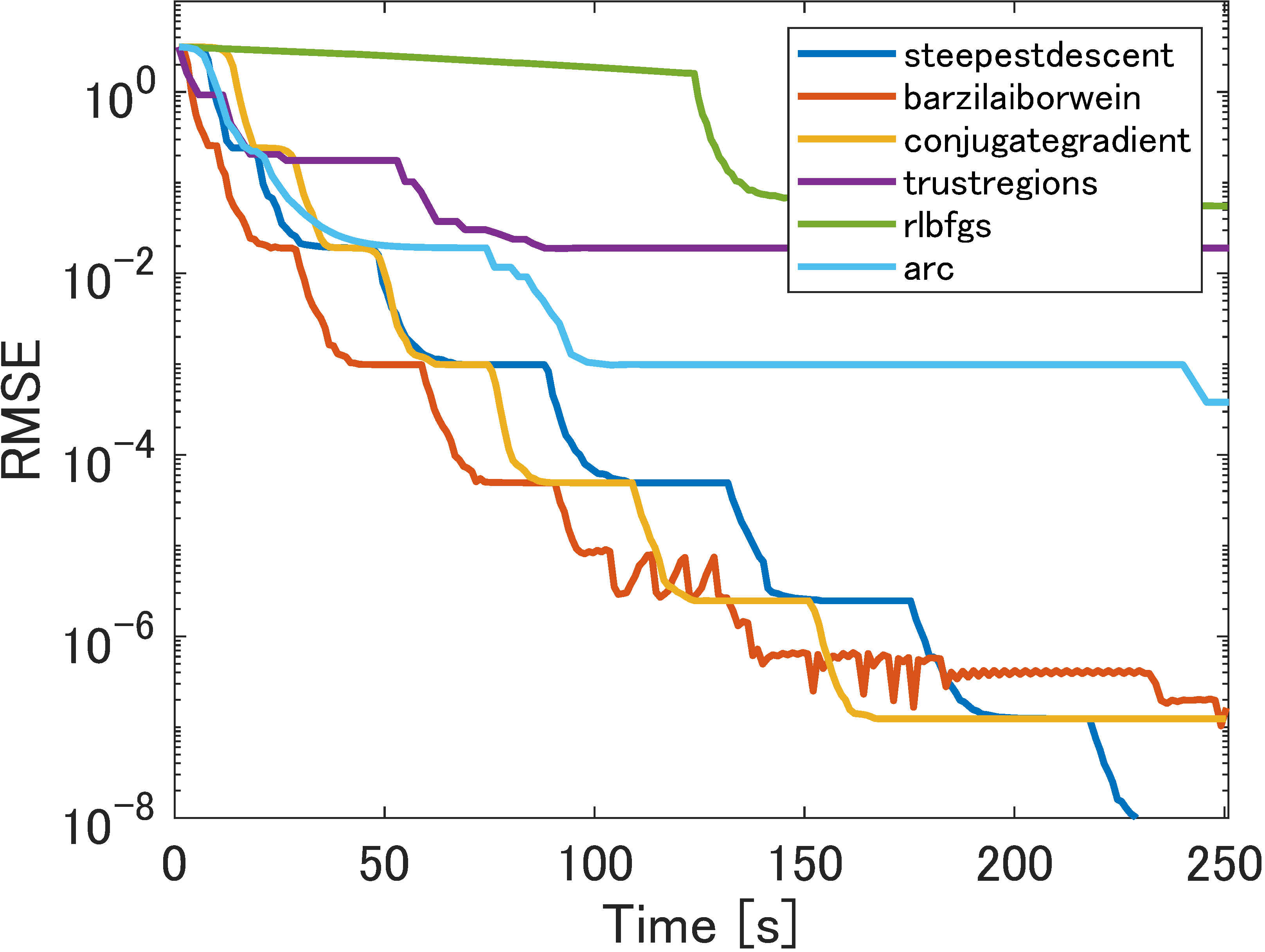}
		\caption{$ \tilde{f}_{1} $}
	\end{subfigure}
	\hfill
	\begin{subfigure}{0.24\textwidth}
		\includegraphics[width=\textwidth]{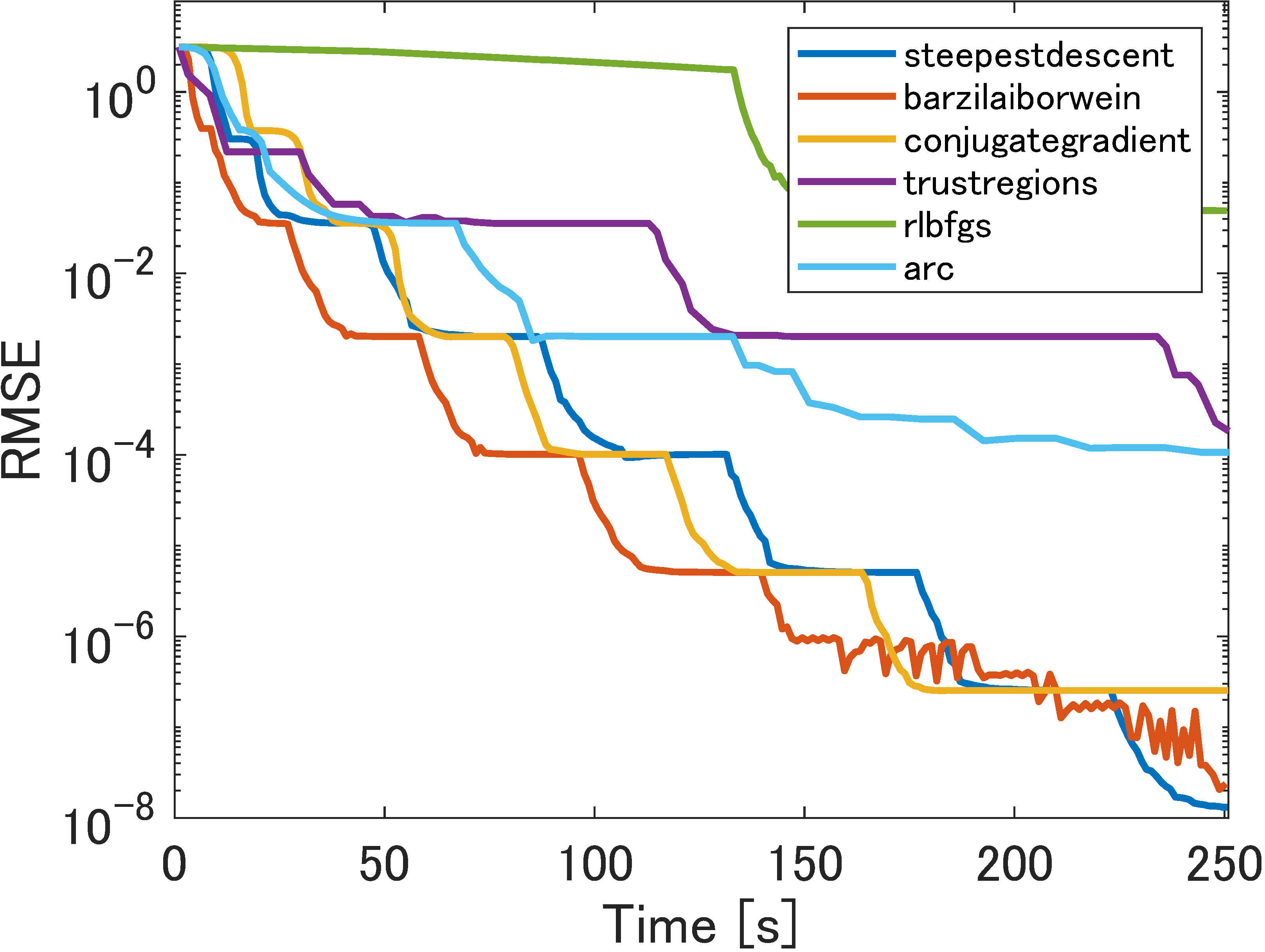}
		\caption{$ \tilde{f}_{2} $}
	\end{subfigure}
	\begin{subfigure}{0.24\textwidth}
		\includegraphics[width=\textwidth]{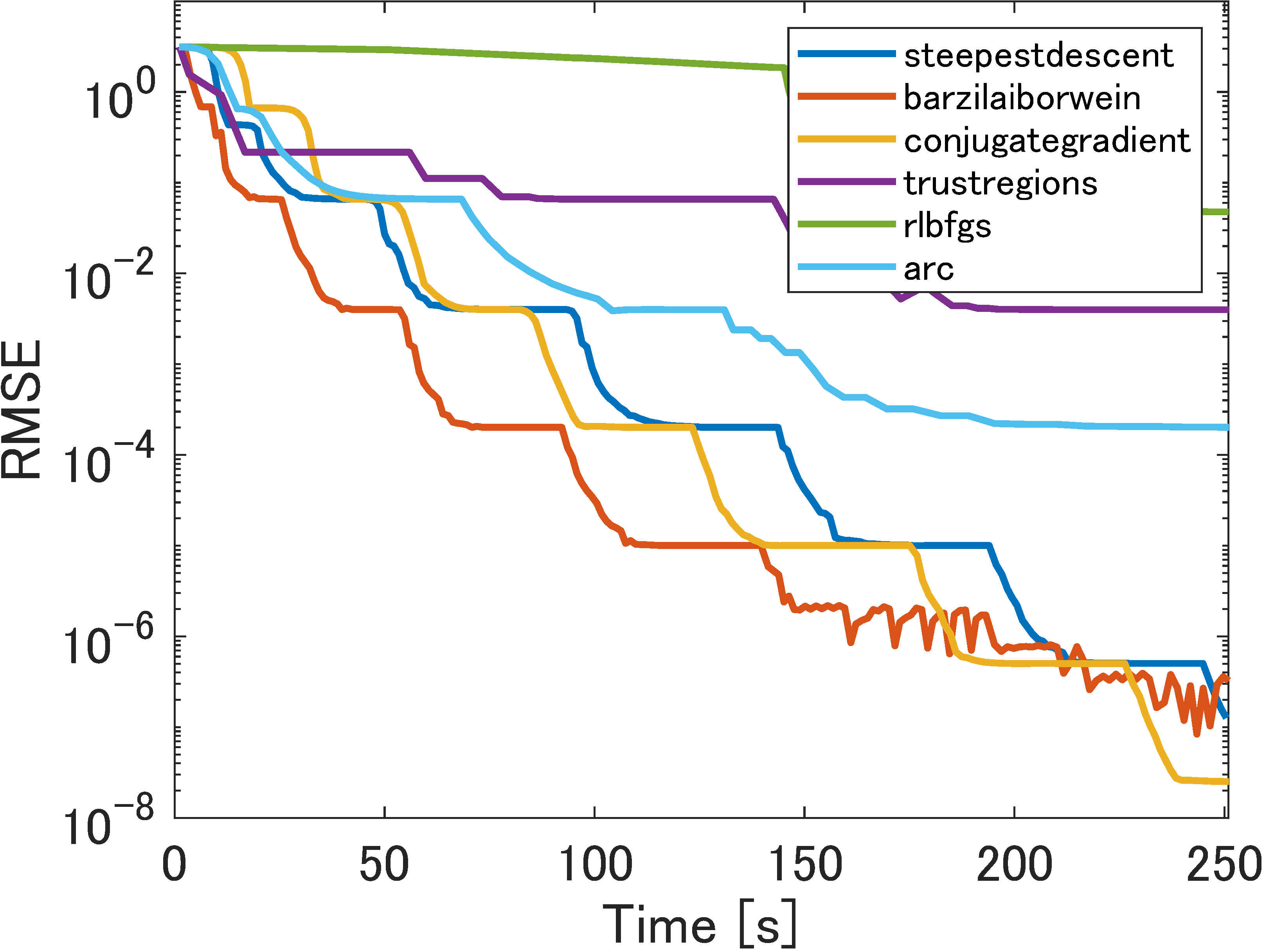}
		\caption{$ \tilde{f}_{3} $}
	\end{subfigure}
	\begin{subfigure}{0.24\textwidth}
		\includegraphics[width=\textwidth]{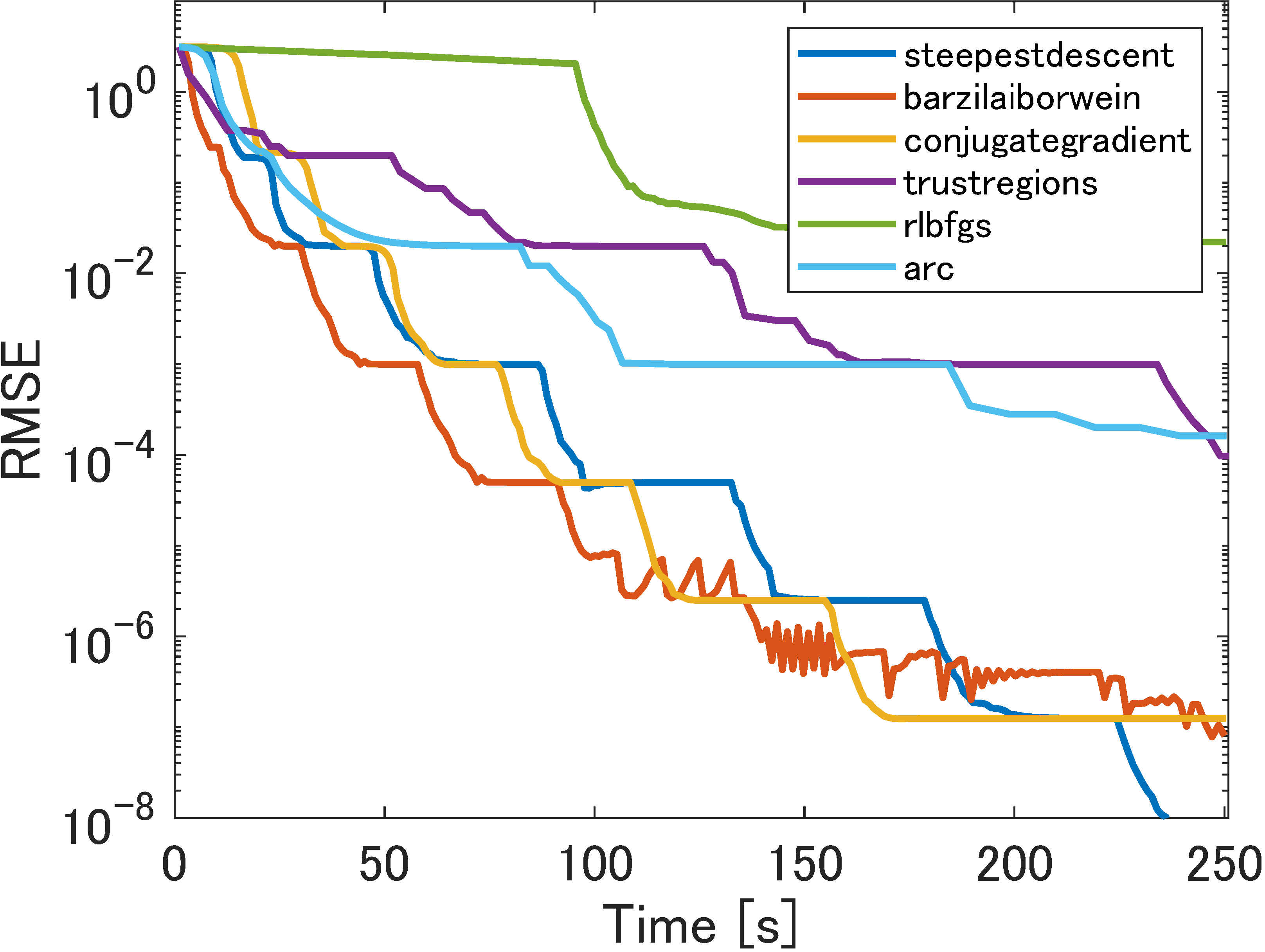}
		\caption{$ \tilde{f}_{4} $}
	\end{subfigure}
	\begin{subfigure}{0.24\textwidth}
		\includegraphics[width=\textwidth]{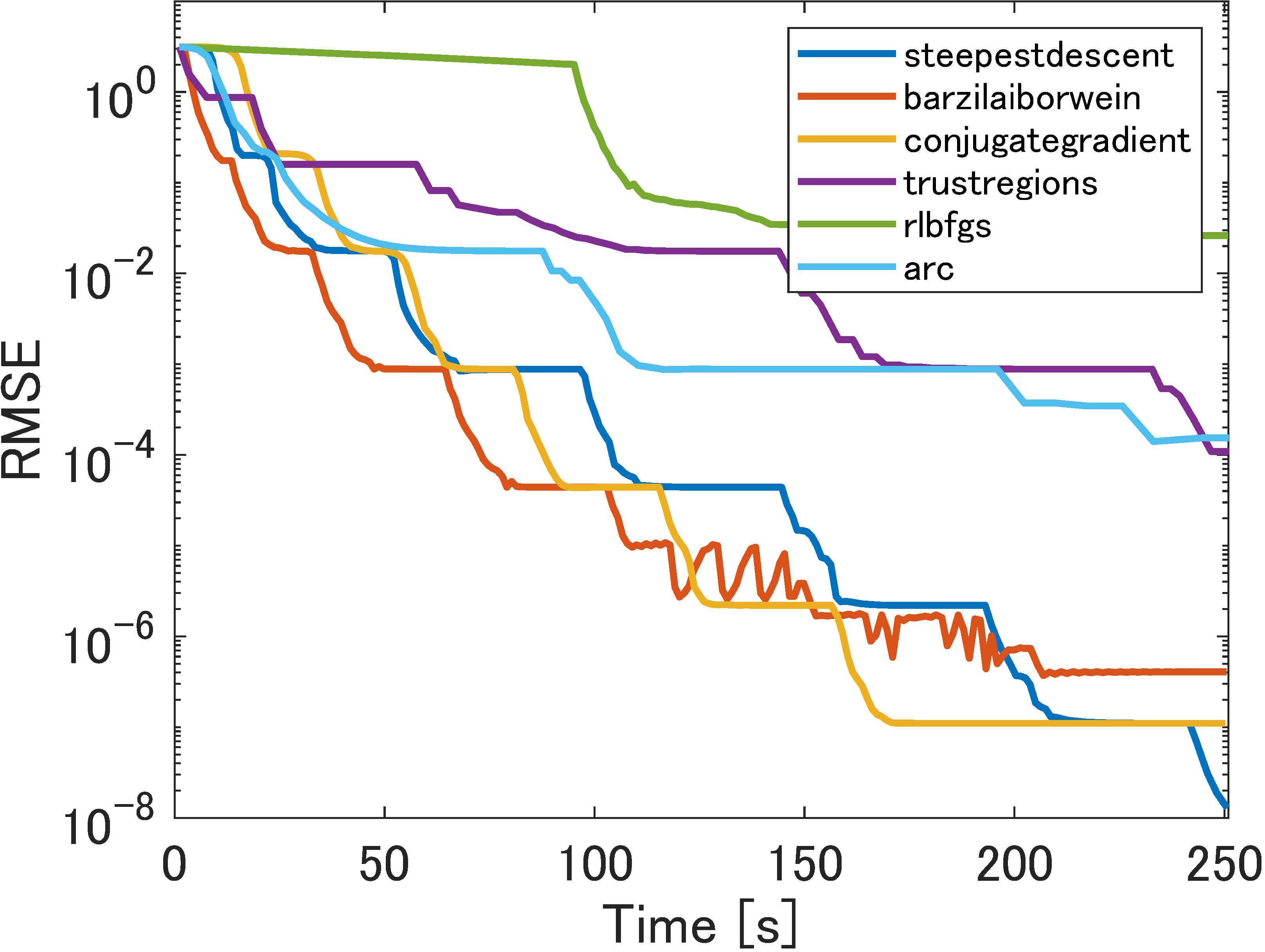}
		\caption{$ \tilde{f}_{5} $}
	\end{subfigure}
	\caption{Low-rank matrix completion with outliers for two rank-10 $5000 \times 5000$ matrices by using different smoothing functions in Table \ref{table:smlist}.
		(a)--(j) corresponds to one matrix with outliers created by using $\mu_{N}=\sigma_{N}=0.1$, while (k)--(t) corresponds to the other with outliers created by using $\mu_{N}=\sigma_{N}=1$.
		(a)--(e) and (k)--(o) comprise the running iteration comparison; (f)--(j) and (p)--(t) comprise the  time comparison.}
	\label{fig:outliers5000}
\end{figure}

\subsubsection{Perfect low-rank matrix completion}

As in \cite{cambier2016robust}, we first tested all the methods on a simple perfect matrix $M$ (without any outliers) of size $5000 \times 5000$ and rank 10. The results are shown in Figure \ref{fig:perfect_5000}. We can see that the choice of smoothing function does not have much effect on numerical performance. In terms of the number of iterations ((a)--(e)), our Algorithm \ref{alg:sm_NROP222} inherits the convergence of its sub-algorithm at least Q-superlinearly when trust regions or ARC are used. But the single iteration cost of trust regions and ARC is high; they are not efficient in terms of time. Specifically, the conjugate-gradient method employed in \cite{cambier2016robust} stagnates at lower precision. Overall, Barzilai-Borwein performed best in terms of time and accuracy.

\subsubsection{Low-rank matrix completion with outliers}

Given a $500 \times 500$ matrix for which we observed the entries uniformly at random with an oversampling $\rho$ of 5, we perturbed $5 \%$ of the observed entries by adding noise to them in order to create outliers. The added item was a random variable defined as $\mathcal{O}=\mathcal{S}_{\pm 1} \cdot \mathcal{N}(\mu_{N}, \sigma_{N}^{2})$ where $\mathcal{S}_{\pm 1}$ is a random variable with equal probability of being equal to $+1$ or $-1$, while $\mathcal{N}(\mu_{N}, \sigma_{N}^{2})$ is a Gaussian random variable of mean $\mu_{N}$ and variance $\sigma_{N}^{2}$.

Figure \ref{fig:outliers} reports the results of two $500 \times 500$ instances with outliers generated using $\mu_{N}=\sigma_{N}=0.1$ and $\mu_{N}=\sigma_{N}=1$. Again, we can see that the choice of smoothing function does not have much effect. In most cases, BFGS and trust regions were better than the other methods in terms of number of iterations, and BFGS was the fastest. Furthermore, the conjugate-gradient method employed in \cite{cambier2016robust} still stagnated at solutions with lower precision, approximately $ 10^{-6} $, while steepest descent, BFGS, and trust regions always obtained solutions with at least $ 10^{-8} $ precision.

Next, we ran the same experiment on larger $5000 \times 5000$ matrices, with $5 \%$ outliers. Figure \ref{fig:outliers5000} illustrates the results of these experiments, with $\mu_{N}=\sigma_{N}=0.1$ and $\mu_{N}=\sigma_{N}=1$. In most cases, trust regions still outperformed the other methods in terms of number of iterations, while BFGS performed poorly. Barzilai-Borwein and the conjugate-gradient method were almost as good in terms of time.

\section{Concluding remarks}

We examined the problem of finding a CP factorization of a given completely positive matrix and treated it as a nonsmooth Riemannian optimization problem. 
To this end, we studied a general framework of Riemannian smoothing for Riemannian optimization. 
The numerical experiments clarified that our method can compete with other efficient CP factorization methods, in particular on large-scale matrices.

Let us we summarize the relation of our approach to the existing CP factorization methods.
Groetzner and D{\"u}r \cite{groetzner2020factorization} and Chen et al. \cite{chen2020difference} proposed different methods to solve (\ref{FeasCP}).
Bo{\c{t}} and Nguyen \cite{boct2021factorization} tried to solve another model (\ref{bot}).
However, the methods they used do not belong to the Riemannian optimization techniques, but are rather Euclidean ones, since they treated the set $ \mathcal{O}(r) := \{ X \in \mathbb{R}^{r \times r} : X^{\top}X=I\}$ as a usual constraint in Euclidean space.
By comparison, we recognize the existence of manifolds, namely, the Stiefel manifold $ \mathcal{M}= \mathcal{O}(r) $, and use optimization techniques specific to them.
This change in perspective suggests the possibility of using the rich variety of Riemannian optimization techniques.
As the experiments in Section \ref{sec:ne} show, our Riemannian approach is faster and more reliable than the Euclidean methods.

In the future, we plan to extend Algorithm \ref{alg:sm_NROP222} to the case of general manifolds and, particularly, to quotient manifolds. 
This application is believed to be possible, although effort should be put into establishing analogous convergence results. 
In fact, convergence has been verified in a built-in example in Manopt 7.0 \cite{manopt}: \texttt{robust\_pca.m} computes a robust version of PCA on data and optimizes a nonsmooth function over a Grassmann manifold. 
The nonsmooth term consists of the $ l_2 $ norm, which is not squared, for robustness. 
In \texttt{robust\_pca.m}, Riemannian smoothing with a pseudo-Huber loss function is used in place of the $ l_2 $ norm.

As in the other numerical methods, there is no guarantee that Algorithm \ref{alg:sm_NROP222} will find a CP factorization for every $A \in \mathcal{C} \mathcal{P}^{n}$. 
It follows from Proposition \ref{prop:cp} that $A \in \mathcal{C} \mathcal{P}_{n}$ if and only if the global minimum of (\ref{OptCP}), say $t$, is such that $t \leqslant 0$. 
Since our methods only converge to a stationary point, Algorithm \ref{alg:sm_NROP222} provides us with a local minimizer at best. We are looking forward to finding a global minimizer of (\ref{OptCP}) in our future work.

\subsection*{Acknowledgments}

This work was supported by the Research Institute for Mathematical Sciences, an International Joint Usage/Research Center, at Kyoto University, JSPS KAKENHI Grant, number (B)19H02373, and JST SPRING Grant, number JPMJSP2124.  
The authors would like to sincerely thank the anonymous reviewers and the coordinating editor for their thoughtful and valuable comments which have significantly improved the paper.

\subsection*{Data availability}
The data that support the findings of this study are available from the corresponding author upon request.

\subsection*{Conflict of interest}
All authors declare that they have no conflicts of interest.

\end{document}